\documentclass[12pt]{amsart}

\usepackage{answers}
\usepackage{setspace}
\usepackage{enumitem}
\usepackage{multicol}
\usepackage{mathrsfs}
\usepackage[margin=1in]{geometry} 
\usepackage{amsmath,amsthm,amssymb}
\usepackage{listings}
\usepackage{xspace}
\usepackage{bbm}

\usepackage{amsfonts,amssymb,amsbsy}
\usepackage{latexsym}
\usepackage{color}
\usepackage{graphics} 
\usepackage{float}
\usepackage{subfigure}
\usepackage{epsfig} 
\usepackage{epstopdf}
\usepackage{caption}
\usepackage{longtable}
\usepackage{afterpage}
\usepackage{array,booktabs,enumitem}
\usepackage{algorithm}  
\usepackage{algorithmicx}  
\usepackage{algpseudocode}  
\usepackage{mathrsfs}
\usepackage{bm}
\usepackage{tikz}
\usepackage{hyperref}
\usepackage{cleveref}
\usepackage{multirow}
\newtheorem{theorem}{Theorem}[section]
\newtheorem{proposition}[theorem]{Proposition}
\newtheorem{lemma}[theorem]{Lemma}
\newtheorem{corollary}[theorem]{Corollary}
\theoremstyle{definition}
\newtheorem{definition}[theorem]{Definition}
\theoremstyle{remark}
\newtheorem{remark}[theorem]{Remark}

\newtheoremstyle{assumption}
{6pt}
{6pt}
{\rm}
{}
{\bfseries}
{}
{1pt}
{}
\theoremstyle{assumption}
\newtheorem{assumption}{Assumption} 

\usepackage[colorinlistoftodos,bordercolor=orange,backgroundcolor=orange!20,linecolor=orange,textsize=scriptsize]{todonotes}
\usepackage{frenchineq}
\newcommand{\veryshortarrow}[1][3pt]{\mathrel{%
		\hbox{\rule[\dimexpr\fontdimen22\textfont2-.2pt\relax]{#1}{.4pt}}%
		\mkern-4mu\hbox{\usefont{U}{lasy}{m}{n}\symbol{41}}}}

\makeatletter

\setbox0\hbox{$\xdef\scriptratio{\strip@pt\dimexpr
		\numexpr(\sf@size*65536)/\f@size sp}$}

\newcommand{\scriptveryshortarrow}[1][3pt]{{%
		\hbox{\rule[\scriptratio\dimexpr\fontdimen22\textfont2-.2pt\relax]
			{\scriptratio\dimexpr#1\relax}{\scriptratio\dimexpr.4pt\relax}}%
		\mkern-4mu\hbox{\let\f@size\sf@size\usefont{U}{lasy}{m}{n}\symbol{41}}}}

\makeatother

\newcommand{\fromsource}{_{\mathrm{s}\veryshortarrow}}

\newcommand{\sourceset}{\mathcal{K}_{\textrm{src}}}
\newcommand{\destset}{\mathcal{K}_{\textrm{dst}}}

\newcommand{\Far}{\text{\sc Far}\xspace}
\newcommand{\Accepted}{\text{\sc Accepted}\xspace }
\newcommand{\Narrow}{\text{\sc NarrowBand}\xspace } 

\newcommand{\Start}{\text{\sc Start}\xspace }

\newcommand{\lowerboundf}{\underline{f}}
\newcommand{\upperboundf}{\overline{f}}

\newcommand{\N}{\mathbb{N}}

\newcommand{\R}{\mathbb{R}}
\newcommand{\A}{{\mathcal A}}

\newcommand{\K}{{\mathcal K}}
\newcommand{\Cc}{{\mathcal C}}
\newcommand{\Pro}{\mathbb{P}}
\newcommand{\E}{\mathrm{E}}
\newcommand{\Var}{\mathrm{Var}}
\newcommand{\tp}{\mathrm{t}^+}

\newcommand{\Tra}{\mathrm{Tr}}

\newcommand{\Oc}{\mathcal{O}}
\newcommand{\Ocb}{\overline{\mathcal{O}}}

\newcommand{\dd}{\tilde{d}}

\newcommand{\subv}{\overline{v}}
\newcommand{\supv}{\underline{v}}

\newenvironment{skproof}{{\noindent\it Sketch of Proof.}}{\hfill $\square$\par}

\makeatletter

\makeatother

\usepackage{geometry}

\title[Convergence and Error Estimates of A Semi-lagrangian scheme]{Convergence and Error Estimates of A Semi-Lagrangian scheme for the Minimum Time Problem
}

\date{\today} 

\author{Marianne Akian}
\address[Marianne Akian]
{Inria and CMAP, \'Ecole polytechnique, IP Paris, CNRS}
\email{Marianne.Akian@inria.fr}
\author{Shanqing Liu$^{\, 1}$}
\address[Shanqing Liu]{Division of Applied Mathematics, Brown University}
\email{shanqing\_liu@brown.edu} 
\thanks{$(1)$ This work began when the second author was a PhD student at Inria and CMAP, Ecole polytechnique, CNRS, Institut Polytechnique de Paris}

\begin{document}

\begin{abstract}
	We consider a semi-Lagrangian scheme for solving the minimum time problem, with a given target, and the associated eikonal type  equation. 
	We first use a discrete time deterministic optimal control problem interpretation of the time discretization scheme, and show that the discrete time value function is semiconcave under regularity assumptions on the dynamics and the boundary of target set. We establish a convergence rate of order $1$ in terms of time step 
	based on this semiconcavity property. %
	Then, we use a discrete time stochastic optimal control interpretation of the full discretization scheme, and we establish a convergence rate of order $1$ in terms of both time and spatial steps %
	using certain interpolation operators, under further regularity assumptions. 
	We extend our convergence results to problems with particular state constraints. We apply our results to analyze the convergence rate and computational complexity of the fast-marching method.
	We also consider the multi-level fast-marching method recently introduced by the authors.
\end{abstract}

\maketitle

\section{Introduction}
\subsection{Motivation and Context} 
We consider a %
class of optimal control problems, the minimum time problem, with a given target set. To this problem is associated an eikonal equation, 
which is also a static first order Hamilton-Jacobi(HJ) Partial Differential Equation(PDE).  %
The value function %
is then characterized as the solution of the associated HJ equation in the viscosity sense (see for instance~\cite{flemingsoner}). Problems with state constraints can be addressed with the notion of constrained viscosity solution~\cite{soner1986optimal1}. 

One classical numerical method for approximating HJ equations is the \textit{semi-Lagrangian scheme}, as in \cite{falcone1987numerical,falcone2013semi}, which arises by applying the Bellman dynamic programming principle to the discrete time optimal control problem obtained after a Runge-Kutta time-discretization of the dynamics. 
For infinite horizon discounted problems, assuming strong comparison principle for the associated HJ equation and considering only time discretization, a convergence rate of order $1/2$ in terms of time step is established under mild conditions on the problems, and a convergence rate of order $1$ is established typically under a semiconcavity condition~\cite{dolcetta1984approximate}, or a bounded variational condition~\cite{falcone1987numerical}. 
For practical computation, a further discretization in the state space is needed, which leads to the full discretization semi-Lagrangian scheme. 
After such a space discretization (using a grid), the resulting system of equations can generally (when the discretization satisfies a monotonicity condition) be interpreted as the dynamic programming equation of a stochastic optimal control problem \cite{kushner2001numerical} with discrete time and state space. 
One can then solve the discretized equation by applying value iteration
until convergence. 
Each iteration consists in updating the value function at nodes of a given grid by solving the corresponding discrete HJ equation in these grid nodes. 
The convergence of the full discretization scheme is often obtained as both the time step and the ratio of space step over time step tend to $0$. 
Several further works are proposed intended to show the convergence and compute the convergence rate of this class of schemes, for instance the works of Bardi and Falcone~\cite{bardi1990approximation}, of Falcone and Ferretti~\cite{falcone1994discrete}, of Gr\"une~\cite{grune1997adaptive}, of Bokanowski, Megdich and Zidani~\cite{bokanowski2010convergence}, and the more recent works of ~Bokanowski, Gammoudi and Zidani\cite{bokanowski2022optimistic}, of Calzola, Carlini, Dupuis and Silva~\cite{calzola2023semi} and of de Frutos and Novo~\cite{de2023optimal}.

The value iteration algorithm solving the discretized HJ equation requires generally a number of iterations which depends on the mesh step. 
An interesting acceleration method is the fast-marching method, which was originally introduced in~\cite{tsitsiklis1995efficient} and~\cite{sethian1996fast} in the case of monotone and causal discretizations of the eikonal equation. 
The method takes advantage of the property that the evolution of the region behinds a ``propagation front" is monotonically non-decreasing, allowing one to focus only on the computation around the front at each iteration. 
Specifically, the value function is computed by visiting the grid nodes in a special order, which is chosen so that the value function is monotone non-decreasing in the direction of propagation. 
This is the so-called ``causality''. 
Owing to these properties, the fast-marching method is a ``single pass" method, and it solves the discretized equation exactly. 
Therefore, the convergence of fast marching method is equivalent to the convergence of the discretization.
The computational complexity of the fast-marching method is shown to be $O(K M \log(M))$ in terms of arithmetic operations, where $M$ is the number of grid nodes and $K$ is a constant that depends on the size of discretization neighborhoods. In particular, considering a $d$-dimensional grid with mesh step $h$, the computational complexity is $\widetilde{O}((\frac{1}{h})^d)$, where $\widetilde{O}$ ignores the logarithm factors. 
In first works 
on the fast marching method, by Sethian and Vladimirsky~\cite{sethian2003ordered}, Cristiani and Falcone ~\cite{cristiani2007fast}, Carlini, Falcone, Forcadel and Monneau~\cite{carlini2008convergence}, Carlini, Falcone and Hoch~\cite{carlini2011generalized} and Mirebeau~\cite{Mir14a}, the authors proved the convergence of their methods when the mesh step $h$ goes to $0$ without an explicit convergence rate. 
More recently, Shum, Morris, and Khajepour~\cite{shum2016},
and Mirebeau~\cite{mirebeau2019riemannian}  established a  convergence rate of order $1/2$ in $h$, meaning that the error is  $O(h^{\frac{1}{2}})$. Though, most of numerical experiments in above works reveal an actual convergence rate 
of order $1$. 
One of the objectives of the present paper is to establish sufficient conditions for achieving a convergence rate of order $1$.

The fast-marching method still suffers from the ``curse of dimensionality". 
One possible way to overcome the curse of dimensionality is to focus on finding one (or several) particular optimal trajectories. 
In 
\cite{akian2023multi}, we introduced a multi-level fast-marching method, which 
focuses on the neighborhood of optimal trajectories. 
We also obtained a theoretical computational complexity bound for this method,
under assumptions on the convergence rate of the discretization (without or with a particular state constraint) and on the stiffness of the value function.
In particular, the best complexity bound is of the same order as for one-dimensional problems and is obtained when the convergence rate is $1$ and the stiffness is high. One of the aim of the present paper is to give sufficient conditions on the problem to achieve such a 
convergence rate. 

\subsection{Contribution}

We consider an eikonal equation arising from the minimum time problem of reaching a target set $\K$, and 
a particular semi-Lagrangian scheme 
in which the time step depends on the state so that the discrete time problem becomes a minimal cost problem with constant speed.
Considering first the time discretization (only), we show in~\Cref{semiconcave-value} that, under certain regularity assumptions on the dynamics and on the boundary of target set $\K$, the discrete time value function is semiconcave. 
This regularity of the discrete time value function leads to a convergence rate of order $1$ in the time step of the semi-Lagrangian scheme, which is stated in~\Cref{rate_main}. 

Then, we consider a full discretization scheme that involves discretizing the state space using a mesh grid, and is derived by further applying certain interpolation operators. In this scheme, we take comparable values of space and time steps, which implies that it does not satisfy the typical assumptions for convergence in the existing literature.
Using the regularity obtained for the discrete time value function and assuming additional regularity conditions on the dynamics and on the boundary of the target set $\K$, we prove that the error between the solution of the fully discretized equation and the solution of the semi-discretized equation is of order $1$ in terms of the mesh step. 
In particular, in~\Cref{rate_wh-v} we show that the error bound holds in one direction, by the contraction property of the update operator. In~\Cref{rate_v-wh}, we show that the error bound holds in the other dirertion, using a controlled Markov problem interpretation of the full discretization scheme.  
This result yields a convergence rate of order $1$ for the full discretization scheme, in terms of both time step and mesh step. 

As an application of the above results, we 
obtain a convergence rate of order 1 for the fast-marching methods
used in the works of~\cite{sethian2003ordered,cristiani2007fast,shum2016, mirebeau2019riemannian}. Therefore they match the numerical experiments of the aforementioned works.

We also extend our convergence results to problems involving certain types of state constraints and show in that case that the constants of the errors are independent of the state constraint, for points which are far from the boundary.

This paper is organized as follows: In~\Cref{sec-pre}, we provide preliminary results on the HJ equation and the minimum time optimal control problem. In~\Cref{sec-semilagrangian}, we consider a discrete time optimal control problem associated to the semi-Lagrangian time discretization scheme. A convergence rate of order $1$ is obtained using the semiconcavity property of the value function of the discrete time problem, which itself is obtained under semiconcave assumptions on the dynamics and target set. In~\Cref{sec-fulldisfm}, we consider a full discretization scheme. We represent the solution of the fully discretized equation as the value function of a stochastic control problem (or Markov Decision problem). We then show the convergence rate for particular interpolation operators. As an application we analyze the computational complexity of a fast-marching method with update operator derived from the full discretization scheme. In~\Cref{sec-mlfm}, we extend convergence results to a particular type of state constraints, and then apply the results to analyze the computational complexity of the multi-level fast-marching method.

\paragraph{\bf Acknowledgement:} The authors thank St\'ephane Gaubert for his useful and detailed  suggestions.

\section{Preliminaries}\label{sec-pre}
\subsection{The Eikonal Equation}
Let $\K$ be a compact subset of $\R^{d}$. Let $S_{1}$ be the unit Euclidean sphere in $\R^{d}$, i.e., $S_{1} = \{ x \in \R^{d}, \| x\| =1  \} $ where $\|\cdot\|$ denotes the Euclidean norm.
We consider an eikonal equation of the form:
\begin{equation}\label{eikonal_eq}
	\left\{
	\begin{aligned}
		&-( \min_{\alpha \in S_1} \{ (\nabla T(x) \cdot \alpha) f(x,\alpha) \} + 1 ) = 0, \quad &x \in \R^d \setminus \K \ , \\
		&T(x) = 0, \quad & x \in \partial \K \ ,
	\end{aligned}
	\right.
\end{equation} 
where $f$ is the speed function, and we assume the following basic regularity properties:
\begin{assumption} \label{assp1}$ $
	\begin{enumerate}
		\item $f : \R^d \times S_1 \mapsto \R_{>0}$  is continuous.	
		\item $f$ is bounded, i.e., $\exists M_f >0$ s.t. $|f(x,\alpha) | \leq M_f$, $\forall x \in \R^d, \forall \alpha \in S_1$.
		\item There exists constants $L_{f}, L_{f,\alpha}>0$ such that $ |f(x,\alpha) - f(x^{'},\alpha)| \leq L_{f}| x - x^{'} |, \forall \alpha \in S_1, \forall x, x^{'}\in \R^d$ and  $| f(x,\alpha) - f(x,\alpha^{'})| \leq L_{f,\alpha}|\alpha - \alpha^{'}|, \forall x \in \R^d, \forall \alpha, \alpha^{'} \in S_1$.
	\end{enumerate}
\end{assumption}
The function $T: \R^d \to \R$ represents the minimum time required for a point $x \in \R^d \setminus \K$ to reach $\K$, while traveling with a state-dependent speed given by the function $f$. Such an eikonal equation is typically associated with the front propagation problem,  which involves the evolution of the boundary of a domain, denoted by $\Gamma_t$, as described by $T$.
In particular, the boundary of the domain $\Omega_t$ can be defined as $\Gamma_t = \partial \Omega_t = \{x\in\mathbb{R}^d \mid T(x) = t\}$, where the initial condition is $\Omega_0 = \mathcal{K}$. Notice that, given~\Cref{assp1}, we have $\Omega_t \subset \Omega_{t+s}$ for all $t, s > 0$.

\subsection{Minimum Time Optimal Control Problem}\label{subsec-minimumtime} The above equation~\eqref{eikonal_eq} also arises from the minimum time problem. A basic technique in the study of this problem (see for instance \cite{vladimirsky2006static}, \cite{bardi1989boundary}, \cite[Chapter-IV]{bardi2008optimal}) is the change of variable:
\begin{equation} \label{changevar}
	v(x) = 1 - e^{-T(x)} \enspace,
\end{equation}
which was first used by Kruzkov~\cite{kruvzkov1975generalized}. By doing so, $v(x)$ is automatically bounded and Lipschitz continuous. Once $v$ is computed, one can directly compute the value of $T(x)$ by $T(x) = - \log(1 - v(x))$.  

Let us consider a control problem associated to the dynamical system:
\begin{equation}
	\label{dynmsys}
	\left\{
	\begin{aligned}
		&\dot{y} (t) = f(y(t), \alpha(t)) \alpha(t), \ \forall t \geq 0 \ ,\\
		&y(0) = x \ ,
	\end{aligned}
	\right.
\end{equation}
where $\alpha \in \mathcal{A} := \{ \alpha : \R_{\geq0} \to S_{1}, \ \alpha(\cdot) \text{ is measurable} \}  $. Every $\alpha \in \A$ is then the unit vector determining the direction of motion.
We denote by $y_\alpha(x;t)$ the solution of the dynamical system~\eqref{dynmsys} for a given $\alpha ( \cdot)$, and define a discounted cost functional by:
\begin{equation}\label{cost}
	J(\alpha (\cdot),x) = \inf \left\{ \int_{0}^{\tau} e^{-t} dt \mid \tau\geq 0 , \ y_{\alpha}(x;\tau) \in \K \right\} \enspace ,
\end{equation}
for $\alpha \in \A$. Then, the value function $v$ of the control problem given by
\begin{equation}\label{value}
	v(x) = \inf_{\alpha \in \A} J(\alpha(\cdot),x)
\end{equation}
coincides with $v$ in~\eqref{changevar}. Let now 
\begin{equation}
	\label{defF}
	F(x,r,p) = -\min_{\alpha \in \A} \{ p \cdot f(x,\alpha)\alpha + 1 - r \}
	\enspace .\end{equation}
This Hamiltonian corresponds to the  control problem {\rm (\ref{dynmsys},\ref{cost},\ref{value})}. Then, under~\Cref{assp1}, restricted to $\overline{\R^d \setminus \K}$, $v$ is the unique viscosity solution of the following Hamilton-Jacobi-Bellman equation (see for instance \cite{flemingsoner}):
\begin{equation}\label{HJ}
	\left\{
	\begin{aligned}
		& F(x, v(x), Dv(x)) = 0, \enspace & x \in \R^d \setminus \K  \ , \\
		&v(x) = 0, \enspace & x \in \partial \K \ .
	\end{aligned}
	\right.
\end{equation}
Let $\Oc$ be an open subset of $\R^d$, we also briefly recall the definition of viscosity solution of 
\begin{equation}\label{eq_def}
	F(x, u, D u) =0, \ \text{ in } \Oc \ , 
\end{equation}
\begin{definition}\label{def_viscosity}
	Let $u: \Ocb \to \R$ be continuous.
	\begin{enumerate}
		\item\label{vis_sub} The function $u$ is a viscosity subsolution of~\eqref{eq_def} if for every test function $\psi \in \Cc^1(\Ocb)$ and all local maximum points $x_0 \in \Ocb$ of the function $u - \psi$, we have $F(x_0, u(x_0), D\psi(x_0))\leq 0$. 
		\item\label{vis_super}  The function $u$ is a viscosity supersolution of~\eqref{eq_def} if for every test function $\psi \in \Cc^1(\Ocb)$ and all local minimum points $x_0 \in \Ocb$ of the function $u - \psi$, we have $F(x_0, u(x_0), D\psi(x_0))\geq 0$. 
		\item The function $u$ is a viscosity solution of~\eqref{eq_def} if and only if it is a viscosity subsolution and supersolution of~\eqref{eq_def}. 
	\end{enumerate}
	
\end{definition}

In the following, we will focus on the numerical approximation of system~\eqref{HJ}.

\section{The Semi-Lagrangian Scheme: Convexity Properties And Convergence Analysis.}\label{sec-semilagrangian}

In this section, we propose a semi-Lagrangian type discretization (in time) for the system~\eqref{HJ}. 
We analyze the convergence of the solution of discretized equation to the viscosity solution of~\eqref{HJ}, and we give the convergence rate. 
A convergence rate of oder $1$ in terms of time step is also established by exploiting the semiconcavity property of the solution of discretized equation, which is associated with the value function of a discrete time optimal control problem.

\subsection{The Semi-lagrangian Scheme for the Minimum Time Problem}
Consider the following semi-Lagrangian type discretization of the system~\eqref{HJ}:
\begin{equation}\label{discre-hjb}
	\left\{
	\begin{aligned}
		&v^h(x) = \min_{\alpha \in S_{1}} \left\{ \big(1-\frac{h}{f(x,\alpha)}\big) v^{h}(x + h \alpha)  + \frac{h}{f(x,\alpha)}   \right\} , \ & x \in \R^d \setminus \K \  ,\\
		&v^h(x) = 0, \ &x \in  \K \ ,
	\end{aligned}
	\right.
\end{equation}
where $h>0$ is a fixed parameter. 
This is a direct discretization in time of system~\eqref{HJ}, in which the time step is $h/f(x,\alpha)$, depending on state and control. 
The convergence of similar discretization systems, for which the time step is constant, has been studied for instance in~\cite{bardi1990approximation,falcone1994level}, and the method of proof can be straightforwardly
adapted to our system~\eqref{discre-hjb}, keeping
in mind that~\eqref{HJ} has a unique viscosity solution $v$.
\begin{proposition}\label{convergence}
	Let us denote 
	$ \supv(x) = \liminf \limits_{h \to 0, \  y \to x } v^h (y)$, and  $\subv(x) =  \limsup \limits_{h \to 0, \ y \to x} v^h(y)$. Make~\Cref{assp1}, then $\subv$ ($\supv$ resp.) is a viscosity subsolution (supersolution resp.) of \eqref{HJ}.
	Thus, $\{v^h\}$ converge uniformly to $v$ on any compact subset of $\R^d$ as $h \to 0$. \qed
\end{proposition}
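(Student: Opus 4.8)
The strategy is the classical Barles--Souganidis half-relaxed limits argument, combined with the strong comparison principle that is available for \eqref{HJ} since it has a unique viscosity solution. First I would record the structural properties of the scheme \eqref{discre-hjb}: writing it as $v^h(x) = \Sigma(h, x, v^h)$, the operator $\Sigma$ is (a) \emph{monotone} in the sense that if $u \le w$ pointwise then $\Sigma(h,x,u) \le \Sigma(h,x,w)$, because the coefficient $1 - h/f(x,\alpha)$ is nonnegative for $h$ small enough (using the lower bound $f > 0$, which together with continuity and compactness of $S_1$ on any bounded set gives $f \ge m > 0$ locally — here one restricts attention to a fixed compact neighbourhood, as the conclusion is uniform convergence on compacts); (b) \emph{consistent}, i.e.\ for any smooth $\psi$, a Taylor expansion of $\psi(x+h\alpha) = \psi(x) + h \nabla\psi(x)\cdot\alpha + O(h^2)$ inside the $\min_{\alpha}$ shows that
\[
\frac{\psi(x) - \Sigma(h,x,\psi)}{h} \;\longrightarrow\; -\Bigl(\min_{\alpha\in S_1}\{\nabla\psi(x)\cdot f(x,\alpha)\alpha\} + 1\Bigr) = F(x,\psi(x),D\psi(x))
\]
as $h \to 0$ and the argument tends to $x$ (the $-v$ term in $F$ drops out here because the scheme for the minimum-time equation has the discount folded into the $1 - h/f$ factor; more precisely one checks consistency directly against the form in \eqref{discre-hjb}); and (c) the $v^h$ are uniformly bounded, $0 \le v^h \le 1$, which follows from the fixed-point structure and the boundary condition, so the half-relaxed limits $\subv, \supv$ are well defined and finite, with $\supv \le \subv$ trivially and $\supv = \subv = 0$ on $\partial\K$.

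Next I would show that $\subv$ is a viscosity subsolution of \eqref{HJ}. Take $\psi \in \mathcal{C}^1$ and a strict local maximum point $x_0 \in \overline{\R^d\setminus\K}$ of $\subv - \psi$. By the standard lemma on half-relaxed limits there exist $h_n \to 0$ and points $x_n \to x_0$ that are local maxima of $v^{h_n} - \psi$ with $v^{h_n}(x_n) \to \subv(x_0)$. At such a point, local maximality gives $v^{h_n}(x_n) - \psi(x_n) \ge v^{h_n}(y) - \psi(y)$ for $y$ near $x_n$, in particular for $y = x_n + h_n\alpha$ for every $\alpha$; plugging this into \eqref{discre-hjb} (and using the monotonicity/nonnegativity of the coefficient) yields $v^{h_n}(x_n) \le \Sigma(h_n, x_n, \psi)$ up to controlling the difference, hence $\psi(x_n) - \Sigma(h_n,x_n,\psi) \le \psi(x_n) - v^{h_n}(x_n)$; dividing by $h_n$, letting $n\to\infty$, and invoking consistency gives $F(x_0, \subv(x_0), D\psi(x_0)) \le 0$. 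The supersolution property of $\supv$ is symmetric, using local minima and the reverse inequality. One must also handle test points on $\partial\K$ and points of $\K$ itself, where the boundary condition $v^h = 0$ makes the verification immediate.

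Finally, since $\subv$ is a bounded u.s.c.\ subsolution and $\supv$ a bounded l.s.c.\ supersolution of \eqref{HJ} with $\subv = \supv = 0$ on $\partial\K$, the comparison principle for \eqref{HJ} (which holds because \eqref{HJ} has a unique continuous viscosity solution $v$, per the discussion preceding the statement, citing \cite{flemingsoner,bardi2008optimal}) gives $\subv \le \supv$ on $\overline{\R^d\setminus\K}$; combined with $\supv \le \subv$ this forces $\subv = \supv = v$. Equality of the upper and lower relaxed limits to a continuous function is exactly local uniform convergence $v^h \to v$, which is the last assertion. The main obstacle — and the only place where real care is needed rather than bookkeeping — is the consistency check: one must verify that the particular state-dependent time step $h/f(x,\alpha)$ inside the $\min$ reproduces $F$ in the limit, and that the local-maximum comparison in the subsolution step is legitimate despite the coefficient $1 - h/f(x,\alpha)$ depending on $\alpha$ (this is why the reduction to a compact set with $f$ bounded below, so that the coefficient is uniformly in $[0,1]$ for small $h$, is essential). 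The rest is the routine adaptation of the arguments in \cite{bardi1990approximation,falcone1994level} alluded to in the text.
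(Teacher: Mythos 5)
Your proposal is correct and follows exactly the route the paper itself intends: the paper gives no proof of \Cref{convergence}, stating only that the Barles--Souganidis-type half-relaxed-limit argument of \cite{bardi1990approximation,falcone1994level} adapts to \eqref{discre-hjb}, and your monotonicity/consistency/stability plus comparison-principle outline is precisely that adaptation. One correction to your consistency display: the actual limit of $(\psi(x)-\Sigma(h,x,\psi))/h$ is $-\min_{\alpha\in S_1}\bigl\{\nabla\psi(x)\cdot\alpha+(1-\psi(x))/f(x,\alpha)\bigr\}$, which is not equal to $F(x,\psi(x),D\psi(x))=-\min_{\alpha}\{\nabla\psi(x)\cdot f(x,\alpha)\alpha+1-\psi(x)\}$ (and the $1-r$ term does not drop out --- it arises from the $h/f$ cost term), but the two differ only by the positive factor $f(x,\alpha)$ inside the minimum and therefore always have the same sign, which is all the viscosity sub- and supersolution inequalities require.
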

In the following, we denote $\upperboundf$ and $\lowerboundf$ the upper and lower bounds for $f$, respectively, i.e., 
\[0 < \lowerboundf \leq f(x,\alpha) \leq \upperboundf < \infty, \text{ for all } x \in \R^d \text{ and } \alpha \in \A \ . \]
Then, we have the following result for the convergence rate.
\begin{proposition}\label{converg}
	Suppose that~\Cref{assp1} holds. 
	There exists a constant $C_{1/2}>0$ depending on $L_f, L_v, \upperboundf$  such that, for every $0<h<\frac{1}{\upperboundf} $:
	\begin{equation}\label{rate_1}
		\|v^h - v \|_\infty \leq C_{1/2} h^{\frac{1}{2}} \ .
	\end{equation}
\end{proposition}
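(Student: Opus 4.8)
The plan is to establish the $O(h^{1/2})$ rate by a doubling-of-variables / Crandall--Lions type argument, adapted to the fact that $v$ is the exact viscosity solution of \eqref{HJ} and $v^h$ solves the discrete scheme \eqref{discre-hjb}. First I would recall the two facts we may use freely: $v$ is bounded and Lipschitz with constant $L_v$ (from the change of variables \eqref{changevar} and standard estimates on the minimum time function), and the discrete scheme is a contraction in sup-norm with a built-in discount factor $1 - h/f(x,\alpha) \le 1 - h/\upperboundf$, so $v^h$ exists, is unique, and is itself bounded (in $[0,1]$) and, by a routine argument, Lipschitz with a constant comparable to $L_v$ uniformly in $h$. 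It is convenient to work on the whole of $\R^d$, setting $v = v^h = 0$ on $\K$, so that both are globally defined bounded Lipschitz functions.

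The core step is to bound $\sup_x (v^h(x) - v(x))$ and $\sup_x (v(x) - v^h(x))$ separately. For the first, introduce the auxiliary function
\[
\Phi(x,y) = v^h(x) - v(y) - \frac{|x-y|^2}{2\varepsilon},
\]
and let $(\bar x, \bar y)$ be a maximum point (existence follows from boundedness and continuity, after localizing or using that $v^h - v \to 0$ at infinity is not needed — one restricts attention to a fixed compact set containing $\K$ together with a neighborhood, as in \Cref{convergence}). From the penalization one gets $|\bar x - \bar y| \le C\varepsilon$ (using the Lipschitz bound on $v$ or $v^h$), hence $|\bar x-\bar y|^2/\varepsilon \le C\varepsilon$. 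If $\bar x \in \K$ the difference is controlled directly; otherwise, $p := (\bar x - \bar y)/\varepsilon$ is a viscosity subdifferential element for $v$ at $\bar y$, so the viscosity supersolution inequality for $v$ gives $\min_{\alpha}\{ p\cdot f(\bar y,\alpha)\alpha + 1\} \ge v(\bar y)$, i.e. $\min_\alpha\{\dots\}$ is bounded below. On the discrete side, one plugs the test point $\bar x + h\alpha$ into \eqref{discre-hjb}, Taylor-expands $v(\bar y + h\alpha)$ — here one must be careful since $v$ is only Lipschitz, so instead one uses the comparison $v^h(\bar x) \le (1-\tfrac{h}{f})v^h(\bar x + h\alpha) + \tfrac{h}{f}$ together with the fact that $\Phi(\bar x,\bar y) \ge \Phi(\bar x + h\alpha,\, \bar y + h\alpha)$, which yields $v^h(\bar x) - v(\bar y) \le v^h(\bar x+h\alpha) - v(\bar y+h\alpha)$ after absorbing the (cancelling) quadratic terms. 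Combining, choosing $\alpha$ to be (near-)optimal in the discrete minimum, and using the Lipschitz regularity of $f$ (constant $L_f$) to pass from $f(\bar x,\alpha)$ to $f(\bar y,\alpha)$ at cost $O(L_f|\bar x-\bar y|) = O(L_f\varepsilon)$, one arrives at an inequality of the shape
\[
\sup_x (v^h(x) - v(x)) \le C\Big( \varepsilon + \frac{h}{\varepsilon} + h \Big),
\]
and optimizing $\varepsilon = \sqrt{h}$ gives the $O(h^{1/2})$ bound. The reverse inequality is obtained symmetrically, now using that $v$ is a viscosity subsolution and running the discrete dynamic programming inequality in the other direction (every $\alpha$ gives an upper bound for $v^h$, so one picks $\alpha$ optimal for $v$'s subsolution inequality); the discount factor $1-h/f \le 1 - h/\upperboundf$ is what prevents the accumulated error from blowing up, contributing the dependence of $C_{1/2}$ on $\upperboundf$.

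The step I expect to be the main obstacle is handling the discrete increment $v(\bar y + h\alpha)$ (or $v^h(\bar x + h\alpha)$) cleanly: because $v$ is merely Lipschitz rather than $C^1$, one cannot Taylor-expand and must instead route everything through the penalized function $\Phi$ and the doubling inequality $\Phi(\bar x,\bar y) \ge \Phi(\bar x+h\alpha,\bar y+h\alpha)$, making sure the error terms that survive are exactly $O(\varepsilon)$, $O(h/\varepsilon)$ and $O(h)$ and no worse — in particular checking that the $h^2$ terms from expanding $|(\bar x+h\alpha)-(\bar y+h\alpha)|^2$ cancel identically (they do, since the shift is the same on both coordinates) and that the only genuinely new error is the $O(L_f h |\bar x - \bar y|/\varepsilon) \le O(L_f h)$ term coming from the state-dependence of the time step $h/f(x,\alpha)$. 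Boundary effects near $\partial\K$ are dealt with by the usual device of noting $v^h = v = 0$ there and using the Lipschitz bounds to estimate $v^h(x)-v(x)$ by the distance to $\K$ whenever $\bar x$ or $\bar y$ lands in $\K$.
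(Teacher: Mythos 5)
Your overall strategy --- doubling of variables with a quadratic penalization, the viscosity supersolution inequality for $v$ at $\bar y$, the dynamic programming inequality for $v^h$ at $\bar x$, and optimization over $\varepsilon$ --- is the same as the paper's (which uses the equivalent normalization $\theta_\varepsilon(x-y)=-|x-y|^2/\varepsilon^2$ and $\varepsilon=h^{1/4}$). But the key combination step, as you describe it, does not close. You propose to shift \emph{both} coordinates, using $\Phi(\bar x,\bar y)\ge\Phi(\bar x+h\alpha,\bar y+h\alpha)$ precisely so that ``the quadratic terms cancel identically''. Two problems. First, this inequality reads $v^h(\bar x+h\alpha)-v^h(\bar x)\le v(\bar y+h\alpha)-v(\bar y)$ (you state it with the inequality reversed), so after inserting the scheme inequality you are left having to estimate the increment $v(\bar y+h\alpha)-v(\bar y)$ of the merely Lipschitz function $v$ in terms of $p\cdot h\alpha$ --- exactly the Taylor expansion you correctly identify as unavailable. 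The double shift does not remove this obstacle; it relocates it from $v^h$ to $v$. Second, once the quadratic terms cancel, the gradient $p=(\bar x-\bar y)/\varepsilon$ never enters the computation, so the supersolution inequality (which is a statement about $p$) cannot be combined with anything, and the $h/\varepsilon$ term in your claimed bound $C(\varepsilon+h/\varepsilon+h)$ has no source.

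The correct move --- and what the paper does in \eqref{upboundvha} --- is to shift only the first coordinate: $\Phi(\bar x+h\alpha^*,\bar y)\le\Phi(\bar x,\bar y)$ gives $v^h(\bar x+h\alpha^*)\le v^h(\bar x)+p\cdot h\alpha^*+h^2/(2\varepsilon)$, so the only function expanded at a shifted point is the smooth penalization; neither $v$ nor $v^h$ is ever Taylor-expanded. This single-variable shift is what produces both the term $p\cdot h\alpha^*$ that pairs with the supersolution inequality at $\bar y$ (after paying $O(L_f|\bar x-\bar y|\,|p|)=O(L_fL_v^2\varepsilon)$ to replace $f(\bar y,\alpha^*)$ by $f(\bar x,\alpha^*)$) and the remainder $h^2/(2\varepsilon)$ which, after division by the effective time step $h/f(\bar x,\alpha^*)$, becomes the $O(\upperboundf h/\varepsilon)$ term you need. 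With that correction (and fixing the sign of the supersolution inequality, which for $F$ as in \eqref{defF} reads $\min_{\alpha}\{p\cdot f(\bar y,\alpha)\alpha+1\}\le v(\bar y)$), your sketch becomes the paper's proof.
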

\begin{proof}
	The proof for \Cref{converg} is a slight modification of the original method in~\cite{Crandall1984TwoAO}. Therefore, we will only provide a brief sketch of the proof here, with the purpose of facilitating further analysis. 
	
	Let us denote by $\Omega = \R^d \setminus \K$, 
	and define the following series of auxiliary functions.
	For every $1>\varepsilon>0$, for every $x \in \Omega ,$ we set $ \theta_{\varepsilon}(x) = - |\frac{x}{\varepsilon}|^2.$
	For every $0<h<\frac{1}{\upperboundf}$, for every $(x,y) \in \Omega \times \Omega,$ we set
	$\varphi(x,y) = v^{h}(x) - v(y) + \theta_{\varepsilon}(x-y) .$ 
	As both $v^h$ and $v$ are bounded, for every $\zeta > 0$, there exists a point $(x_1,y_1) \in \Omega \times \Omega$ which is an approximate maximizer
	of $\varphi$ up to a margin $\zeta$, i.e.,
	\[\varphi(x_1, y_1) > \sup_{(x,y) \in \Omega \times \Omega} (\varphi (x,y) - \zeta) \ .\]
	Let us choose a function $\xi \in C_0^{\infty}(\Omega \times \Omega)$, such that
	$\xi(x_1,y_1) = 1$, and $\xi \in [0,1], \ |D\xi| \leq 1 .$
	For every $1 > \zeta >0$, for every $(x,y) \in \Omega \times \Omega$, let 
	$\psi(x,y) = \varphi(x,y) + \zeta \xi(x,y).$ 
	Let $(x_0,y_0)$ be the point where $\psi$ reaches its maximum, i.e, 
	\begin{equation}\label{max_psi}
		\psi(x_0,y_0) \geq \psi(x,y),\ \text{for all } (x,y) \in \Omega \times \Omega \ . 
	\end{equation}
	Then, automatically $y \to -\psi(x_0,y) = v(y) - ( v^h(x_0)+ \theta_{\varepsilon}(x_0-y)+\zeta \xi (x_0,y)) $ reaches its minimum at $y_0$. By definition of viscosity solution, letting $y \to (v\fromsource^h(x_0) + \theta_{\varepsilon}(x_0-y) + \zeta \xi(x_0,y))$ be a test function, we have:
	\begin{equation}\label{2vhy}
		v(y_0) -  ((D\theta_{\varepsilon}(x_0-y_0) \cdot \alpha^{*}- \zeta D_y\xi(x_0,y_0))\cdot \alpha^{*}) f(y_0,\alpha^{*}) - 1 \geq 0 \enspace,
	\end{equation}
	for some $\alpha^{*} \in S_1$. Since $v^h$ is the solution of system~\eqref{discre-hjb}, we have
	\begin{equation}\label{upbondvh}
		v^h(x_0) \leq \left\{ (1-\frac{1}{f(x_0,\alpha^*)}) v^h(x_0+h \alpha^*) + \frac{h}{f(x_0,\alpha^*)} \right\} \ .
	\end{equation}
	Take $x = x_0 + h \alpha^*, y= y_0$ in~\eqref{max_psi}, we get
	\begin{equation}\label{upboundvha} 
		v^h(x_0 + h \alpha^*) \leq v^h(x_0) + (D \theta_\varepsilon (x_0 - y_0) \cdot \alpha^*)h + \frac{2}{\varepsilon^2} \alpha^* h^2 + \zeta \alpha^* h \ .
	\end{equation}
	Combining~\eqref{upbondvh} and~\eqref{upboundvha}, we get
	\begin{equation}\label{first_approx}
		v^h(x_0) \leq (1 - \frac{h}{f(x_0,\alpha^*)}) ( (D \theta_\varepsilon(x_0-y_0)  \cdot \alpha^* ) + \frac{2}{\varepsilon^2} \alpha^* h + \zeta \alpha^*)f(x_0,\alpha^*) + 1 \ .  
	\end{equation}
	Combining~\eqref{2vhy} and~\eqref{first_approx}, we have
	\begin{equation}\label{second_approx}
		v^h(x_0) - v(y_0) \leq \frac{2 L_f |x_0 -y_0 |^2 }{\varepsilon^2} + \frac{2 |x_0 - y_0 |h}{\varepsilon^2} + \frac{2 \upperboundf h}{\varepsilon^2} + 2 \zeta \upperboundf \ .
	\end{equation}
	Let us choose $x = y = x_0$ in~\eqref{max_psi}, then we obtain 
	\begin{equation}\label{bound_x0y0}
		| x_0 -y_0 | \leq (L_v + \zeta) \varepsilon^2 \ ,
	\end{equation}
	where $L_v$ is the Lipschitz constant for $v$. Substituting~\eqref{bound_x0y0} into~\eqref{second_approx}, we have
	\begin{equation}\label{third_approx}
		v^h(x_0) - v(y_0) \leq 2L_f(L_v+\zeta)^2\varepsilon^2+2(L_v+\zeta)h+\frac{2 \upperboundf h }{\varepsilon^2} + 2 \zeta \upperboundf \ .
	\end{equation}
	Take $\varepsilon = h^{\frac{1}{4}}$, we get
	\begin{equation}\label{4_approx}
		v^h(x_0) - v(y_0) \leq (2L_f(L_v+\zeta)^2 +2 \upperboundf) h^{\frac{1}{2}} + 2(L_v+\zeta)h + 2 \zeta \upperboundf \ .
	\end{equation}
	Let us now choose $x = y$ in~\eqref{max_psi}, we obtain that 
	\begin{equation}\label{5_approx}
		v^h(x) - v(x) \leq v^h(x_0) - v(y_0) + \zeta ( \xi(x_0,y_0) - \xi (x,x) ) - \frac{|x_0 - y_0 |^2}{ \varepsilon^2} \ .
	\end{equation}
	Thus, combining~\eqref{4_approx} and~\eqref{5_approx}, and take $\zeta \to 0$, we obtain that
	\begin{equation}\label{final_approx_one}
		v^h(x) - v(x) \leq ( 2 L_f L_v^2 + 2 \upperboundf  ) h^{\frac{1}{2}} \ .
	\end{equation}
	To show $v(x) - v^h(x) \leq ( 2 L_f L_v^2 + 2 \upperboundf  ) h^{\frac{1}{2}} $, it is enough to take $\varphi(x,y) = v(x) - v^h(y) + \theta_\varepsilon(x-y)$. We conclude the estimate in~\Cref{converg} with $C_{1/2} = 2 L_f L_v^2 + 2 \upperboundf$.
\end{proof}

\subsection{Discrete Time Control Problem and Its Value Function}\label{subsec_discre_control}

Let us represent the solution of the discretized equation~\eqref{discre-hjb} as the value function of a discretized version of the control problem~{\rm (\ref{dynmsys},\ref{cost},\ref{value})}. 
Consider the following discrete dynamical system,
\begin{equation}\label{discre_dyn}
	\left\{
	\begin{aligned}
		&y^h(k+1) = y^h(k) + h \alpha_k, \  \forall k = 0,1,2,\dots \ ,\\
		&y^h(0) = x \ ,
	\end{aligned}
	\right.
\end{equation}
where $\alpha_k \in S_1 $, for every $k = 0,1,2,\dots$ . Let us simply denote $\alpha^h$ the sequence of controls $\{ \alpha_k \}_{k=0,1,2,\dots}$, and denote $y^h_{\alpha^h}(x;k)$, $k = 0,1,2,\dots$, the solution of the above system \eqref{discre_dyn} with a given sequence of controls $\alpha^h$. Moreover, let
\begin{equation}\label{dis_step}
	N (x,\alpha^h) = \inf \{ N \in \mathbb{N}_+ \mid y^h_{\alpha^h}(x;N) \in \K \}\ .
\end{equation}
Consider the following discrete cost functional:
\begin{equation}\label{dis_cost}
	J^h(\alpha^h,x) =  \sum_{k=0}^{N(x,\alpha^h)} \Big( \frac{h}{f(y^h_{\alpha^h}(x;k),\alpha_k)} \big( \prod_{l=0}^{k-1}(1-\frac{h}{f(y^h_{\alpha^h}(x;l),\alpha_l)})  \big) \Big) \ .
\end{equation}
The associated value function is given by
\begin{equation}\label{value_discret}
	v^h(x) = \inf_{\alpha^h \in \A^h} J^h(\alpha^h,x) \ ,
\end{equation}
where $\A^h$ is a subset of $\A$ containing the controls which take constant values in the interval $[k, k+1[$, for every $k = 0,1,2,\dots ,$ i.e., 
\begin{equation}	\A^h = \{ \{ \alpha_k \}_{k\geq 0} \mid \alpha_k \in S_1 , \ \forall k=0,1,2,\dots \ \} \ .
\end{equation}
Then, the value function of this discrete optimal control problem is the solution of equation~\eqref{discre-hjb} (See for instance~\cite{flemingsoner,bardi2008optimal}). 

Note that an equivalent formulation of the discrete cost functional in~\eqref{dis_cost} is given by
\begin{equation}\label{discrete_cost_eq}
	J^h(\alpha^h,x) = 
	1- \prod_{k=0}^{N(x,\alpha^h)}\left(1-\frac{h}{f(y^h_{\alpha^h}(x;k),\alpha_k)}
	\right)
	\ .
\end{equation}
The equality follows from an elementary computation. 
We will use this formulation of $v^h$ in the following.

\subsection{Order $1$ Convergence Rate Under A Semiconcavity Assumption}

Let us denote 
\begin{equation}
	d_{\mathcal{K}}(x):= \inf_{y \in \mathcal{K}} \|y-x \| \ , \ \text{ for every } x \in \R^d \ , 
\end{equation} the distance function from $x$ to the target set $\K$. We consider the following assumptions on the target set $\K$ and speed function $f$.
\begin{assumption}\label{assump_2} $ $
	\begin{enumerate}
		\item\label{assump_2_1} There exists a constant $M_f>0$ such that
		\begin{equation}\label{semiconcav_f}
			\frac{1}{f(x+z,\alpha)} - 2\frac{1}{f(x,\alpha)} + \frac{1}{f(x-z,\alpha)}  \leq M_f |z|^2, \ \forall x,z \in \R^d, \  \forall \alpha \in S_1 . \end{equation}
		\item\label{assump_2_2} There exists a constant $M_t>0$ such that
		\begin{equation}\label{semiconcave_dst}
			d_{\K} (x+z) + d_{\K}(x-z) -2 d_{\K}(x)  \leq M_t |z|^2, \ \forall x,z \in \R^d  \setminus \mathring{\K} \ .\end{equation}
	\end{enumerate}
\end{assumption}
The assumption presented in~\eqref{assump_2_1} of~\Cref{assump_2} addresses the semiconcavity property of the inverse of the speed function. 
In the following, we give specific criteria for verifying~\eqref{assump_2_2} in~\Cref{assump_2}. 
This condition appeared in~\cite{cannarsa1995convexity} and~\cite{cannarsa2004semiconcave}, and is regarded as a semiconcavity property for the distance function $d_{\K}$ in $\R^d \setminus \mathring{\K}$. The authors of~\cite{cannarsa1995convexity}
have provided sufficient conditions for checking \eqref{semiconcave_dst}, which we present as a lemma: 
\begin{lemma}[Corollary of \protect{\cite[Prop. 3.2]{cannarsa1995convexity}}]\label{ness_semiconcave}
	If there exists $r_t>0$ such that
	\begin{equation} \label{con_semi_dst}
		\forall x \in \K, \ \exists x_0\in \K : x \in \overline{B^d(x_0,r_t)} \subset \K \ , 
	\end{equation}
	then \eqref{semiconcave_dst} holds. In particular, if $\partial \K$ is of class $\Cc^{1,1}$, then  \eqref{semiconcave_dst} holds.\hfill\qed
\end{lemma}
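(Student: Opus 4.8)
The plan is to use the inscribed balls of \eqref{con_semi_dst} as one‑sided barriers. For any center $x_0$ with $\overline{B^d(x_0,r_t)}\subset\K$, the function $y\mapsto\operatorname{dist}(y,\overline{B^d(x_0,r_t)})$ sits above $d_\K$, it is smooth away from $x_0$, and it is visibly semiconcave with constant $1/r_t$; moreover, for a well‑chosen center it touches $d_\K$ at the base point $x$. These observations should reduce \eqref{semiconcave_dst} to a one‑line estimate on the Euclidean norm and give the explicit value $M_t=1/r_t$.

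Concretely, I would fix $x\in\R^d\setminus\mathring{\K}$ and $z$ with $x\pm z\in\R^d\setminus\mathring{\K}$, and first choose the center. Let $\bar x\in\partial\K$ be a nearest point of $\K$ to $x$ (it exists by compactness of $\K$, and it lies on $\partial\K$: either $x\in\partial\K$ and $\bar x=x$, or $x\notin\K$ and a nearest point is necessarily on the boundary). Apply \eqref{con_semi_dst} at $\bar x$ to obtain $x_0\in\K$ with $\bar x\in\overline{B^d(x_0,r_t)}\subset\K$; since $\bar x\in\partial\K$ cannot lie in the open ball $B^d(x_0,r_t)\subset\mathring{\K}$, we get $|\bar x-x_0|=r_t$. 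Note $x_0\in B^d(x_0,r_t)\subset\mathring{\K}$ while $x\notin\mathring{\K}$, hence $|x-x_0|\ge r_t>0$. I would then verify the identity $d_\K(x)=|x-x_0|-r_t$: the inclusion $\overline{B^d(x_0,r_t)}\subset\K$ yields $d_\K(x)\le\operatorname{dist}(x,\overline{B^d(x_0,r_t)})=|x-x_0|-r_t$, and the triangle inequality $|x-\bar x|\ge|x-x_0|-|\bar x-x_0|$ yields the reverse bound; this also covers the degenerate case $x\in\partial\K$, where both sides vanish.

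Next I would observe that $B^d(x_0,r_t)\subset\mathring{\K}$ and $x\pm z\notin\mathring{\K}$, so $|x\pm z-x_0|\ge r_t$, whence the barrier bound applies there too: $d_\K(x\pm z)\le|x\pm z-x_0|-r_t$. Writing $w:=x-x_0$ (so $|w|\ge r_t>0$), combining the bounds at $x+z$ and $x-z$ with the identity at $x$ gives
\[
d_\K(x+z)+d_\K(x-z)-2d_\K(x)\ \le\ |w+z|+|w-z|-2|w| .
\]
To finish I would use concavity of $t\mapsto\sqrt t$: for every $a\ge0$ one has $\sqrt a\le|w|+\tfrac{a-|w|^2}{2|w|}$; applying this with $a=|w+z|^2$ and with $a=|w-z|^2$ and adding, the cross terms cancel and $|w+z|+|w-z|-2|w|\le|z|^2/|w|\le|z|^2/r_t$. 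This is \eqref{semiconcave_dst} with $M_t=1/r_t$. For the ``in particular'' statement I would invoke the classical fact that a compact set whose boundary is of class $\Cc^{1,1}$ satisfies a uniform interior ball condition, i.e.\ \eqref{con_semi_dst} for some $r_t>0$ (via the implicit function theorem together with a compactness argument over $\partial\K$; see \cite{cannarsa2004semiconcave}), and then apply the first part.

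I do not expect a genuine obstacle here. The only delicate points are the careful bookkeeping of which of $x$, $x\pm z$, $x_0$, $\bar x$ lie in $\mathring{\K}$, on $\partial\K$, or outside $\K$ — in particular handling the degenerate case $d_\K(x)=0$ — together with the standard but not one‑line verification that $\Cc^{1,1}$ regularity of $\partial\K$ produces an interior ball radius $r_t$ that is uniform over the boundary.
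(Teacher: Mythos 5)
Your argument is correct. The paper itself gives no proof of this lemma: it is stated with a \qed and delegated entirely to \cite[Prop.~3.2]{cannarsa1995convexity} (and, for the $\Cc^{1,1}$ case, implicitly to the classical fact that $\Cc^{1,1}$ boundaries satisfy a uniform interior ball condition). What you supply is a self-contained barrier proof of that cited result, and it checks out: the choice of $x_0$ from the interior ball at the nearest boundary point $\bar x$, the identity $d_\K(x)=|x-x_0|-r_t$ (both inequalities verified, including the degenerate case $x\in\partial\K$), the upper barrier $d_\K(x\pm z)\le |x\pm z-x_0|-r_t$ (legitimate because $x\pm z\notin\mathring\K$ forces $|x\pm z-x_0|\ge r_t$), and the tangent-line estimate $|w+z|+|w-z|-2|w|\le |z|^2/|w|$ are all sound, and you get the explicit constant $M_t=1/r_t$ that the citation leaves implicit. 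One remark on the statement rather than on your proof: \eqref{semiconcave_dst} is written with the loose quantifier ``$\forall x,z\in\R^d\setminus\mathring\K$''; you correctly read it in the standard Cannarsa--Sinestrari sense that $x$ and $x\pm z$ all lie in $\R^d\setminus\mathring\K$, which is the reading under which your barrier bounds (and the result itself) are valid — without that restriction the inequality can fail even for a closed ball. The only step left at the level of a sketch is the $\Cc^{1,1}\Rightarrow$ uniform interior ball implication, which is standard and is also only cited in the paper, so this is not a gap relative to what the authors themselves provide.
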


\begin{theorem}[Semiconcavity of discrete time value function]\label{semiconcave-value}
	Suppose that~\Cref{assp1} and~\Cref{assump_2} hold. Then, we have 
	\begin{equation}\label{uhorder1}
		v^h(x+z) - 2v^{h}(x) + v^h(x-z) \leq C_v |z|^2 , \text{ for every } x, z \in \R^d \setminus \K \ ,
	\end{equation}
	where $C_v$ is a constant depends on $M_f, M_t, \upperboundf, \lowerboundf$.
	
\end{theorem}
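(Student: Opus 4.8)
The plan is to prove the semiconcavity estimate~\eqref{uhorder1} directly from the optimal control representation~\eqref{value_discret}--\eqref{discrete_cost_eq} of $v^h$, by taking a near-optimal control for the center point $x$ and constructing symmetric competitor controls for $x+z$ and $x-z$. Fix $x,z\in\R^d\setminus\K$ and let $\alpha^h=\{\alpha_k\}$ be $\varepsilon$-optimal for $x$, with trajectory $y^h_{\alpha^h}(x;\cdot)$ hitting $\K$ at step $N=N(x,\alpha^h)$. The first step is to run the \emph{same} control sequence $\{\alpha_k\}$ from $x+z$ and from $x-z$: the resulting trajectories are exactly $y^h_{\alpha^h}(x;k)\pm z$ for $k=0,\dots,N$, so they are translates of the optimal trajectory. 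The difficulty is that these translated trajectories need not have reached $\K$ at step $N$; one must append a short ``correction'' segment that drives $y^h_{\alpha^h}(x;N)\pm z$ into $\K$. Here is where~\eqref{assump_2_2} enters: since $y^h_{\alpha^h}(x;N)\in\K$, the points $y^h_{\alpha^h}(x;N)\pm z$ are at distance at most $d_\K(\cdot)$ from $\K$, and the semiconcavity of $d_\K$ gives $d_\K(p+z)+d_\K(p-z)\le 2d_\K(p)+M_t|z|^2 = M_t|z|^2$ for $p=y^h_{\alpha^h}(x;N)\in\K$. So the \emph{sum} of the extra distances to be covered from the two translated endpoints is $O(|z|^2)$, which (after a straight-line correction taking $\lceil \text{dist}/h\rceil$ steps of size $h$ and possibly one shorter step, or by a suitable rounding argument) contributes only $O(|z|^2)$ extra steps in total across the two competitors.

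With these two admissible controls $\beta^{h,+}$ (for $x+z$) and $\beta^{h,-}$ (for $x-z$) in hand, the second step is to bound $v^h(x+z)+v^h(x-z)\le J^h(\beta^{h,+},x+z)+J^h(\beta^{h,-},x-z)$ and compare the right side to $2J^h(\alpha^h,x)\le 2v^h(x)+2\varepsilon$. Using the product form~\eqref{discrete_cost_eq}, $J^h(\beta^{h,\pm},x\pm z)=1-\prod_{k=0}^{N_\pm}\big(1-\tfrac{h}{f(y^h_{\beta^{h,\pm}}(x\pm z;k),\beta^{h,\pm}_k)}\big)$. Along the first $N$ common steps the arguments are $y^h_{\alpha^h}(x;k)\pm z$ with the same control $\alpha_k$, so the three products share the same ``shape'' and one can expand
\[
P_\pm:=\prod_{k=0}^{N}\Big(1-\tfrac{h}{f(y^h_{\alpha^h}(x;k)\pm z,\alpha_k)}\Big),\qquad P_0:=\prod_{k=0}^{N}\Big(1-\tfrac{h}{f(y^h_{\alpha^h}(x;k),\alpha_k)}\Big),
\]
and show $P_+ + P_- - 2P_0 \ge -C|z|^2$. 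This is the analytic heart of the argument: writing each factor as $e^{\log(1-\cdots)}$ (valid since $h<1/\upperboundf$ keeps the factors in a compact subinterval of $(0,1)$), the log of a product becomes a sum of $\log(1-\tfrac{h}{f})$ terms, and a second-difference estimate on $p\mapsto \log\big(1-\tfrac{h}{f(p,\alpha)}\big)$ along the translation direction $z$ follows from~\eqref{assump_2_1} (semiconcavity of $1/f$) together with the Lipschitz bound on $f$ and the uniform bounds $\lowerboundf\le f\le\upperboundf$; the linear-in-$N$ accumulation of these $|z|^2$ terms is then absorbed because $Nh$ is controlled by $T(x)\le$ (diameter-type bound)$/\lowerboundf$, so $Nh = O(1)$ and the total is still $O(|z|^2)$. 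Combining the product comparison with the $O(|z|^2)$ cost of the two correction segments (which only decrease the products $P_\pm$ by a further multiplicative factor $1-O(|z|^2)$) yields $J^h(\beta^{h,+},x+z)+J^h(\beta^{h,-},x-z) \le 2J^h(\alpha^h,x) + C_v|z|^2$, and letting $\varepsilon\to0$ gives~\eqref{uhorder1}.

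The main obstacle I anticipate is the bookkeeping for the correction segments together with the requirement that the constant $C_v$ depend only on $M_f,M_t,\upperboundf,\lowerboundf$ (and not on $x,z,h,N$): one must be careful that the number of extra steps is genuinely $O(|z|^2/h)$ only \emph{in sum}, since each individual translated endpoint may be as far as $|z|$ from $\K$ — it is only the \emph{sum} of the two distances that is $O(|z|^2)$, so the cost comparison has to be done on $J^h(\beta^{h,+})+J^h(\beta^{h,-})$ jointly rather than term by term, and the convexity/monotonicity of $t\mapsto 1-\prod(1-\cdot)$ must be used to convert a bound on the sum of extra distances into a bound on the sum of extra costs. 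A secondary technical point is handling the case $|z|$ not small (where $|z|^2$ may exceed the trivial bound $\|v^h\|_\infty\le1$), but there~\eqref{uhorder1} is vacuous or follows from boundedness, so one may assume $|z|$ small from the outset. The uniform control $Nh=O(1)$ needed for the log-sum accumulation should follow from~\Cref{convergence}/boundedness of $T$ on the relevant region, or can be built into the statement's implicit assumption that $x\in\R^d\setminus\K$ lies in a fixed bounded region of interest.
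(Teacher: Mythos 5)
Your overall strategy is the same as the paper's: run the (near-)optimal control for $x$ from $x\pm z$, use~\eqref{semiconcav_f} to compare the running costs along the three translated trajectories, and use~\eqref{semiconcave_dst} at the terminal point $p=y^h_{\alpha^h}(x;N)\in\K$ to control the correction segments. Two points in your write-up are genuine gaps rather than bookkeeping.

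First, your mechanism for absorbing the linear-in-$N$ accumulation of the $M_f|z|^2$ terms is wrong as stated. You invoke ``$Nh\le T(x)/\lowerboundf=O(1)$ via a diameter-type bound,'' and at the end you concede this needs $x$ to lie in a fixed bounded region. The theorem is stated on all of $\R^d\setminus\K$ and claims $C_v$ depends only on $M_f,M_t,\upperboundf,\lowerboundf$; a diameter-dependent constant does not prove it. The correct save, which is exactly what the paper uses, is the discount: the sum of second differences enters multiplied by the product $P_0\le(1-h/\upperboundf)^N$, and $(N+1)h\,(1-h/\upperboundf)^N\le C\,\upperboundf$ uniformly in $N$ and $h$. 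Your log formulation admits the same fix, since $P_++P_--2P_0\ge 2P_0\bigl(e^{(S_++S_--2S_0)/2}-1\bigr)\ge -C\,P_0\,Nh\,|z|^2$, but as written the absorption step is unjustified and the claimed dependence of $C_v$ fails.

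Second, you only treat the case in which both translated trajectories survive to step $N$ and then need a terminal correction; you never consider the case in which one of them enters $\K$ strictly before step $N$. This is precisely the paper's Case~2, which takes up half of its proof and requires a separate construction (after the early hitting time $N_-$ the control for $x+z$ repeats each remaining control of the central trajectory twice, exploiting $d_\K(y^h_{x,N_-})\le|z|$ to keep the extra cost $O(|z|^2)$). Your route can in fact avoid this case split, because $J^h$ is evaluated at the \emph{first} hitting time and each factor $1-h/f$ lies in $(0,1)$, so stopping early only decreases $1-\prod$ relative to the trajectory extended to step $N$ plus correction; hence the extended-product bound you derive remains an upper bound for $v^h(x\mp z)$ even when the hitting is early. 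But this observation has to be made explicitly — without it the construction of your competitors $\beta^{h,\pm}$ and the pairing $P_++P_--2P_0$ are not defined in that case. Finally, a minor remark: since $d_\K(p)=0$ and both $d_\K(p\pm z)\ge0$, inequality~\eqref{semiconcave_dst} gives $d_\K(p\pm z)\le M_t|z|^2$ \emph{individually}, so the joint bookkeeping of the two correction segments that you flag as the main obstacle is not actually needed.
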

\begin{proof}Let us denote $\alpha^*_x = \{\alpha^*_{x,0}, \alpha^*_{x,1}, \alpha^*_{x,2},\dots,\alpha^*_{x,N_x} \}$ the discrete optimal control for which the infimum in \eqref{value_discret} is obtained, and let us simply denote $N_x = N(x,\alpha^*_x)$. For the problem starting from $(x+z)$ ($x-z$ resp. ), let us consider a control $\alpha'_{x+z}$ ($\alpha'_{x-z}$ resp. ) defined as following: $\alpha'_{x+z}$ ($\alpha'_{x-z}$ resp. ) takes the same control as $\alpha^*_x$ until one of the three trajectories $y^h_{\alpha^*}(x;\cdot)$, $y^h_{\alpha'_{x+z}}(x+z;\cdot)$ and $y^h_{\alpha'_{x-z}}(x-z;\cdot)$ reaches $\K$. Then, we consider two cases.
	
	\textbf{Case 1.} $\forall N\leq N_x, \ y^h_{\alpha'_{x+z}}(x+z;N) \notin \K $ and $ y^h_{\alpha'_{x-z}}(x-z;N) \notin \K$. In this case, the optimal trajectory for the problem starting from $x$ will first reach $\K$. Then, for the problem starting from $x+z$ ($x-z$ resp. ), we take the control following the shortest distance path, in euclidean sense, from $y^h_{\alpha'_{x+z}}(x+z; N_x)$ ($y^h_{\alpha'_{x-z}}(x-z; N_x)$ resp. ) to $\K$. Let $N_+$, $N_-$ denote the steps for which $y^h_{\alpha'_{x+z}}(x+z; N_+), y^h_{\alpha'_{x-z}}(x-z; N_-) \in \K$. For easy expression, we simply denote $y^h_{x,m} := y^h_{\alpha^*_x}(x;m)$, $y^h_{+,m} := y^h_{\alpha'_{x+z}}(x+z; m)$, $y^h_{-,m} := y^h_{\alpha'_{x-z}}(x-z; m)$ for $m = 0,1,2,\dots$ and $\alpha_+ = \alpha'_{x+z}$, $\alpha_- = \alpha'_{x-z}$.
	
	Following \eqref{discrete_cost_eq}, we have:
	\begin{equation}\label{case_1}
		\begin{aligned}
			&v^h(x+z) - 2 v^h(x) + v^h(x-z) \\
			& \leq \{ J^h(\alpha_+,x+z) + J^h(\alpha_-,x-z) - 2 J^h(\alpha^*_x,x) \} \\
			& \leq \left\{ \Big(1 - \prod_{k=0}^{N_+} (1-\frac{h}{f(y^h_{+,k},\alpha_+)})\Big) + \Big(1 - \prod_{k=0}^{N_-} (1-\frac{h}{f(y^h_{-,k},\alpha_-)})\Big) \right. \\ 
			&\qquad \left.  -2\Big(1 - \prod_{k=0}^{N_x} (1-\frac{h}{f(y^h_{x,k},\alpha_x^*)})\Big) \right\} \\
			&\leq\prod_{k=0}^{N_x} (1-\frac{h}{f(y^h_{x,k},\alpha_x^*)}) \left\{ \Big( 1- \frac{\prod_{k=0}^{N_+} (1-\frac{h}{f(y^h_{+,k},\alpha_+)})}{\prod_{k=0}^{N_x} (1-\frac{h}{f(y^h_{x,k},\alpha_x^*)})} \Big) + \Big( 1 - \frac{\prod_{k=0}^{N_-} (1-\frac{h}{f(y^h_{-,k},\alpha_-)})}{\prod_{k=0}^{N_x} (1-\frac{h}{f(y^h_{x,k},\alpha_x^*)})} \Big) \right\}  \\
			&\leq \left[  (1-\frac{h}{\upperboundf})^{N_x} \left\{ \sum_{k=0}^{N_x} \left( \frac{h}{f(y^h_{+,k},\alpha^*_x)} + \frac{h}{f(y^h_{-,k},\alpha^*_x)} - 2\frac{h}{f(y^h_{x,k},\alpha_x^{*})}\right) \right\} \right]  \\ 
			& \quad  +\left[  (1-\frac{h}{\upperboundf})^{N_x}  \left\{ \sum_{k=N_x}^{N_{x_+}}\frac{h}{f(y^h_{+,k},\alpha_+)}  +   \sum_{k=N_x}^{N_{x_-}}  \frac{h}{f(y^h_{-,k},\alpha_-)} \right\} \right] \ .
		\end{aligned}
	\end{equation} 
	Notice that in first $N_x$ steps, three dynamics take the same control $\alpha^*_x$. 
	Let us first focus on the first part inside of $[ \ ]$ in~\eqref{case_1}, for which we denote by $\Delta_1$. Denote for simplification $A_k: =  \frac{1}{f(y^h_{+,k},\alpha_x^{*})} + \frac{1}{f(y^h_{-,k},\alpha_x^{*})} - 2\frac{1}{f(y^h_{x,k},\alpha_x^{*})}$, then
	\begin{equation}
		\begin{aligned}
			A_k &= \frac{1}{ f( y^h_{x,k} + ( y^h_{+,k} -y^h_{x,k}) ,\alpha_x^{*})  } - 2\frac{1}{f(y^h_{x,k},\alpha_x^{*})} + \frac{1}{f( y^h_{x,k} - ( y^h_{+,k} -y^h_{x,k})  ,\alpha_x^{*} )} \\
			&\quad + \frac{1}{f(y^h_{-,k},\alpha_x^{*})} -  \frac{1}{f( y^h_{x,k} - ( y^h_{+,k} -y^h_{x,k})  ,\alpha_x^{*} )} \\ 
			&\leq M_f | y^h_{+,k} -y^h_{x,k} |^2 + \frac{L_f}{\lowerboundf^2} | y^h_{+,k} - 2y^h_{x,k} + y^h_{-,k} | \ .
		\end{aligned}
	\end{equation}
	By the above construction of $\alpha_+$ and $\alpha_-$, we notice that $|y^h_{+,k} - y^h_{x,k}| = z$ and $| y^h_{+,k} - 2y^h_{x,k} + y^h_{-,k} | = 0$, for all $k \in \{0,1,2,\dots, N_x \}$. Thus $A_k \leq M_f z^2$. Then we have:
	\begin{equation}\label{firstpart}
		\Delta_1 \leq M_f |z|^2 \sum_{k=0}^{N_x} (1-\frac{h}{\upperboundf})^{N_x} h \leq M_f \upperboundf |z|^2 \ .
	\end{equation}
	For the second part inside of $[ \ ]$ in \eqref{case_1}, for which we denote by $\Delta_2$, we notice that at the end of $N_x$ step, $y^h_{x,N_x}\in \K$, $y^h_{+,N_x} = y^h_{x,N_x} + z$, $y^h_{-,N_x} = y^h_{x,N_x} - z$. Thus, by \eqref{semiconcave_dst}, we have:
	\begin{equation}
		d_{\K}( y^h_{+,N_x} ) + d_{\K}(y^h_{-,N_x}) \leq M_t |z|^2 \ .
	\end{equation}
	Hence we have
	\begin{equation}\label{secondpart}
		\Delta_2 \leq \frac{M_t}{\lowerboundf} |z|^2 \ .
	\end{equation}
	Combine \eqref{firstpart} and \eqref{secondpart}, we deduce \eqref{uhorder1} with $C_1 = M_f \upperboundf +  \frac{M_t}{\lowerboundf} $. 
	
	\textbf{Case 2.} $\exists N_- \leq N_x$ such that $y^h_{\alpha'_{x-z}}(x-z;N_-) \in \K$. In this case, the optimal trajectory for the discrete time problem starting from $x-z$ first reaches $\K$. Then, for the problem starting from $x+z$, let us consider a control $\alpha'_{x+z}$ defined as follows:
	\begin{equation}\label{construct_control}
		\alpha'_{x+z,k} = \left\{
		\begin{aligned}
			& \alpha^*_{x,k} \ , \quad & k \in \{0,1,2,\dots, N_-\} \ ; \\
			& \alpha^*_{x, (N_- + \lfloor \frac{k - N_-}{2} \rfloor) } \ ,  \quad & k \in \{ N_-,(N_- +1),\dots, (2N_x - N_- )\} \ ; \\
			&\text{following Euclidean shortest path, } &  (2N_x - N_-)< k \leq N_+ \ ,
		\end{aligned}
		\right.
	\end{equation}
	with $y^h_{\alpha'_{x+z}}(x+z;N_+) \in \K$. We argue as $N_+ \geq (2N_x - N_- )$, then the result will automatically holds as it is in a weaker situation when $N_+ < (2N_x - N_- )$. We also take the same simplified notations as in Case 1., and we omit the same computations. Then we have
	\begin{equation}\label{case_2}
		\begin{aligned}
			&v^h(x+z) - 2 v^h(x) + v^h(x-z) \\
			& \leq \{ J^h(\alpha_+,x+z) + J^h(\alpha_-,x-z) - 2 J^h(\alpha^*_x,x) \} \\
			& \leq \left[ (1-\frac{h}{\upperboundf})^{N_x} \left\{ \sum_{k=0}^{N_-} \left( \frac{h}{f(y^h_{+,k},\alpha^*_x)} + \frac{h}{f(y^h_{-,k},\alpha^*_x)} - 2\frac{h}{f(y^h_{x,k},\alpha_x^{*})}\right) \right\} \right]  \\  
			& \quad + \left[   \sum_{k=(N_- + 1)}^{(2N_x - N_-)}\Big( \frac{h}{f(y^h_{+,k},\alpha_+)} \Big) - 2 \sum_{k = (N_- + 1)}^{N_x} \Big( \frac{h}{f(y^h_{x,k}, \alpha^*_x)} \Big)  \right] \\
			& \quad+ \left[ \sum_{2N_x-N_- +1}^{N_+}\Big( \frac{h}{f(y^h_{+,k},\alpha_+)} \Big) \right] \ .
		\end{aligned}
	\end{equation}
	The first part inside of $[ \ ]$ in~\eqref{case_2} follows the same computation as in Case 1. Let us now focus on the second part inside of $[ \ ]$ in~\eqref{case_2}, for which we denote by $\Delta'_2$. Based on the above construction of $\alpha_-$, we have  $y^h_{-,N_-} = y^h_{x,N_-} - z$. Since $y^h_{-,N_-} \in \K $, we have $d_{\K} (y^h_{x,N_-} ) \leq z$. Since $\alpha^*_x$ is the optimal control for the discrete time problem starting from $x$, we have
	\begin{equation}
		\sum_{k = (N_- + 1)}^{N_x} \frac{| y^h_{x,k} - y^h_{x,k-1} |}{f(y^h_{x,k-1},\alpha^*_x)} \leq \frac{d_{\K}(y^h_{x,N_-})}{\lowerboundf} \leq \frac{|z|}{\lowerboundf} \ ,
	\end{equation}
	which implies
	\begin{equation}
		\sum_{k = (N_- + 1)}^{N_x} |y^h_{x,k} - y^h_{x,k-1}  | \leq \frac{\upperboundf}{\lowerboundf} |z| \ .
	\end{equation}
	By the above construction of $\alpha_+$ in~\eqref{construct_control}, we have $y^h_{+,N_-} = y^h_{x,N_-} + z $. Moreover, for every $j \in \{ 1,2,\dots, (N_x - N_-) \}$, we have:
	\begin{equation}\label{distance_+x}
		\max \{|y^h_{+, (N_- +2j-1)} - y^h_{x,(N_- +j)} |, \ |y^h_{+,(N_- + 2j)} - y^h_{x,(N_- + j)}| \} \leq (\frac{\upperboundf}{\lowerboundf} + 1) |z| \ . 
	\end{equation}
	Then, by the Lipschitz continuity of $f$, and the fact that $\alpha_{+,(N_- + 2j-1)} = \alpha_{+,(N_- +2j)} = \alpha^*_{x,N_- + j}$, we have:
	\begin{equation}\label{Delta2p}
		\begin{aligned}
			\Delta'_2 &\leq \frac{h}{\lowerboundf^2} \sum_{j=1}^{N_x-N_-} \Big( |f(y^h_{+, (N_- +2j-1)} ,\alpha_+) - f( y^h_{x,(N_- +j)},\alpha^*_x )| \\
			&\qquad+  | f(y^h_{+,(N_- + 2j)} ,\alpha_+  ) - f( y^h_{x,(N_- +j)},\alpha^*_x ) |   \Big) \\
			&\leq \frac{2 L_f \upperboundf}{\lowerboundf^3}(\frac{\upperboundf}{\lowerboundf}+1) |z|^2 \ .
		\end{aligned}
	\end{equation} 
	For the third part inside of $[ \ ]$ in~\eqref{case_2}, for which we denote by $\Delta'_3$, notice that 
	\begin{equation}
		y^h_{+,(2N_x -N_-)} - y^h_{x,N_x} = y^h_{x,N_x} - y^h_{-,N_-} \leq (\frac{\upperboundf}{\lowerboundf} + 1) |z| \ . 
	\end{equation}
	Then, by \eqref{semiconcave_dst}, and the fact that $y^h_{x,N_x} \in \K$, $y^h_{-,N_-} \in \K$, we have
	\begin{equation}
		d_{\K}(y^h_{+,(2N_x-N_-)}) \leq M_t (\frac{\upperboundf}{\lowerboundf} + 1)^2 |z|^2 \ .
	\end{equation}
	Thus
	\begin{equation}\label{Delta3p}
		\Delta'_3 \leq \frac{M_t}{\lowerboundf}(\frac{\upperboundf}{\lowerboundf}+1)^2|z|^2 \ .
	\end{equation}
	Combine \eqref{Delta2p} and \eqref{Delta3p}, we deduce \eqref{uhorder1} with $C_2 = M_f \upperboundf + \frac{2 L_f \upperboundf}{\lowerboundf^3}(\frac{\upperboundf}{\lowerboundf}+1) + \frac{M_t}{\lowerboundf}(\frac{\upperboundf}{\lowerboundf}+1)$. 
	
	Since another possible case, that is $\exists N_+ \leq N_x$ such that $y^h_{\alpha'_{x+z}}(x+z;N_+) \in \K$, is symmetric as Case 2., we conclude \eqref{uhorder1} with $C_v = \max \{C_1, C_2\} $. 
\end{proof}

The semiconcavity property of the discrete value function leads to an improved convergence rate, which we state as the main result of this section below.
\begin{theorem}\label{rate_main}
	Suppose that~\Cref{assp1} and~\Cref{assump_2} hold. 
	There exists a constant $C_1$ depends on $M_f, M_t, L_v, L_f, \upperboundf, \lowerboundf$ such that,  for every $0<h<\frac{1}{\upperboundf}$:
	\begin{equation}\label{rate1_v}
		\|v^h -v\|_\infty \leq C_1 h \ .
	\end{equation}
\end{theorem}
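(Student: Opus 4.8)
## Proof proposal for Theorem~\ref{rate_main}

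The plan is to revisit the viscosity-solution doubling argument from the proof of Proposition~\ref{converg}, but to exploit the semiconcavity estimate \eqref{uhorder1} of Theorem~\ref{semiconcave-value} to sharpen the choice of the penalization parameter $\varepsilon$. The key observation is that in the proof of Proposition~\ref{converg} the term $\frac{2\upperboundf h}{\varepsilon^2}$ forces the balance $\varepsilon = h^{1/4}$, giving only $h^{1/2}$; but one of the two one-sided bounds can be obtained without introducing the quadratic penalization at all, by instead comparing $v^h$ to a suitable test function built from $v$ and using the second-order regularity of $v^h$. Concretely, for the bound $v^h - v \le C_1 h$ one runs the doubling argument as before to produce maximizing points $x_0$ and $y_0$; the new input is that, near $x_0$, the function $x \mapsto v^h(x)$ admits a quadratic \emph{upper} bound with curvature $C_v$ coming from \eqref{uhorder1}, which lets one estimate $v^h(x_0 + h\alpha^*) - v^h(x_0) - (D\theta_\varepsilon(x_0-y_0)\cdot\alpha^*)h$ not by $\frac{2}{\varepsilon^2}\alpha^* h^2$ as in \eqref{upboundvha}, but by a term of size $C_v h^2$ that no longer involves $\varepsilon$.

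First I would set up the two displayed inequalities exactly as \eqref{2vhy} (from the viscosity supersolution property of $v$ at $y_0$) and \eqref{upbondvh} (from the scheme \eqref{discre-hjb} at $x_0$, with optimal direction $\alpha^*$). Then, instead of \eqref{upboundvha}, I would use \eqref{max_psi} with the pair $(x_0+h\alpha^*,y_0)$ together with the semiconcavity bound \eqref{uhorder1}: writing $v^h(x_0+h\alpha^*) \le 2v^h(x_0) - v^h(x_0-h\alpha^*) + C_v h^2$ and then bounding $-v^h(x_0-h\alpha^*)$ by $-v^h(x_0) + (D\theta_\varepsilon(x_0-y_0)\cdot\alpha^*)h + (\text{terms from }\theta_\varepsilon,\xi)$ via \eqref{max_psi} applied at $(x_0-h\alpha^*,y_0)$. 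This replaces the $O(h^2/\varepsilon^2)$ error by $O(h^2)$ plus the $\theta_\varepsilon$-contribution that is genuinely second order in $h$. Combining with \eqref{2vhy} as in the passage from \eqref{first_approx} to \eqref{second_approx}, and using the Lipschitz bound $|x_0-y_0| \le (L_v+\zeta)\varepsilon^2$ from \eqref{bound_x0y0}, one obtains, after letting $\zeta \to 0$, an estimate of the form $v^h(x) - v(x) \le a\varepsilon^2 + b\frac{h^2}{\varepsilon^2} + c\, C_v h^2$ in which both the $\theta_\varepsilon$ curvature terms and the new term scale so that the optimal choice $\varepsilon^2 = h$ yields $v^h - v \le C_1 h$ with $C_1$ depending on $M_f, M_t, L_v, L_f, \upperboundf, \lowerboundf$ (through $C_v$ and the earlier constants).

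For the reverse inequality $v - v^h \le C_1 h$ I would argue more directly using the control-problem representation \eqref{discrete_cost_eq} of $v^h$: given $x$, take a near-optimal control for the continuous problem \eqref{value}, freeze it on the grid $[k,k+1[$ to get an admissible $\alpha^h \in \A^h$, and compare the discrete cost $J^h(\alpha^h,x)$ to the continuous cost along the corresponding trajectory. The trajectory error between $y_\alpha(x;\cdot)$ and $y^h_{\alpha^h}(x;\cdot)$ over a bounded time horizon is $O(h)$ by the Lipschitz property of $f$ (Assumption~\ref{assp1}) and a discrete Grönwall argument, and the exit-time / exit-cost discrepancy is controlled by the semiconcavity of $d_\K$ (Assumption~\ref{assump_2}\eqref{assump_2_2}); both contribute $O(h)$. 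Here one uses that $v$ is bounded away from $1$ on any compact set, so $-\log(1-v)$ is Lipschitz in $v$ and the $T \leftrightarrow v$ change of variables \eqref{changevar} transfers the $O(h)$ trajectory error to an $O(h)$ error in $v^h$.

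I expect the main obstacle to be the first (viscosity) direction: one must check carefully that after inserting \eqref{uhorder1} the \emph{only} remaining $\varepsilon$-dependence that degrades the rate is the explicit $\theta_\varepsilon$ penalization, whose curvature $2/\varepsilon^2$ multiplies a factor that is $O(h^2)$ (not $O(h)$) precisely because the two comparisons at $x_0 \pm h\alpha^*$ are symmetric and the first-order terms in $h$ cancel against $D\theta_\varepsilon\cdot\alpha^*$; this cancellation, which is exactly what semiconcavity of $v^h$ buys, is the crux. A secondary technical point is that \eqref{uhorder1} holds only for $x, x\pm z \in \R^d\setminus\K$, so near $\partial\K$ one must either invoke the boundary condition $v^h = 0$ on $\K$ together with the semiconcavity of $d_\K$, or localize the doubling argument away from $\K$ and treat a boundary layer of width $O(h)$ separately — which is harmless since $\|v^h-v\|_\infty$ there is already $O(h)$ by Lipschitz continuity of both functions and the shared boundary data.
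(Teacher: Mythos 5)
Your overall strategy coincides with the paper's: the direction $v-v^h\le Ch$ is handled by the direct control-theoretic comparison over $\A^h\subseteq\A$ (your Gr\"onwall variant is essentially the paper's computation \eqref{vvhess}), and for the harder direction $v^h-v\le Ch$ you correctly identify the crux, namely that \Cref{semiconcave-value} should let one replace the term $\frac{2}{\varepsilon^2}\alpha^*h^2$ in \eqref{upboundvha} by an $\varepsilon$-free term of size $O(C_vh^2)$, after which the choice $\varepsilon=h^{1/2}$ gives order $h$. However, the specific mechanism you propose for producing that $\varepsilon$-free bound does not work. In the chain $v^h(x_0+h\alpha^*)\le 2v^h(x_0)-v^h(x_0-h\alpha^*)+C_vh^2$ you need an \emph{upper} bound on $-v^h(x_0-h\alpha^*)$, i.e.\ a \emph{lower} bound on $v^h(x_0-h\alpha^*)$; but \eqref{max_psi} applied at $(x_0-h\alpha^*,y_0)$, which is the maximality of $\psi$ at $(x_0,y_0)$, only yields $v^h(x_0-h\alpha^*)\le v^h(x_0)+\theta_\varepsilon(x_0-y_0)-\theta_\varepsilon(x_0-h\alpha^*-y_0)+\zeta(\cdots)$, an \emph{upper} bound. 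The inequality points the wrong way, and the three-point argument does not close: maximality of $\psi$ can never bound $v^h$ from below at a nearby point.

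The paper closes this step by a genuinely one-sided, global use of semiconcavity rather than a three-point identity, and you need some version of it. Set $p=D\theta_\varepsilon(x_0-y_0)+\zeta D_x\xi(x_0,y_0)$ and $\vartheta(x)=v^h(x_0+x)-v^h(x_0)+p\cdot x$. Adding an affine function does not change second differences, so $\vartheta$ is $C_v$-semiconcave by \eqref{uhorder1}; $\vartheta(0)=0$; and writing $\vartheta(x)=\psi(x_0+x,y_0)-\psi(x_0,y_0)+\bigl(\theta_\varepsilon(x_0-y_0)-\theta_\varepsilon(x_0+x-y_0)+D\theta_\varepsilon(x_0-y_0)\cdot x\bigr)+\zeta\,o(|x|)$, where the $\theta_\varepsilon$-remainder equals $|x|^2/\varepsilon^2$ and $\psi(x_0+x,y_0)-\psi(x_0,y_0)\le0$, one gets $\limsup_{|x|\to0}\vartheta(x)/|x|\le0$. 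The concave function $\vartheta(x)-\frac{C_v}{2}|x|^2$ then vanishes at $0$ and satisfies $\phi(x)\le\phi(tx)/t\to\le 0$ as $t\to0^+$, hence is nonpositive everywhere; taking $x=h\alpha^*$ gives exactly \eqref{2vh-h-h-1}, the $\varepsilon$-free replacement of \eqref{upboundvha}, and the rest of your outline (combine with \eqref{2vhy}, use \eqref{bound_x0y0}, take $\varepsilon=h^{1/2}$, let $\zeta\to0$) goes through as in the paper. Your closing remark that \eqref{uhorder1} is only stated on $\R^d\setminus\K$ and that a boundary layer near $\partial\K$ needs separate (but easy) treatment is a fair technical point which the paper itself elides.
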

\begin{proof}
	Let us first show that $\sup_{x} (v(x) - v^h(x)) \leq C h$. Since $\A^h \subseteq \A$, we always have 
	\begin{equation}
		\begin{aligned}
			v(x) - v^h(x) &= \inf_{\alpha \in \A} J(x,\alpha) - \inf_{\alpha^h \in \A^h} J^h (x,\alpha^h) \\
			& \leq \inf_{\alpha^h \in \A^h}  J(x,\alpha^h) - \inf_{\alpha^h\in \A^h} J^h(x,\alpha^h) \\
			&\leq \sup_{\alpha^h \in \A^h } ( J(x,\alpha^h) - J^h(x,\alpha^h) ) \ . 
		\end{aligned}
	\end{equation}
	For the discrete time control problem, let us denote $N_x$ such that $y^h_{\alpha^h}(x; N_x)\in \K$ for a given $\alpha^h$. We then have:
	\begin{equation}\label{vvhess}
		\begin{aligned}
			v(x) - v^h(x)  &\leq \sup_{\alpha^h \in \A^h}	(J(x,\alpha^h) - J^h (x,\alpha^h) ) \\
			&\leq \| e^{-\int_{0}^{N_x h } \frac{1}{f(y_{\alpha^h}(x;s),\alpha^h(\lfloor \frac{s}{h} \rfloor)) } ds} - e^{- \sum_{k=0}^{N_x} \frac{h}{f(y^h_{\alpha^h}(x;k),\alpha^h(k))}} \| 
			\leq \frac{L_f}{2 \lowerboundf^2} h \ .
		\end{aligned}
	\end{equation}
	
	To show $\sup_{x}(v^h(x) - v(x)) \leq C h$, we use the same definition of auxiliary functions as the in proof of~\Cref{converg}. Then, similarly as in the proof of~\Cref{converg},
	let $y \to (v^h(x_0) + \theta_{\varepsilon}(x_0-y) + \zeta \xi(x_0,y))$ be the test function, we have:
	\begin{equation}\label{2vh-y0}
		v(y_0) -  ((D\theta_{\varepsilon}(x_0-y_0) \cdot \alpha^{*}- \zeta D_y\xi(x_0,y_0))\cdot \alpha^{*}) f(y_0,\alpha^{*}) - 1 \geq 0 \enspace,
	\end{equation}
	for some $\alpha^{*} \in S_1$. Moreover, let us consider a function
	\begin{equation}
		\vartheta(x) =  v^h(x_0+x) - v^h(x_0) + (D\theta_{\varepsilon}(x_0 - y_0) + \zeta D_x\xi(x_0,y_0))\cdot x \enspace.
	\end{equation}
	Then we have $\vartheta(0) = 0$, and 
	\begin{equation}
		\vartheta(x+z)-2\vartheta(x)+\vartheta(x-z) = v^h(x_0+x+z) - 2v^h(x_0+x)+v^h(x_0+x-z) \leq C_v |z|^2,
	\end{equation}
	by \Cref{semiconcave-value}. Moreover, by the definition of $\psi$, we have 
	\begin{equation} 
		\begin{aligned}
			\vartheta(x) &= \psi(x_0+x,y_0) -\psi(x_0,y_0) + \theta_{\varepsilon}(x_0-y_0) - \theta_{\varepsilon}(x_0+x-y_0)\\
			&\quad+D\theta_{\varepsilon}(x_0-y_0) x + \zeta (\xi(x_0,y_0) - \xi(x_0+x,y_0) + D_x\xi(x_0,y_0)\cdot x) \enspace.
		\end{aligned}
	\end{equation}
	Since $\psi$ gets it's maximum at $(x_0,y_0)$, we then have 
	$
	\limsup_{|x|\to 0}  \frac{\vartheta(x)}{|x|} =  \limsup_{|x| \to 0} (\psi(x_0+x,y_0) - \psi(x_0,y_0)  \leq 0 ,
	$
	which implies
	$
	\vartheta(x) \leq \frac{C_v}{2} |x|^2 .
	$
	
	Let us now take $x = h\alpha^{*}$, then
	\begin{equation}\label{2vh-h-h-1}
		v^h(x_0 + h\alpha^{*}) \leq v^h(x_0) + (D\theta_{\varepsilon}(x_0 - y_0) + \zeta D_x\xi(x_0,y_0))\cdot h\alpha^{*} + \frac{C_v}{2}h^2 \enspace.
	\end{equation}
	Since $v^h$ is the solution of system \eqref{discre-hjb}, we have
	\begin{equation}\label{2vh-h-1}
		v^{h}(x_0) 
		\leq \left\{ (1-\frac{h}{f(x_0, \alpha^{*})}) v^h(x_0 + h \alpha^{*}) + \frac{h}{f(x_0,\alpha^{*})} \right\} \ .
	\end{equation}
	Combining \eqref{2vh-h-h-1} and \eqref{2vh-h-1}, we have
	\begin{equation}
		v^h(x_0) \leq (1-\frac{h}{f(x_0,\alpha^{*})}) (v^h(x_0) + (D\theta_{\varepsilon}(x_0 - y_0) + \zeta D_x\xi(x_0,y_0))\cdot h\alpha^{*} + \frac{C_v}{2} h^2  ) +  \frac{h}{f(x_0,\alpha^{*})} \ ,  
	\end{equation}
	which implies
	\begin{equation}\label{2vh-h-h-3}
		v^h(x_0) \leq (1-\frac{h}{f(x_0,\alpha^{*})})( D\theta_{\varepsilon}(x_0 - y_0) + \zeta D_x\xi(x_0,y_0))\cdot \alpha^{*} + \frac{C_v}{2} h  )  f(x_0,\alpha^{*}) + 1 \enspace.
	\end{equation}
	Combining \eqref{2vh-y0} and \eqref{2vh-h-h-3}, we have:
	\begin{equation}\label{2vh-2}
		\begin{aligned}
			v^h(x_0) - v(y_0) &\leq  (1-\frac{h}{f(x_0,\alpha^{*})})( D\theta_{\varepsilon}(x_0 - y_0) + \zeta D_x\xi(x_0,y_0))\cdot \alpha^{*} + \frac{C_v}{2} h  )f(x_0,\alpha^{*})  \\
			&\quad - ((D\theta_{\varepsilon}(x_0-y_0) \cdot \alpha^{*}- \zeta D_y\xi(x_0,y_0))\cdot \alpha^{*}) f(y_0,\alpha^{*}) \\
			&\leq (f(x_0,\alpha^{*}) - f(y_0,\alpha^{*})) \frac{2|x_0-y_0|}{\varepsilon^2} + \frac{2|x_0-y_0|h}{\varepsilon^2} + \frac{C_v}{2} \upperboundf h + 2 \zeta \upperboundf\\
			&\leq \frac{2L_f|x_0-y_0|^2}{\varepsilon^2} + \frac{2|x_0-y_0|h}{\varepsilon^2} + \frac{C_v}{2} \upperboundf h + 2 \zeta \upperboundf \enspace.
		\end{aligned}
	\end{equation}
	Take $x=y=x_0$ for $\varphi(x,y)$, we obtain 
	$|x_0-y_0| \leq (L_v + \zeta) \varepsilon^2$.
	Thus, for \eqref{2vh-2} we have
	\begin{equation}\label{vhx-vy_1}
		v^h(x_0) - v(y_0) \leq (2L_f (L_v + \zeta)^2 \varepsilon^2 + 2(L_v+\zeta) h + \frac{C_v}{2} \upperboundf h) + 2\upperboundf \zeta \ .
	\end{equation}
	Take now $\varepsilon = h^{\frac{1}{2}}$ in~\eqref{vhx-vy_1}, 
	we then have
	\begin{equation}\label{vhx-vy_2}
		v^h(x_0) - v(y_0) \leq ( 2L_f (L_v + \zeta)^2+ 2(L_v +\zeta) +\frac{C_v}{2} \upperboundf ) h + 2\upperboundf \zeta  \ .
	\end{equation}
	Take $x=y$ for $\psi(x,y)$ and use the fact that $\psi(x_0,y_0) \geq \psi(x,x)$, we have
	\begin{equation}\label{vhx-vx}
		v^h(x) - v(x) \leq (v^h(x_0) -v(y_0)) + \zeta (\xi(x_0,y_0) -\xi(x,x)) -\frac{|x_0-y_0|^{2}}{\varepsilon^2} \ .
	\end{equation}
	Thus, combining~\eqref{vhx-vy_2} and~\eqref{vhx-vx}, we have 
	\begin{equation}\label{essvh}
		v^h(x) - v(x) \leq ( 2L_f (L_v+\zeta)^2+ 2(L_v+\zeta) +\frac{C_v}{2} \upperboundf ) h + (2 \upperboundf -1) \zeta \ .
	\end{equation}
	Taking $\zeta \to 0$ in~\eqref{essvh}, we have
	\begin{equation}\label{vhvess}
		v^h(x) - v(x) \leq (2L_f L_v^2   + 2L_v + \frac{C_v}{2} \upperboundf ) h \enspace.
	\end{equation}
	Combining~\eqref{vvhess} and~\eqref{vhvess}, we conclude that~\eqref{rate1_v} holds with $C_1 = \max\{\frac{L_f}{2 \lowerboundf^2}, 2L_f L_v^2   + 2L_v + \frac{C_v}{2} \upperboundf \}$.
\end{proof}

\section{Convergence of a Full Discretization Scheme, Application to Convergence Rate Analysis of Fast-Marching Method}\label{sec-fulldisfm}

In this section, we first present a full discretization semi-Lagrangian scheme for system~\eqref{HJ}. We demonstrate the convergence rate of the scheme using particular interpolation operators. We then apply this result to show the convergence rate of a fast-marching method, for which the update operator is obtained by applying the presented scheme to eikonal equation.

We shall mention that in previous works on the fast-marching method, in~\cite{sethian2003ordered,cristiani2007fast,carlini2008convergence,Mir14a}, the authors proved the convergence of their methods as the mesh step tends to $0$ without providing an explicit convergence rate, whereas in~\cite{shum2016,mirebeau2019riemannian} the authors establish a convergence rate of order $\frac{1}{2}$ in terms of mesh step. Though, most numerical experiments in the aforementioned works reveal an actual convergence rate of order $1$.

\subsection{A Full Discretization Scheme and a First Convergence Analysis}

To get the numerical approximation of~\eqref{HJ}, we also need to discretize the space. Assume now given a discrete subset $X^h$ of $\R^d$.	
Let us denote $w^h$ the approximate value function for $v$ obtained by applying the semi-Lagrangian scheme~\eqref{discre-hjb} to all grid nodes $x \in X^h$, while when 
the points $x + h \alpha$ are not in the grid $X^h$, we compute the value of $w^{h}(x + h \alpha)$ by an interpolation of the value of it's neighborhood nodes. We assume given an interpolation operator to be used in~\eqref{discre-hjb} when $x \in X^h $. This interpolation may depend on $x$ (this the index of equation), 
and will be denoted by $I^x[\cdot]$. However the value $I^x[w^h](x')$ depends only on the values $w^h(y)$ with $y\in X^h$ in a neighborhood of $x'$. In~\Cref{sec-interpop}, we give a particular piecewise linear operator, defined in a regual mesh grid.

Let us consider the following full discretization semi-Lagrangian scheme, define $z^h : X^h \to \R$ by 
\begin{equation}\label{fully_dis_sl}
	\left\{
	\begin{aligned}
		&z^{h}(x) = \inf_{\alpha \in S_1} \left\{ (1 - \frac{h}{f(x,\alpha)}) I^x[ z^h] (x + h  \alpha)  + \frac{h}{f(x,\alpha)}   \right\}  , &\ x \in X^h \setminus \K \enspace  ,\\
		&z^{h}(x) = 0, & x \in X^h \cap \K \enspace .
	\end{aligned}
	\right.
\end{equation}

Let now $X^h$ be a grid with mesh step $h$. 
For fixed $x \in \R^d$ and $\alpha \in S_1$, let $Y^h(x+ h \alpha) = \{ y_k\}_{k=1\dots d+1}$ denotes a subset of $X^h$. 
We consider particular interpolation operators in~\eqref{fully_dis_sl} that compute the value of $ I^x[ z^h] (x + h  \alpha)$ by convex combination of the values $\{z^h (y_k) \}_{y_k \in Y^h(x+h \alpha)}$. 
Moreover, denote 
\begin{subequations}\label{p1_interpolation}
	\begin{equation}\label{p1_inter}
		I^x[z^h](x+ h \alpha) = \sum_{y_k \in Y^h(x+ h \alpha)}\lambda(x, \alpha,  y_k) z^h(y_k) \ ,
	\end{equation}
	we assume the coefficients $\lambda(x,\alpha, y_k)$ satifify the following condition: 
	\begin{equation}\label{inter_coefficient}
		\left\{
		\begin{aligned}
			& 0 \leq \lambda(x,\alpha, y_k) \leq 1,  \text{ for every } y_k \ , \\
			& \sum_{y_k \in Y^h(x + h \alpha)} \lambda(x, \alpha, y_k) =1 \text{ and }  \sum_{y_k \in Y^h(x + h \alpha)} \lambda(x, \alpha, y_k) y_k = x + h \alpha \ .
		\end{aligned}
		\right.
	\end{equation}
\end{subequations}
Denote $T^h: L^{\infty}(\R^d) \to L^{\infty}(\R^d)$ the operator defined as follows, for every $x \in \R^d$:
\begin{equation}\label{operator_T}
	\left\{
	\begin{aligned}
		&T^h[v^h](x) := \inf_{\alpha \in S_1} \left\{ (1 - \frac{h}{f(x,\alpha)}) v^h(x + h \alpha) + \frac{h}{f(x,\alpha)} \right\}, & x \in \R^d \setminus \K \ , \\
		&T^h[v^h](x) :=0 , & x \in \K \ .
	\end{aligned}
	\right.
\end{equation}
The solution $v^h$ of the time discretization semi-Lagrangian scheme~\eqref{discre-hjb} is indeed a fixed point of $T^h$. 
Similarly let us denote $\overline{T}^h : L^{\infty}(X^h) \to L^{\infty}(X^h)$ the operator associated with the full discretization scheme~\eqref{fully_dis_sl} such that $z^h$ is a fixed point of $\overline{T}^h$. We first state the following technical lemma, which is needed to present our result.
\begin{lemma}[See for instance~\cite{akian2016uniqueness,akian2016minimax}]\label{topnorm}
	Given a $X \subseteq \R^d$, recall that the $\tp-$norm of a function $g: X \to \R$ is defined by 
	\begin{equation}
		\tp(g) = \max\left\{ \sup_{x \in X} \big( g (x) \big), 0\right\} \ . 
	\end{equation} 
	Then both $T^h$ and $\bar{T}^h$ are contraction mappings, in the sense of $\tp-$ norm, in $L^\infty(\R^d)$ and $L^\infty(X^h)$ respectively, with same contracting rate $(1-\frac{h}{\upperboundf})$.
\end{lemma}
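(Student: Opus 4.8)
The plan is to extract the two order-theoretic features of $T^h$ and $\overline{T}^h$ responsible for the contraction — \emph{monotonicity} and \emph{additive $(1-\tfrac{h}{\upperboundf})$-subhomogeneity} — and to check that both survive the infimum over $\alpha\in S_1$. Throughout, ``contraction in the sense of the $\tp$-norm with rate $\gamma$'' is read as $\tp\!\left(\Phi[u]-\Phi[v]\right)\le\gamma\,\tp(u-v)$. Fix $x\in\R^d\setminus\K$ and $\alpha\in S_1$ and set $\Phi_{x,\alpha}[g]:=\big(1-\tfrac{h}{f(x,\alpha)}\big)g(x+h\alpha)+\tfrac{h}{f(x,\alpha)}$; for $\overline{T}^h$ one replaces $g(x+h\alpha)$ by $I^x[g](x+h\alpha)=\sum_{y_k}\lambda(x,\alpha,y_k)g(y_k)$, which by~\eqref{inter_coefficient} is again a convex combination of values of $g$. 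For $h$ small enough that the scheme is monotone, i.e. $0\le 1-\tfrac{h}{f(x,\alpha)}$, one has $0\le 1-\tfrac{h}{f(x,\alpha)}\le 1-\tfrac{h}{\upperboundf}<1$ since $f(x,\alpha)\le\upperboundf$. Hence $\Phi_{x,\alpha}$ is (i) monotone, being a nonnegative combination of point evaluations, and (ii) for every constant $c\ge 0$ satisfies $\Phi_{x,\alpha}[g+c]=\Phi_{x,\alpha}[g]+\big(1-\tfrac{h}{f(x,\alpha)}\big)c\le\Phi_{x,\alpha}[g]+\big(1-\tfrac{h}{\upperboundf}\big)c$.

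Now $T^h[g](x)=\inf_{\alpha\in S_1}\Phi_{x,\alpha}[g]$ on $\R^d\setminus\K$ and $T^h[g]\equiv 0$ on $\K$; property (i) passes to the infimum, giving $g_1\le g_2\Rightarrow T^h[g_1]\le T^h[g_2]$, and property (ii) passes as well, since for $c\ge 0$ we have $T^h[g+c](x)=\inf_\alpha\big(\Phi_{x,\alpha}[g]+(1-\tfrac{h}{f(x,\alpha)})c\big)\le\inf_\alpha\Phi_{x,\alpha}[g]+\big(1-\tfrac{h}{\upperboundf}\big)c=T^h[g](x)+\big(1-\tfrac{h}{\upperboundf}\big)c$. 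The identical computation applies to $\overline{T}^h$ on $X^h$, so both operators are monotone and additively $(1-\tfrac{h}{\upperboundf})$-subhomogeneous. To conclude, let $u,v\in L^\infty(\R^d)$ and put $c:=\tp(u-v)\ge 0$; by definition of $\tp$ we have $u-v\le c$ pointwise, i.e. $u\le v+c$, so monotonicity and then subhomogeneity give $T^h[u]\le T^h[v+c]\le T^h[v]+\big(1-\tfrac{h}{\upperboundf}\big)c$, hence $\sup_x\big(T^h[u](x)-T^h[v](x)\big)\le\big(1-\tfrac{h}{\upperboundf}\big)\tp(u-v)$. As the right-hand side is nonnegative this is exactly $\tp\!\left(T^h[u]-T^h[v]\right)\le\big(1-\tfrac{h}{\upperboundf}\big)\tp(u-v)$, and the same argument over $L^\infty(X^h)$ settles $\overline{T}^h$.

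There is no real analytic obstacle here; the point that needs care is that $\tp$ is a one-sided, non-symmetric gauge, so the argument may use only the upper bound $u-v\le\tp(u-v)$ together with $\tp(u-v)\ge 0$. It is precisely at this step that monotonicity of the scheme — the nonnegativity of $1-\tfrac{h}{f(x,\alpha)}$ — is indispensable: without it, multiplying $u(x+h\alpha)-v(x+h\alpha)$ by that coefficient would reverse the inequality, and no matching lower bound on $u-v$ is at hand.
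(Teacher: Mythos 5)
Your proof is correct and is essentially the argument the paper delegates to its citations: the paper gives no proof of \Cref{topnorm}, and the standard route in \cite{akian2016uniqueness,akian2016minimax} is exactly the one you take, namely monotonicity plus additive $(1-\tfrac{h}{\upperboundf})$-subhomogeneity of the one-step operator, both preserved under the infimum over $\alpha$ and under the convex-combination interpolation, combined with the one-sided inequality $u\le v+\tp(u-v)$. The only point worth recording is that the monotonicity you (rightly) flag as indispensable requires $h\le \lowerboundf$, which is not literally implied by the paper's standing hypothesis $0<h<\tfrac{1}{\upperboundf}$ — but that is an imprecision of the paper, not of your argument.
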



In the following, we intend to bound the $L^\infty$ norm between $z^h$ and the restriction of $v^h$ on $X^h$. We begin by showing $z^h-v^h$ in one direction. Let $R_h: \R^{\R^d} \to \R^{X^h}$ be the restriction.
\begin{proposition}\label{rate_wh-v}
Suppose~\Cref{assp1} and~\Cref{assump_2} hold. There exists a constant $C_{z_1}$ depends on $M_f, M_t, \upperboundf, \lowerboundf$ such that, for every $0<h<\frac{1}{\upperboundf}$
\begin{equation}\label{rate_wh1}
	\sup_{x \in X^h} (z^h - \R_h[v^h]) (x) \leq C_{z_1} h \ .
\end{equation}
\end{proposition}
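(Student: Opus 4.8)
The plan is to control the defect created when the full-discretization operator $\overline{T}^h$ of \eqref{fully_dis_sl} is applied to the restriction $R_h[v^h]$, and then to convert this local defect into the global bound \eqref{rate_wh1} by a supersolution argument together with the $\tp$-contraction of \Cref{topnorm}. The key ingredient is an interpolation estimate: for $x\in X^h\setminus\K$ and $\alpha\in S_1$ for which the point $x+h\alpha$ and its stencil $Y^h(x+h\alpha)$ stay in $\R^d\setminus\K$,
\[
I^x[R_h[v^h]](x+h\alpha)=\sum_{y_k\in Y^h(x+h\alpha)}\lambda(x,\alpha,y_k)\,v^h(y_k)\le v^h(x+h\alpha)+\frac{C_v}{2}\sum_{k}\lambda(x,\alpha,y_k)\,\|y_k-(x+h\alpha)\|^2 .
\]
This is essentially the content of the semiconcavity bound of \Cref{semiconcave-value}: along any simplex contained in $\R^d\setminus\K$, the second-difference inequality $v^h(p+z)-2v^h(p)+v^h(p-z)\le C_v\|z\|^2$ gives, for convex weights $\lambda_k$ with barycentre $\bar y=\sum_k\lambda_k y_k$, the Jensen-type inequality $\sum_k\lambda_k v^h(y_k)\le v^h(\bar y)+\tfrac{C_v}{2}\sum_k\lambda_k\|y_k-\bar y\|^2$; here $\bar y=x+h\alpha$ by the barycentric constraint in \eqref{inter_coefficient}. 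Since the nodes of $Y^h(x+h\alpha)$ sit in a single cell of the grid $X^h$, they lie within distance $\sqrt{d}\,h$ of $x+h\alpha$, so the right-hand side is at most $v^h(x+h\alpha)+\tfrac{C_v d}{2}h^2$.

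Granting this, and using that $I^x[R_h[v^h]]$ depends only on grid values of $v^h$ and that the weights $1-\tfrac{h}{f(x,\alpha)}$ lie in $[0,1]$, one obtains for $x\in X^h\setminus\K$
\[
\overline{T}^h[R_h[v^h]](x)\le\inf_{\alpha\in S_1}\Big\{\big(1-\tfrac{h}{f(x,\alpha)}\big)v^h(x+h\alpha)+\tfrac{h}{f(x,\alpha)}\Big\}+\frac{C_v d}{2}h^2=v^h(x)+\frac{C_v d}{2}h^2 ,
\]
since $v^h$ is the fixed point of $T^h$ from \eqref{operator_T}, while both sides vanish on $X^h\cap\K$. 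Because adding a constant $\gamma$ to the argument of $\overline{T}^h$ adds $(1-\tfrac{h}{f(x,\alpha)})\gamma$ inside the infimum (the interpolant of a constant being that constant), the choice $\gamma:=\tfrac{\upperboundf\,C_v d}{2}h$ makes $(1-\tfrac{h}{\upperboundf})(\tfrac{C_v d}{2}h^2+\gamma)\le\gamma$, hence $u:=R_h[v^h]+\gamma$ satisfies $\overline{T}^h[u]\le u$ on $X^h$: it is a supersolution. By monotonicity of $\overline{T}^h$ the iterates $(\overline{T}^h)^n[u]$ are nonincreasing, and by \Cref{topnorm} one has $\tp\big(z^h-(\overline{T}^h)^n[u]\big)\le(1-\tfrac{h}{\upperboundf})^n\tp(z^h-u)\to 0$; letting $n\to\infty$ gives $z^h\le u$ pointwise, i.e. \eqref{rate_wh1} with $C_{z_1}$ of the stated dependence (the ambient dimension entering as a fixed multiplicative factor).

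The main obstacle is the layer of grid nodes whose stencils touch the target. The semiconcavity of \Cref{semiconcave-value} is available only on $\R^d\setminus\K$, whereas when $d_\K(x+h\alpha)\le\sqrt{d}\,h$ some nodes $y_k$ fall inside $\K$, where $v^h(y_k)=0$ and the extension of $v^h$ by $0$ has a convex corner along $\partial\K$, so the Jensen step breaks down. For the finitely many grid nodes with $d_\K(x)=O(h)$ I would argue directly instead: from the cost formula \eqref{discrete_cost_eq} and the ``head straight for $\K$'' policy one gets $0\le v^h(x)\le\tfrac{d_\K(x)}{\lowerboundf}+O(h)$, and the same reasoning applied to the fully discretized equation gives $0\le z^h(x)\le\tfrac{d_\K(x)}{\lowerboundf}+O(h)$, so $z^h(x)-v^h(x)\le z^h(x)=O(h)$ there; this permits enlarging $\gamma$ to another $O(h)$ constant for which $R_h[v^h]+\gamma$ is a supersolution of $\overline{T}^h$ at every node, near and far from $\K$, and the bootstrap of the previous paragraph applies unchanged. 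Checking that the piecewise-linear stencils of \Cref{sec-interpop} indeed lie in one grid cell, and pinning down the $O(h)$ constants in the boundary layer, are the remaining routine points. (The reverse inequality $R_h[v^h]-z^h\le C h$ is the separate statement \Cref{rate_v-wh}, proved via the Markov-chain interpretation.)
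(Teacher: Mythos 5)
Your proof is correct and takes essentially the same route as the paper's: the consistency error $\overline{T}^h\circ R_h[v^h]-R_h[v^h]=O(h^2)$ is obtained from the semiconcavity of $v^h$ (\Cref{semiconcave-value}) via a Jensen-type inequality on the barycentric interpolation weights of \eqref{inter_coefficient}, and is then converted into an $O(h)$ bound using the $(1-\tfrac{h}{\upperboundf})$-contraction of $\overline{T}^h$ from \Cref{topnorm} (your supersolution bootstrap is equivalent to the paper's direct $\tp$-norm telescoping, both yielding $C_{z_1}=\upperboundf C_v$ up to the stencil-diameter factor). Your additional treatment of the layer of nodes whose stencils meet $\K$, where the semiconcavity estimate is not directly available, addresses a point the paper's proof passes over silently.
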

\begin{proof}
By~\Cref{topnorm}, we have
\begin{equation}\label{zh-vh-1}
	\begin{aligned}
		\tp(z^h - R_h [v^h]) 
		&= \tp( z^h - \bar{T}^h \circ R_h [v^h] + \bar{T}^h \circ R_h [v^h] -  R_h [v^h]) \\ 
		& \leq \tp( \bar{T}^h[z^h] - \bar{T}^h \circ R_h [v^h]) + \tp (\bar{T}^h \circ R_h [v^h] -  R_h [v^h]) \\ 
		&\leq (1- \frac{h}{\upperboundf})\tp(z^h - R_h[v^h])   +   \tp (\bar{T}^h \circ R_h [v^h] -  R_h [v^h]) \ .
	\end{aligned}
\end{equation}
Moreover, using the semiconcavity of $v^h$ obtained in~\Cref{semiconcave-value}, for every $x \in X^h$, we have
\begin{equation}\label{zh-vh-2}
	\begin{aligned}
		&(\overline{T}^h \circ R_h[v^h] - R_h[v^h])(x) \\
		&\leq \max_{ \alpha \in S_1} \left\{ (1-\frac{h}{\upperboundf}) \Big( (I^x \circ R_h [v^h])(x + h\alpha) - (R_h[v^h])(x+h \alpha)\Big) \right\} \\ 
		& = \max_{ \alpha \in S_1} \sum_{y_k \in Y^h(x + h \alpha)} \lambda(x,\alpha,y_k)\Big(v^h(y_k) - v^h(x + h \alpha) \Big) \leq C_v h^2 \ .
	\end{aligned}
\end{equation}
Combining~\eqref{zh-vh-1} and~\eqref{zh-vh-2}, we conclude the result in~\eqref{rate_wh1} with $C_{z_1} = \upperboundf C_v$, with $C_v$ the constant obtained in~\Cref{semiconcave-value}.
\end{proof}
\subsection{Controlled Markov Problem and Its Value Function}
In order to show the error bound in the other direction, we will first reformulate the fully discretized equation~\eqref{fully_dis_sl} as a dynamic programming equation of a stochastic optimal control problem. This interpretation has been used in~\cite{kushner2001numerical,bardi2008optimal}. Notice that, in the formulation~\eqref{inter_coefficient}, the coefficients $\{\lambda(x,\alpha, y_k)\}$ can be interpreted as the transition probabilities for a controlled Markov chain, for which the state space is the set of nodes in $X^h$. 
More precisely, let us rewrite the system~\eqref{fully_dis_sl} as follows:
\begin{equation}\label{fully_dis_sl_prob}
\left\{
\begin{aligned}
	&z^{h}(x) = \min_{\alpha \in S_1} \left\{ (1 - \frac{h}{f(x,\alpha)}) \sum_{y_k \in Y^h(x+\alpha h)} \lambda(x, \alpha,y_k) z^h(y_k)    + \frac{h}{f(x,\alpha)} \right\}, & x \in X^h \setminus \K \ ,\\
	&z^{h}(x) = 0, & x \in X^h \cap \K \ .
\end{aligned}
\right.
\end{equation}
Let us consider a Markov decision process $\{\xi_k\}$ on the state space $X^h$, with controls in $S_1$ and transition probability given by
\begin{equation}\label{transition}
\Pro(\xi_{k+1} = y \mid \xi_k = x, \alpha_k = \alpha) = 
\left\{ 
\begin{aligned}
	&\lambda(x,\alpha,y), \  & \text{ if } y \in Y^h(x + \alpha h) \ , \\ 
	& 0, \ & \text{ otherwise } \ .
\end{aligned}
\right.
\end{equation}
Here, $\xi_k$ denotes the state at time step $k$ and $\alpha_k$ denotes the control at time step $k$. Given a pure strategy, that is a map $\sigma^h$ which to any history $H_k = (\xi_0, \alpha_0, \dots, \xi_{k-1},\alpha_{k-1},\xi_k)$ associates a control $\alpha_k$, we can define a probability space $(\Omega, \omega, \Pro)$ and processes $(\xi_k)_{k\geq 0}$ of states and $(\alpha_k)_{k\geq 0}$ of controls satisfying~\eqref{transition}, and $\alpha_k = \sigma^h(H_k)$. 
Let us denote  $N^h(\sigma^h) = \min\{ n \in N_+ \mid \xi_n \in \K \}$, that is a stopping time adapted to the Markov decision process with strategy $\sigma^h$.
By this formulation, we have the following property for the process $(\xi_k)_{k\geq0}$:
\begin{subequations}\label{proper_xi}
\begin{equation}\label{expec_deltaxi}
	\E [ \xi_{k+1} - \xi_{k} \mid \xi_k = x, \alpha_k = \alpha] = h \alpha, \ \forall x\in X^h, \alpha \in S_1 \text{ and } k < N^h(\sigma^h)
\end{equation}
and
\begin{equation}\label{var_deltaxi}
	\Tra( \Var [ \xi_{k+1} - \xi_{k} \mid \xi_k = x, \alpha_k = \alpha]) \leq h^2, \ \forall x\in X^h, \alpha \in S_1 \text{ and } k < N^h(\sigma^h) \ . 
\end{equation}
\end{subequations}	

Let $E^{\sigma^h}_x$ denote the expectation given a initial condition $x$ and a 
strategy $\sigma^h$. 
Consider the following cost functional:
\begin{equation}\label{cost-sto}
W(\sigma^h,x) = \E^{\sigma^h}_x \left[ 1 - \prod_{k=0}^{N^h(\sigma^h)} \left( 1- \frac{h}{f(\xi_k,\alpha_k)} 
\right)  \right] \ .
\end{equation}
Then the solution of~\eqref{fully_dis_sl_prob} is indeed the value function of above controlled Markov problem (see for instance~\cite{kushner2001numerical}), that is, 
\begin{equation}\label{value_wh}
z^h(x) = \inf  \ W(\sigma^h, x) \ .
\end{equation}

\subsection{Convergence Rate Analysis Under A Semiconvexity Assumption}
To get the convergence rate for the full discretization scheme, we shall need the following regularity assumptions for the target set $\K$ and the speed function $f$.
\begin{assumption}\label{assump_3} $ $
\begin{enumerate}
	\item\label{assump_3_1} There exists a constant $M_f'>0$ such that
	\begin{equation}\label{semiconv_f}
		\frac{1}{f(x+z,\alpha)} - 2\frac{1}{f(x,\alpha)} + \frac{1}{f(x-z,\alpha)}  \geq -M_f' |z|^2, \ \forall x,z \in \R^d, \  \forall \alpha \in S_1  \ . \end{equation}
	\item\label{assump_3_2} There exists a constant $M_t'>0$ such that
	\begin{equation}\label{semiconv_dst}
		d_{\K} (x+z) + d_{\K}(x-z) -2 d_{\K}(x)  \geq -M_t' |z|^2, \ \forall x,z \in \R^d  \setminus \mathring{\K}  \ .\end{equation}
\end{enumerate}
\end{assumption}
The assumptions stated in~\eqref{assump_3_1} and~\eqref{assump_3_2} in~\Cref{assump_3} can be thought of as the semiconvexity properties of the speed function and of the distance function $d_\K$ in $\R^d \setminus \mathring{\K}$, respectively. In particular, if $f$ is of class $\Cc^2$ and $\partial \K$ is of class $\Cc^2$, one can check that both~\Cref{assump_2} and~\Cref{assump_3} hold.

We first state the following technical lemma, which is needed to present our result. 
\begin{lemma}\label{jensen_semiconvex}
Assume $g : \R^d \to \R$ is $\tilde{\alpha}$-semiconvex for $\tilde{\alpha} \geq 0$, then we have 
\begin{equation}\label{bound_gE-Eg}
	g(E[X]) - E[g(X)] \leq \frac{\tilde{\alpha}}{2} \Tra(\Var[X]) \ .
\end{equation}
\end{lemma}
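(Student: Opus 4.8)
The plan is to reduce the claim to the ordinary (multidimensional) Jensen inequality applied to a well-chosen convex auxiliary function. Recall that $g$ being $\tilde\alpha$-semiconvex means precisely that $h(x) := g(x) + \frac{\tilde\alpha}{2}|x|^2$ is convex on $\R^d$; this is the integrated form of the pointwise inequality $g(x+z) + g(x-z) - 2g(x) \geq -\tilde\alpha|z|^2$, and in particular it forces $g$ (hence $h$) to be continuous, indeed locally Lipschitz, so that all the expectations below are well defined — in the intended application $X$ is bounded and so has finite moments of every order.

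First I would apply Jensen's inequality to the convex function $h$ and the integrable random variable $X$, obtaining $h(E[X]) \leq E[h(X)]$, that is
\[
g(E[X]) + \frac{\tilde\alpha}{2}\,|E[X]|^2 \leq E[g(X)] + \frac{\tilde\alpha}{2}\,E\big[|X|^2\big] \ .
\]
Rearranging yields $g(E[X]) - E[g(X)] \leq \frac{\tilde\alpha}{2}\big(E[|X|^2] - |E[X]|^2\big)$. It then remains only to identify the right-hand side: writing $X = (X_1,\dots,X_d)$, the covariance matrix satisfies $\Var[X]_{ij} = E[X_iX_j] - E[X_i]E[X_j]$, so $\Tra(\Var[X]) = \sum_{i=1}^d\big(E[X_i^2]-(E[X_i])^2\big) = E[|X|^2] - |E[X]|^2$, and the bound becomes exactly \eqref{bound_gE-Eg}.

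I do not expect any real obstacle here: the only points requiring a word of care are the identification of the two characterizations of $\tilde\alpha$-semiconvexity (so that the auxiliary function $h$ is genuinely convex and finite, making Jensen applicable), and the elementary identity relating $E[|X|^2]-|E[X]|^2$ to the trace of the covariance matrix. Both are routine, and everything else is a one-line computation.
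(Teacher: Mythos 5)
Your proposal is correct and is essentially identical to the paper's proof: both pass to the convex auxiliary function $g(x)+\frac{\tilde\alpha}{2}\|x\|^2$, apply Jensen's inequality, and identify $E[\|X\|^2]-\|E[X]\|^2$ with $\Tra(\Var[X])$. The only difference is that you spell out the trace identity and the integrability remarks that the paper leaves implicit.
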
 
\begin{proof}
Since $g(x)$ is $\tilde{\alpha}-$semiconvex, we have $g(x) + \frac{\tilde{\alpha}}{2} \|x\|^2$ is convex, then
\begin{equation}
	\E[ g(X) + \frac{\tilde{\alpha}}{2} \|X\|^2 ] \geq g(\E[X])+ \frac{\tilde{\alpha}}{2} \|\E[X]\|^2] \ .
\end{equation}
\eqref{bound_gE-Eg} is then deduced using $E[X^2] - (E[X])^2 = \Var[X]$.
\end{proof}

\begin{corollary}\label{jensen_semiconcave}
Assume $g: \R^d \to \R$ is $\tilde{\alpha}-$semiconcave for $\tilde{\alpha} \geq 0$, then we have
\begin{equation}
	E[g(X)] - g(E[X])  \leq \frac{\tilde{\alpha}}{2} \Tra(\Var[X]) \ .
\end{equation}
\end{corollary}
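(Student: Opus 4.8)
The plan is to reduce directly to \Cref{jensen_semiconvex} by a sign change. Recall that $g:\R^d\to\R$ is $\tilde\alpha$-semiconcave precisely when $x\mapsto g(x)-\frac{\tilde\alpha}{2}\|x\|^2$ is concave, which is equivalent to $x\mapsto (-g)(x)+\frac{\tilde\alpha}{2}\|x\|^2$ being convex, i.e.\ to $-g$ being $\tilde\alpha$-semiconvex. Hence $-g$ satisfies the hypothesis of \Cref{jensen_semiconvex}.

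Applying \Cref{jensen_semiconvex} to the function $-g$ (with the same constant $\tilde\alpha\geq 0$) gives
\begin{equation}
(-g)(E[X]) - E[(-g)(X)] \leq \frac{\tilde\alpha}{2}\Tra(\Var[X]) \ ,
\end{equation}
and since $(-g)(E[X]) - E[(-g)(X)] = E[g(X)] - g(E[X])$ by linearity of the expectation, this is exactly the claimed inequality. There is no real obstacle here; the only thing to be careful about is the bookkeeping of signs in the equivalence between $\tilde\alpha$-semiconcavity of $g$ and $\tilde\alpha$-semiconvexity of $-g$, and the fact that the constant $\tilde\alpha$ (and the nonnegativity assumption $\tilde\alpha\geq 0$) is preserved under this transformation.
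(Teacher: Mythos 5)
Your proposal is correct, and it is exactly the argument the paper intends: the statement is presented as an immediate corollary of \Cref{jensen_semiconvex}, obtained by applying that lemma to $-g$, which is $\tilde\alpha$-semiconvex precisely when $g$ is $\tilde\alpha$-semiconcave. The sign bookkeeping in your reduction is accurate, so nothing further is needed.
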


\begin{proposition}\label{rate_v-wh}
Suppose~\Cref{assp1} and~\Cref{assump_3} hold. There exists a constant $C_{z_2}$ depending on $M_f', M_t', \upperboundf, \lowerboundf$ such that, for every $0<h<\frac{1}{\upperboundf}$
\begin{equation}\label{rate_wh2}
	\sup_{x \in X^h} (v^h - z^h)(x) \leq C_{z_2} h \ .
\end{equation}
\end{proposition}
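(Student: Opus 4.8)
The plan is to show that $R_h[v^h]$ is an \emph{approximate subsolution} of the dynamic programming operator $\overline{T}^h$ of the controlled Markov problem~\eqref{value_wh}, and to conclude via the contraction property in~\Cref{topnorm}. Writing $\tp(R_h[v^h]-z^h)=\tp\big(R_h[v^h]-\overline{T}^h[R_h[v^h]]+\overline{T}^h[R_h[v^h]]-\overline{T}^h[z^h]\big)$, subadditivity of $\tp$ and the contraction estimate $\tp(\overline{T}^h[g]-\overline{T}^h[g'])\le(1-\tfrac{h}{\upperboundf})\tp(g-g')$ give $\tp(R_h[v^h]-z^h)\le\tfrac{\upperboundf}{h}\,\tp\big(R_h[v^h]-\overline{T}^h[R_h[v^h]]\big)$. (Equivalently, one fixes an $\varepsilon$-optimal strategy $\sigma^h$ for $z^h(x)=\inf W(\sigma^h,x)$ and telescopes $\prod_{j=0}^{k-1}\big(1-\tfrac{h}{f(\xi_j,\alpha_j)}\big)v^h(\xi_k)$ along the Markov chain, the geometric series $\sum_k(1-\tfrac{h}{\upperboundf})^k$ of the per-step errors being summable with sum $\upperboundf/h$.) Hence it suffices to prove the one-step bound $v^h(x)-\overline{T}^h[R_h[v^h]](x)\le Ch^2$ for all $x\in X^h\setminus\K$, the case $x\in X^h\cap\K$ being trivial.

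\textbf{One-step bound via semiconvexity of $v^h$.} Since $v^h=T^h[v^h]$, choosing $\alpha^{*}\in S_1$ that attains (up to a vanishing error) the infimum defining $\overline{T}^h[R_h[v^h]](x)$ gives $v^h(x)-\overline{T}^h[R_h[v^h]](x)\le\big(1-\tfrac{h}{f(x,\alpha^{*})}\big)\big(v^h(x+h\alpha^{*})-\sum_{y_k\in Y^h(x+h\alpha^{*})}\lambda(x,\alpha^{*},y_k)v^h(y_k)\big)$. Using $\sum_k\lambda(x,\alpha^{*},y_k)y_k=x+h\alpha^{*}$, the second factor equals $v^h(\E[\xi_{k+1}])-\E[v^h(\xi_{k+1})]$ for the conditional law of $\xi_{k+1}$ given $\xi_k=x$, $\alpha_k=\alpha^{*}$; it is $\le0$ when $x+h\alpha^{*}\in\K$, and otherwise, by~\Cref{jensen_semiconvex} together with the variance bound~\eqref{var_deltaxi}, it is $\le\tfrac{M_v'}{2}\Tra(\Var[\xi_{k+1}\mid\xi_k=x,\alpha_k=\alpha^{*}])\le\tfrac{M_v'}{2}h^2$, as soon as $v^h$ is $M_v'$-semiconvex on $\R^d\setminus\K$. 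This would give~\eqref{rate_wh2} with $C_{z_2}=\tfrac12\upperboundf M_v'$.

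\textbf{Semiconvexity of $v^h$.} It thus remains to establish $v^h(x+z)-2v^h(x)+v^h(x-z)\ge-M_v'|z|^2$ for $x,z\in\R^d\setminus\K$, with $M_v'$ depending on $M_f',M_t',\upperboundf,\lowerboundf,L_f$ (and on the constant $C_v$ of~\Cref{semiconcave-value}). This is the counterpart, in the other direction, of~\Cref{semiconcave-value}. The plan is to take discrete-optimal controls $\alpha^{*}_{x+z}$ and $\alpha^{*}_{x-z}$ for the problems started at $x+z$ and $x-z$, and to build a competitor control $\alpha'$ for the problem started at $x$ whose trajectory tracks the midpoint of the two optimal trajectories as long as neither of them has reached $\K$, and then switches --- mimicking the two cases of the proof of~\Cref{semiconcave-value} --- to following the remaining trajectory, respectively the Euclidean shortest path to $\K$, once one of them enters $\K$. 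Then $2v^h(x)\le2J^h(\alpha',x)$, and expanding $J^h$ through the product formula~\eqref{discrete_cost_eq} and using the semiconvexity of $\tfrac1f$ from~\Cref{assump_3} for the running cost, the semiconvexity of $d_\K$ from~\Cref{assump_3} for the terminal contribution, and the Lipschitz bound on $f$ for the lower-order terms, should yield $2J^h(\alpha',x)\le J^h(\alpha^{*}_{x+z},x+z)+J^h(\alpha^{*}_{x-z},x-z)+M_v'|z|^2$.

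\textbf{Main obstacle.} The delicate point is this last construction. In contrast with~\Cref{semiconcave-value}, where a single control was reused for both perturbed problems, here the two optimal controls must be combined, and since the trajectory started at $x$ moves only along unit directions the midpoint trajectory can only be realized approximately (e.g.\ by moving along the renormalized averaged direction $(\alpha^{*}_{x+z,k}+\alpha^{*}_{x-z,k})/\|\alpha^{*}_{x+z,k}+\alpha^{*}_{x-z,k}\|$). One must show that the resulting per-step mismatch, once weighted by the discount factors appearing in~\eqref{discrete_cost_eq}, accumulates to at most $O(|z|^2)$, which requires controlling how far the two optimal trajectories from $x+z$ and $x-z$ can drift apart; this is exactly where the semiconcavity of $v^h$ obtained in~\Cref{semiconcave-value} re-enters, preventing optimal trajectories from spreading and keeping their mutual distance of order $|z|$ uniformly in time.
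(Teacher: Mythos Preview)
Your reduction to a one-step bound via the $\tp$-contraction is correct, and that bound would indeed follow from semiconvexity of $v^h$ via~\Cref{jensen_semiconvex}. The gap is the semiconvexity itself: your sketch does not establish it, and there is no evident reason for it to hold under~\Cref{assump_3}. The asymmetry with~\Cref{semiconcave-value} is structural: there, a single optimal control $\alpha^*_x$ is reused verbatim for $x\pm z$, so the three discrete trajectories are exact translates of one another and the semiconcavity of $1/f$ and $d_\K$ applies pointwise along them. For semiconvexity you must merge two \emph{different} optimal controls into one, and nothing prevents $\alpha^*_{x+z,k}$ and $\alpha^*_{x-z,k}$ from pointing in nearly opposite directions (e.g.\ when $x\pm z$ lie on either side of a ridge of $v^h$ and head toward distinct portions of $\partial\K$), in which case your renormalized average is ill-defined. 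Semiconcavity of $v^h$ is a statement about its superdifferential; it does not yield stability of optimal trajectories or feedbacks, so your final paragraph attributes to it a consequence it does not have. Even formally, your constant would depend on $L_f$ and $C_v$, while the proposition claims dependence only on $M_f',M_t',\upperboundf,\lowerboundf$.

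The paper bypasses any regularity of $v^h$ beyond what is already known. Fix a strategy $\sigma^h$ for the Markov problem~\eqref{value_wh}, with associated random controls $(\alpha_k)$ and states $(\xi_k)$; for each $\omega$ build the \emph{deterministic} discrete trajectory $\bar\xi_0=x$, $\bar\xi_{k+1}=\bar\xi_k+h\alpha_k(\omega)$, extended after step $N^h(\sigma^h)$ along the Euclidean shortest path to $\K$ if necessary. This is admissible for~\eqref{discre_dyn}, so $v^h(x)\le J^h(\alpha^h(\omega),x)$ pathwise and hence $v^h(x)\le\E^{\sigma^h}_x[J^h]$. One then bounds $\E^{\sigma^h}_x[J^h]-W(\sigma^h,x)$ by comparing the products~\eqref{discrete_cost_eq} and~\eqref{cost-sto} along the coupled trajectories $(\bar\xi_k)$ and $(\xi_k)$, which share the same controls and satisfy $\E[\xi_k-\xi_{k-1}\mid\cdot]=\bar\xi_k-\bar\xi_{k-1}$. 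The semiconvexity hypotheses~\eqref{semiconv_f} and~\eqref{semiconv_dst} are applied via~\Cref{jensen_semiconvex} and~\eqref{var_deltaxi} directly to $1/f(\cdot,\alpha_k)$ and to $d_\K$, giving per-step errors of order $h^2$; taking the infimum over $\sigma^h$ yields~\eqref{rate_wh2}. No semiconvexity of $v^h$ is ever invoked.
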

\begin{proof}
Let us denote $\sigma^{h}$ a strategy for the stochastic control problem. Let $(\Omega, \mathcal{F}, \Pro)$, $(\xi_k)$, $(\alpha_k)$ and $N^h(\sigma^h)$ be defined as above. 
Now, when $\omega \in \Omega$ is fixed together with $(\alpha_k(\omega))_{0\leq k \leq N^h(\sigma^h)}$ the associated control, consider 
a deterministic trajectory $\{\bar{\xi}_k\}_{k = 1,2,\dots}$ such that
\begin{equation}\label{deter-tra}
	\bar{\xi}_0 = x, \quad \bar{\xi}_{k+1} = \bar{\xi}_k + h\alpha_k(\omega) \ ,
\end{equation}
and if $\bar{\xi}_{n} \notin \K$ for all $n \leq N^h(\sigma^h)$, we then take the controls following the straight line 
from $\bar{\xi}_{N^h(\sigma^h)}$ to $\K$. Let us denote $\bar{N}^h(\omega, \sigma^h) = \min \{ n \in \N_+ \mid \bar{\xi}_n \in \K  \}$. 
By this construction, $\{\bar{\xi}_{k}\}_{0 \leq k \leq \bar{N}^h(w,\sigma^h)}$ is indeed a solution of the discrete system~\eqref{discre_dyn}, i.e., $\bar{\xi}_k$ satisfies $\bar{\xi}_k = y^h_{\alpha^h}(x,k)$ for every $k\in \{0,1\dots, \bar{N}^h(\omega, \sigma^h)\}$. Thus, we have
\begin{equation}
	v^h(x) \leq  J^h ( \alpha^h, x) \ . 
\end{equation}
Since this holds for almost all $\omega \in\Omega$, we have 
\begin{equation}
	v^h(x) \leq \E^{\sigma^h}_x[ J^h(\alpha^h,x) ] \ .
\end{equation}
Let us simply denote $N^h(\sigma^h)$ by $N$ and $\bar{N}^h(\omega,\sigma^h)$ by $\bar{N}$ in the following. We have 
\begin{equation}\label{vh-wh}
	\begin{aligned}
		v^h(x) - z^h(x) &\leq \E^{\sigma^h}_x \left[ \left(1 - \prod_{k=0}^{\bar{N}} \Big(1 - \frac{h}{f(\bar{\xi}_k,\alpha_k)} \Big) \right) - \left(  1 - \prod_{k=0}^{N}\Big(1 - \frac{h}{f(\xi_k,\alpha_k)} \Big) \right) \right] \\
		&\leq \E^{\sigma^h}_x \left[ \mathbbm{1}_{\bar{N} \leq N} \left\{ \prod_{k=0}^{\bar{N}} \Big(1 - \frac{h}{f(\xi_k,\alpha_k)}  \Big) - \prod_{k=0}^{\bar{N}} \Big(1 - \frac{h}{f(\bar{\xi}_k,\alpha_k)} \Big) \right\}  \right. \\
		& \qquad \ + \left. \mathbbm{1}_{N<\bar{N}} \left\{ \prod_{k=0}^N\Big( 1-\frac{h}{f(\xi_k,\alpha_k)} \Big) - \prod_{k=0}^N \Big(1 - \frac{h}{f(\bar{\xi}_k,\alpha_k)} \Big) \right. \right. \\
		&\qquad \qquad  + \left. \left. \left( \prod_{k=0}^N\Big(1 - \frac{h}{f(\bar{\xi}_k,\alpha_k)} \Big) \right) \left(1- \prod_{k=N+1}^{\bar{N}}\Big(1 - \frac{h}{f(\bar{\xi}_k,\alpha_k)} \Big)  \right) \right\} \right] \ .
	\end{aligned}
\end{equation}
First notice that 
\begin{equation}\label{bound_dif}
	\begin{aligned}
		&\E^{\sigma^h}_x \left[ \frac{1}{f(\bar{\xi}_k,\alpha_k)} -\frac{1}{{f(\xi_k,\alpha_k)}} \mid k \leq (N \land \bar{N})  \right] \\ 
		&= \E^{\sigma^h}_x \left[\E \Big[ \frac{1}{f(\bar{\xi}_{k-1} + h\alpha_{k-1},\alpha_k)} - \frac{1}{{f(\xi_{k-1} + \xi_k - \xi_{k-1},\alpha_k)}} \right. \\
		&\qquad \qquad \quad \left.\mid \xi_{k-1}, k-1\leq (N \land \bar{N}) \Big] \mid k \leq (N \land \bar{N})   \right]  \\ 
		& \leq \E^{\sigma^h}_x \left[ \frac{1}{f(\bar{\xi}_{k-1} + h\alpha_{k-1},\alpha_k)} - \frac{1}{f( \xi_{k-1}+h\alpha_{k-1},\alpha_k )} \mid k \leq (N \land \bar{N}) \right]  \\
		&\quad+ M_f' (\Tra(\Var [\xi_k - \xi_{k-1}]))  \ ,
	\end{aligned}
\end{equation}
where the last inequality is deduced by~\eqref{expec_deltaxi} and~\Cref{jensen_semiconvex}. 
Then, by induction and by~\eqref{var_deltaxi}, we have
\begin{equation}\label{bound_dif2}
	\E^{\sigma^h}_x \left[ \frac{1}{f(\bar{\xi}_k,\alpha_k)} -\frac{1}{{f(\xi_k,\alpha_k)}} \mid k \leq (N \land \bar{N})  \right] \leq k M_f' h^2 \ .
\end{equation}
Let us now focus on the first part in the summation of~\eqref{vh-wh}, for which we have
\begin{equation}\label{vh-wh-1}
	\begin{aligned}
		&\E^{\sigma^h}_x \left[ \mathbbm{1}_{\bar{N} \leq N} \left\{ \prod_{k=0}^{\bar{N}} \Big(1 - \frac{h}{f(\xi_k,\alpha_k)}  \Big) - \prod_{k=0}^{\bar{N}} \Big(1 - \frac{h}{f(\bar{\xi}_k,\alpha_k)} \Big) \right\}  \right] \\ 
		&\leq \E^{\sigma^h}_x \left[  \mathbbm{1}_{\bar{N} \leq N} \left\{ \Phi_k(\xi_k) \sum_{k=1}^{\bar{N}} \Big(\frac{h}{f(\bar{\xi}_k, \alpha_k)} - \frac{h}{f(\xi_k,\alpha_k)} \Big) \right\} \right] \\
		&\leq h \E^{\sigma^h}_x \left[ \mathbbm{1}_{\bar{N} \leq N} (1 - \frac{h}{\upperboundf})^{\bar{N}} \sum_{k=1}^{\bar{N}} k M_f' h^2  \right] \\
		& \leq \Pro(\mathbbm{1}_{\bar{N} \leq N}) 2 M_f' \upperboundf^2 h \ ,
	\end{aligned}
\end{equation}
where $\Phi_k(\xi_k)$ is a random variable and $\Phi_k(\xi_k) \leq (1 - \frac{h}{\upperboundf})^{\bar{N}}$. 
For the second part in~\eqref{vh-wh}, the first part of the sum is bounded by the same form as computed in~\eqref{vh-wh-1}. 
As for the remaining part, we notice that, by a similar computation as in~\eqref{bound_dif}:
\begin{equation}\label{bound_did}
	\E^{\sigma^h}_x \left[ d_\K(\bar{\xi}_k) - d_\K(\xi_k) \mid k \leq (N\land \bar{N}) \right] \leq k M_t' h^2 . 
\end{equation}
Then, we have 
\begin{equation}\label{vh-wh-2}
	\begin{aligned}
		&\E^{\sigma^h}_x \left[ \mathbbm{1}_{N < \bar{N}} \left\{ \left(\prod_{k=0}^N\Big(1 - \frac{h}{f(\bar{\xi}_k,\alpha_k)} \Big)\right)\left(1- \prod_{k=N+1}^{\bar{N}}\Big(1 - \frac{h}{f(\bar{\xi}_k,\alpha_k)} \Big)  \right) \right\} \right] \\
		&\leq \E^{\sigma^h}_x \left[ \mathbbm{1}_{N < \bar{N}} \left\{(1 - \frac{h}{\upperboundf})^N \sum_{k=N}^{\bar{N}} \Big(\frac{h}{f(\bar{\xi}_k,\alpha_k)} \Big) \right\} \right] \\ 
		& \leq \E^{\sigma^h}_x \left[ \mathbbm{1}_{N < \bar{N}}  (1 - \frac{h}{\upperboundf})^N\frac{1}{\lowerboundf} \Big( d_\K(\bar{\xi}_N) - d_\K(\xi_N) \Big) \right] \\
		& \leq  \E^{\sigma^h}_x \left[ \mathbbm{1}_{N < \bar{N}} (1 - \frac{h}{\upperboundf})^N\frac{1}{\lowerboundf} N M_t' h^2  \right] \\
		& \leq \Pro( \mathbbm{1}_{N < \bar{N}} ) \frac{M'_t}{\lowerboundf} h \ . 
	\end{aligned}
\end{equation}
Combing~\eqref{vh-wh-1} and~\eqref{vh-wh-2}, we have
\begin{equation}
	v^h - z^h \leq \Pro(\mathbbm{1}_{\bar{N}\leq N}) 2M'_f \upperboundf^2 h + \Pro(\mathbbm{1}_{N<\bar{N}}) (2M'_f \upperboundf^2 + \frac{M'_t}{\lowerboundf}) h \leq (2M'_f \upperboundf^2 + \frac{M'_t}{\lowerboundf}) h \ .
\end{equation}
The result in~\eqref{rate_wh2} is then concluded with $C_{z_2} = 2M'_f \upperboundf^2 + \frac{M'_t}{\lowerboundf}$.
\end{proof}

\begin{corollary}\label{coro_zh-v}
Suppose~~\Cref{assp1}, \Cref{assump_2} and~\Cref{assump_3} hold. Take $C_z = \max \{C_{z_1}, C_{z_2}\}$, where $C_{z_1}$ satisfies the condition in~\Cref{rate_wh-v} and $C_{z_2}$ satisfies the condition in~\Cref{rate_v-wh}. For every $0 < h < \frac{1}{\upperboundf}$, we have 
\begin{equation}\label{zh-vh-sup}
	\sup_{x\in X^h} \|z^h(x) -v^h(x)\| \leq C_z h \ .
\end{equation}
\end{corollary}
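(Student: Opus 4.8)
The plan is straightforward, since \Cref{coro_zh-v} is simply the combination of the two one-sided bounds already established. First I would invoke \Cref{rate_wh-v}, which under \Cref{assp1} and \Cref{assump_2} gives the bound $\sup_{x \in X^h} (z^h - R_h[v^h])(x) \leq C_{z_1} h$ for every $0 < h < \frac{1}{\upperboundf}$. Then I would invoke \Cref{rate_v-wh}, which under \Cref{assp1} and \Cref{assump_3} gives the bound $\sup_{x \in X^h} (v^h - z^h)(x) \leq C_{z_2} h$ in the same range of $h$. Since the hypotheses of the corollary assume all three assumptions (\Cref{assp1}, \Cref{assump_2} and \Cref{assump_3}) hold simultaneously, both bounds are valid at once.

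The second step is to combine these into a two-sided estimate. For any $x \in X^h$, one has
\begin{equation*}
	|z^h(x) - v^h(x)| = \max\{ (z^h - R_h[v^h])(x), \ (v^h - z^h)(x) \} \leq \max\{C_{z_1}, C_{z_2}\} h = C_z h \ ,
\end{equation*}
and taking the supremum over $x \in X^h$ yields \eqref{zh-vh-sup}. There is essentially no obstacle here: the work has all been done in the two preceding propositions, and the corollary is a bookkeeping step recording that the constant $C_z = \max\{C_{z_1}, C_{z_2}\}$ controls the full $L^\infty$ distance between the fully discretized solution and the (restriction of the) time-discretized solution. The only minor point to keep in mind is consistency of notation — $R_h[v^h]$ is the restriction of $v^h$ to the grid $X^h$, so that the pointwise difference $z^h(x) - v^h(x)$ appearing in \eqref{zh-vh-sup} is to be read with $v^h$ evaluated at grid points, i.e. as $z^h(x) - R_h[v^h](x)$.

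Combined with \Cref{rate_main}, which bounds $\|v^h - v\|_\infty$ by $C_1 h$, this corollary then yields (by the triangle inequality) an order-$1$ convergence rate for the full discretization scheme in terms of both the time step and the mesh step, which is the headline consequence the authors are after; but that final synthesis presumably appears after the corollary rather than inside its proof.
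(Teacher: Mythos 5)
Your proposal is correct and matches the paper's (implicit) argument: the corollary is stated there without proof precisely because it is the immediate combination of \Cref{rate_wh-v} and \Cref{rate_v-wh}, exactly as you describe. The only point worth noting is the one you already flag yourself, namely that $v^h$ in \eqref{zh-vh-sup} is to be read as $R_h[v^h]$.
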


The following theorem is then a  direct consequence of~\Cref{rate_main} and~\Cref{coro_zh-v}, which we state as the main result of this subsection.
\begin{theorem}\label{convergence_rate_1}
Suppose that \Cref{assp1},~\Cref{assump_2} and ~\Cref{assump_3} hold. There
exists a constant $C_1'$ depends on $L_f,L_v, M_f, M_t, M_f' , M_t', \upperboundf, \lowerboundf$, such that, for every $0<h<\frac{1}{\upperboundf}$:
\begin{equation}\label{rate_1}
	\sup_{x \in X^h}\|z^h (x) - v(x) \| \leq C_1' h \ .
\end{equation}
\end{theorem}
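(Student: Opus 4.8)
The statement to prove, Theorem~\ref{convergence_rate_1}, asserts that $\sup_{x\in X^h}\|z^h(x)-v(x)\|\le C_1'h$ under Assumptions~\ref{assp1}, \ref{assump_2}, and~\ref{assump_3}. The natural approach is simply to combine the two error estimates that have already been established and exploit the triangle inequality, so the proof should be short. First I would write, for every $x\in X^h$,
\[
\|z^h(x)-v(x)\| \le \|z^h(x)-v^h(x)\| + \|v^h(x)-v(x)\|,
\]
where $v^h$ is the solution of the time-discretized scheme~\eqref{discre-hjb}. The first term is controlled by \Cref{coro_zh-v}: since \Cref{assp1}, \Cref{assump_2}, and \Cref{assump_3} all hold, we have $\sup_{x\in X^h}\|z^h(x)-v^h(x)\|\le C_z h$ with $C_z=\max\{C_{z_1},C_{z_2}\}$, valid for all $0<h<\frac1{\upperboundf}$. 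The second term is controlled by \Cref{rate_main}: since \Cref{assp1} and \Cref{assump_2} hold, $\|v^h-v\|_\infty\le C_1 h$ for all $0<h<\frac1{\upperboundf}$; in particular this bound holds at every $x\in X^h$.

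Adding these two bounds gives $\sup_{x\in X^h}\|z^h(x)-v(x)\|\le (C_z+C_1)h$ for every $0<h<\frac1{\upperboundf}$, so the claim follows with $C_1' := C_z+C_1 = \max\{C_{z_1},C_{z_2}\}+C_1$. Tracking the dependencies: $C_1$ depends on $M_f,M_t,L_v,L_f,\upperboundf,\lowerboundf$ (via $C_v$ and the estimates in the proof of \Cref{rate_main}), $C_{z_1}$ depends on $M_f,M_t,\upperboundf,\lowerboundf$, and $C_{z_2}$ depends on $M_f',M_t',\upperboundf,\lowerboundf$; hence $C_1'$ depends on $L_f,L_v,M_f,M_t,M_f',M_t',\upperboundf,\lowerboundf$, exactly as stated.

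There is essentially no obstacle here: the theorem is a bookkeeping corollary of \Cref{rate_main} and \Cref{coro_zh-v}, and all the analytic work — the semiconcavity estimate \Cref{semiconcave-value}, the order-$1$ rate \Cref{rate_main}, the one-sided bounds \Cref{rate_wh-v} and \Cref{rate_v-wh}, and their combination \Cref{coro_zh-v} — has already been done. The only point requiring a word of care is that \Cref{rate_main} bounds $\|v^h-v\|_\infty$ as a function on all of $\R^d$ while we only need it on $X^h$, which is immediate since $X^h\subset\R^d$; and that all three bounds are simultaneously valid on the common range $0<h<\frac1{\upperboundf}$, which is the case. I would therefore keep the proof to two or three lines.
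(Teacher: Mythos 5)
Your proposal is correct and is exactly the paper's argument: the paper states \Cref{convergence_rate_1} as a direct consequence of \Cref{rate_main} and \Cref{coro_zh-v} via the triangle inequality, which is precisely your decomposition with $C_1'=C_z+C_1$. The dependency tracking of the constants is also consistent with the statements of those results.
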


\subsection{Application: Convergence and Complexity of Fast Marching Method}
In this subsection, we will apply our results in the full discretization scheme to analyze the convergence rate and computational complexity of fast marching methods, for which the update operators are derived from a semi-Lagrangian type discretization of the eikonal equation. 
\subsubsection{A Particular Piecewise Linear Interpolation Operator}\label{sec-interpop}
In this section, we will present a specific piecewise linear interpolation operator, 
for the full discretization semi-Lagrangian scheme, 
that leads to an efficient implementation, 
particularly for the isotropic eikonal equation. Notice that computing the minimum in~\eqref{fully_dis_sl} is not trivial, especially when the dimension is high. Moreover, generally, in the $d$ dimensional case, we need at least the value in $d+1$ nodes of the grid, in order to compute the interpolation in one node.
We describe here one possible way to define an interpolation operator and 
to compute the minimum in $\eqref{fully_dis_sl}$, within a regular grid with  space mesh step equal to $h$ in every dimension, i.e.,  $\Delta x_i  = h, \forall i \in \{1,2,\dots,d \}$. This interpolation operator is based on the work of~\cite{cristiani2007fast},
in which the convergence is shown in the isotropic case.

Let $x = (x_{1},x_{2},\dots,x_{d})$ denote a point of $X$.
Roughly speaking, 
the $d-$dimen\-sional space 
is ``partitioned'' into $2^d$ orthants. 
We consider only the open orthants, since their boundaries are negligible. Let us denote
by $V$ the approximate value function
in the grid point $x\in X$. 
The values of the interpolation $I^x[V](x+h\alpha)$
with $\alpha\in S_1$ 
are defined (differently) for $\alpha$ in each orthant, and the minimum value in each orthant is first computed. Then, the minimum will be obtained by further taking the minimum among the values in all orthants. 

Denote by $e_1,\ldots, e_d$ the vectors of the canonical basis of $\R^d$.
We compute the minimum in the positive orthant using $d+1$ nodes: $x^l:=x + he_l, l\in \{1,\dots,d\} $, 
and $x_{+1}:=x+h (e_1+e_2+\dots+ e_d)$. The minimum in other orthants will be computed using the same method. 

The interpolated value function in $x+h\alpha$
with $\alpha$ in the positive orthant of the sphere $S_1$, denoted by $v^{s,1}$, will be given by the linear interpolation of $V(x^1)$, $V(x^2)$,
\dots, $V(x^d)$ and $V(x_{+1})$, which is equal to
\begin{equation}\label{interp_shpere2}
\begin{aligned}
	v^{s,1}(x+h\alpha) 
	&= \sum_{k=1}^d \alpha_k V(x^k)
	+\frac{V(x_{+1})-\sum_{l = 1}^{d}V(x^l)}{d-1}
	\Big( (\sum_{\ell=1}^d \alpha_\ell ) -1\Big)\enspace .
\end{aligned}
\end{equation}
We then use $(\theta_{1},\theta_2,\dots,\theta_{d-1}$), $\theta_k \in (0,\frac{\pi}{2})$, to represent a vector $\alpha\in S_1$ belonging to the positive orthant, that is 
\begin{equation}\label{alpha-theta2}
\alpha_1 =  \cos(\theta_1),     \alpha_2 =  \sin(\theta_1)\cos(\theta_2),  
\dots,  \alpha_d = \sin(\theta_1) \sin(\theta_2)\cdots \sin(\theta_{d-1}) \ .
\end{equation}
This allows one to rewrite \eqref{interp_shpere2} as
a function of $(\theta_1,\theta_2,\dots,\theta_{d-1})$. 
By doing so, one can consider the result of the optimization in
the first equation of \eqref{fully_dis_sl}, with $w^h$ replaced by $V$ and $I$ replaced by $I^x$, restricted to the
positive orthant, as an approximate value of $V(x)$, denoted by $V^1$, and given by:
\begin{equation}\label{fmoperator_s_22}
V^{1}(x)
= \min_{\theta_1,\dots,\theta_{d-1}} \left\{ (1-\frac{h}{f(x,\alpha)})v^{s,1} (x+h \alpha)
+ \frac{h}{f(x,\alpha)} \right\} \enspace.
\end{equation}
Notice that the minimum in equation \eqref{fmoperator_s_22} is easier to compute by taking the minimum first on $\theta_{d-1}$, then $\theta_{d-2}$, until $\theta_{1}$. Indeed,  we notice in \eqref{alpha-theta2}, that only the last two entries of $\alpha$ contain $\theta_{d-1}$. Thus, the minimum of \eqref{fmoperator_s_22} over $\theta_{d-1}$ can be computed separately.
Moreover, in the isotropic case, meaning $f(x,\alpha) \equiv f(x), \forall \alpha \in S_1$, the minimal $\theta_{d-1}$ is independent of
$\theta_1,\ldots \theta_{d-2}$, due to the special form of \eqref{alpha-theta2}
and \eqref{interp_shpere2}.
The iteratively computation over $\theta_{d-2}$ to $\theta_1$ will be the same.

Then, the full discretization scheme, using the interpolation operator described as above, is as follows:
\begin{equation}\label{fully_dis_sl_2}
\left\{
\begin{aligned}
	&V(x_i) = \min_{k \in \{1,2,\dots,2^d\}} V^k(x_i), \quad &x_i \in X^h \cap (\R^d \setminus \K) \ ,  \\ 
	&V(x_i)= 0, & x_i \in X^h \cap \K \enspace .
\end{aligned}
\right.
\end{equation}
\begin{proposition}[Corollary of~\Cref{convergence_rate_1}]\label{propo_spop}
Suppose that \Cref{assp1},~\Cref{assump_2} and ~\Cref{assump_3} hold. Take $I^x$ as in~\eqref{fully_dis_sl_2}. There exists a constant $C_V$ depends on $L_f,L_v, M_f, M_t, M_f', M_t', \upperboundf$, $\lowerboundf$, such that, for every $0<h<\frac{1}{\upperboundf}$:
\begin{equation}
	\sup_{x \in X^h} \| V(x) - v(x) \| \leq C_V h \ .
\end{equation}
\end{proposition}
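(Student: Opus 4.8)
The plan is to obtain the estimate as a direct application of \Cref{convergence_rate_1}: the main point to check is that the piecewise linear operator $I^x$ constructed in \Cref{sec-interpop} is an admissible interpolation operator, that is, it has the form \eqref{p1_inter} with coefficients satisfying \eqref{inter_coefficient}. Since, for $\alpha$ in a fixed open orthant, the scheme \eqref{fully_dis_sl_2} uses the interpolant attached to the $d+1$ grid nodes of that orthant, and since it sets $V\equiv 0$ on $X^h\cap\K$, it is enough to verify the three conditions of \eqref{inter_coefficient} in one orthant, say the positive one, with nodes $x^1,\dots,x^d,x_{+1}$; the outer minimum over the $2^d$ orthants in \eqref{fully_dis_sl_2} is then exactly the minimization over $\alpha\in S_1$ in \eqref{fully_dis_sl} regrouped orthant by orthant, so that $V$ is the fixed point of the operator $\overline{T}^h$ attached to this $I^x$.

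The conditions not involving nonnegativity are routine. Writing $s=\sum_{\ell=1}^d\alpha_\ell$, an elementary rearrangement of \eqref{interp_shpere2} gives
\[
v^{s,1}(x+h\alpha)=\sum_{k=1}^d\Big(\alpha_k-\tfrac{s-1}{d-1}\Big)V(x^k)+\tfrac{s-1}{d-1}\,V(x_{+1})\ ,
\]
so I would take $\lambda(x,\alpha,x^k)=\alpha_k-\frac{s-1}{d-1}$ and $\lambda(x,\alpha,x_{+1})=\frac{s-1}{d-1}$. That these weights sum to $1$ is immediate; the barycentric identity $\sum_k\lambda(x,\alpha,x^k)\,x^k+\lambda(x,\alpha,x_{+1})\,x_{+1}=x+h\alpha$ follows from $x^k=x+he_k$, $x_{+1}=x+h\sum_\ell e_\ell$ and $\lambda(x,\alpha,x^k)+\lambda(x,\alpha,x_{+1})=\alpha_k$; and $\frac{s-1}{d-1}\in[0,1]$ because $1\le s\le\sqrt d$ when $\alpha\ge 0$ and $\|\alpha\|=1$.

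The step I expect to be the real obstacle is the nonnegativity $\lambda(x,\alpha,x^k)\ge 0$ — which the controlled‑Markov interpretation underlying \Cref{convergence_rate_1} genuinely needs, the weights playing the role of transition probabilities — and which amounts to the geometric assertion that $x+h\alpha$ lies in the simplex $\operatorname{conv}\{x^1,\dots,x^d,x_{+1}\}$, equivalently $\frac{s-1}{d-1}\le\min_k\alpha_k$ on the spherical patch $\{\alpha\ge 0:\ \|\alpha\|=1\}$. I would first try to prove this inclusion directly; it holds for $d=2$, but for $d\ge 3$ it can fail near the boundary of the orthant (say for $\alpha$ close to $\frac1{\sqrt2}(e_1+e_2)$ in dimension $3$), so the construction must be refined. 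The natural remedy is to split each open orthant further by sorting the coordinates of $\alpha$ and interpolating on the Kuhn‑type corner simplex with vertices $x,\ x+he_{\pi(1)},\ x+h(e_{\pi(1)}+e_{\pi(2)}),\ \dots,\ x+h\sum_{\ell}e_{\pi(\ell)}$ indexed by the sorting permutation $\pi$; this still uses $d+1$ grid nodes, and now the barycentric coordinates of $x+h\alpha$, namely $1-\alpha_{\pi(1)},\ \alpha_{\pi(1)}-\alpha_{\pi(2)},\ \dots,\ \alpha_{\pi(d-1)}-\alpha_{\pi(d)},\ \alpha_{\pi(d)}$, lie in $[0,1]$ by construction, so \eqref{inter_coefficient} holds; the minimization \eqref{fmoperator_s_22} is then run on each sub‑simplex and minimized over sub‑simplices, with no change in asymptotic cost. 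Once \eqref{inter_coefficient} is verified on every sub‑region, \Cref{convergence_rate_1} applies with $X^h$ the regular grid of mesh step $h$ and gives $\sup_{x\in X^h}\|V(x)-v(x)\|\le C_V h$, the constant $C_V$ being of the same form as the constant $C_1'$ there, up to a dimensional factor accounting for the number of sub‑simplices. That $V$ from \eqref{fully_dis_sl_2} is the fixed point of $\overline{T}^h$ for this (refined) operator, and the trivial matching on $X^h\cap\K$, are then bookkeeping.
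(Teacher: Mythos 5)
Your reduction is exactly the one the paper intends: \Cref{propo_spop} is stated as a corollary of \Cref{convergence_rate_1} with no further argument supplied, so the entire content of a proof is the verification that the operator of \eqref{interp_shpere2}--\eqref{fully_dis_sl_2} fits the framework \eqref{p1_interpolation}. Your identification of the weights $\lambda(x,\alpha,x^k)=\alpha_k-\tfrac{s-1}{d-1}$ and $\lambda(x,\alpha,x_{+1})=\tfrac{s-1}{d-1}$, and your check of the partition-of-unity and barycentric identities, are correct.

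More importantly, the obstruction you flag is real, and it is a gap in the paper's statement rather than in your argument: for $d\ge 3$ the spherical cap $\{\alpha\ge 0,\ \|\alpha\|=1\}$ is not contained in $\operatorname{conv}\{x+he_1,\dots,x+he_d,x+h(e_1+\cdots+e_d)\}$ (for $\alpha$ near $\tfrac{1}{\sqrt2}(e_1+e_2)$ in $\R^3$ one gets $\lambda(x,\alpha,x^3)\approx-\tfrac{\sqrt2-1}{2}<0$, and this persists in the open orthant), so \eqref{inter_coefficient} fails. Without nonnegative weights, neither the monotonicity and contraction of $\overline{T}^h$ in \Cref{topnorm}, nor the semiconcavity step \eqref{zh-vh-2} in \Cref{rate_wh-v}, nor the controlled-Markov interpretation behind \Cref{rate_v-wh} is available, so the proposition as literally stated is only covered for $d=2$. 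Your Kuhn-triangulation repair does restore \eqref{inter_coefficient} and hence the conclusion, but you should record two caveats. First, the corner simplex includes $x$ itself as a node, with weight $1-\alpha_{\pi(1)}\in[0,1-1/\sqrt d]$; the fixed-point equation at $x$ then becomes implicit (still uniquely solvable, since $(1-h/f(x,\alpha))(1-\alpha_{\pi(1)})<1$), and the Markov chain acquires a self-loop whose probability is bounded away from $1$, which is harmless for the stopping-time estimates but should be said. Second, the variance bound \eqref{var_deltaxi} degrades to $(\sqrt d-1)h^2$ (this is already the case for the paper's own weights when $d\ge 3$), so the constants in \Cref{rate_v-wh} pick up a dimensional factor, consistent with your remark that $C_V$ differs from $C_1'$ only by such a factor. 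With these points made explicit, the estimate follows from \Cref{convergence_rate_1} for the modified operator; the proposition should be restated accordingly, or restricted to $d=2$ for the operator of \eqref{interp_shpere2}.
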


\subsubsection{The Fast-Marching Method and Its Convergence}\label{sec-fm}
We briefly recall the fast marching method introduced by Sethian~\cite{sethian1996fast} and Tsitsiklis~\cite{tsitsiklis1995efficient}, which is one of the most effective numerical methods to solve the eikonal equation. 
Its initial idea takes advantage of the property that the evolution of the domain encircled by the front is monotone non-decreasing, thus one is allowed to only focus on the computation around the front at each iteration.
Generally, it has computational complexity (number of arithmetic operations) in the order of $K_d M \log (M)$  in a $d$-dimensional grid with
$M$ points 
(see for instance~\cite{sethian1996fast,cristiani2007fast}), 
where the constant $K_d$ depends on the discretization scheme.

The fast marching method is searching the nodes of $X$ according to a special ordering and computes the approximate value function in just one iteration. The special ordering 
is constructed in such a way that the value function is monotone non-decreasing in the direction of propagation. This construction is done by dividing the nodes into three groups (see below figure): \Far, which contains the nodes that have not been searched yet; \Accepted, which contains the nodes
at which
the value function has been already computed and settled -- by the monotone property, in the subsequent search, we do not need to update the value function of those nodes; and \Narrow, which contains the nodes "around" the front -- at each step, the value function is updated only at these nodes.
\begin{figure}[H]
\centering
\includegraphics[width=0.7\textwidth]{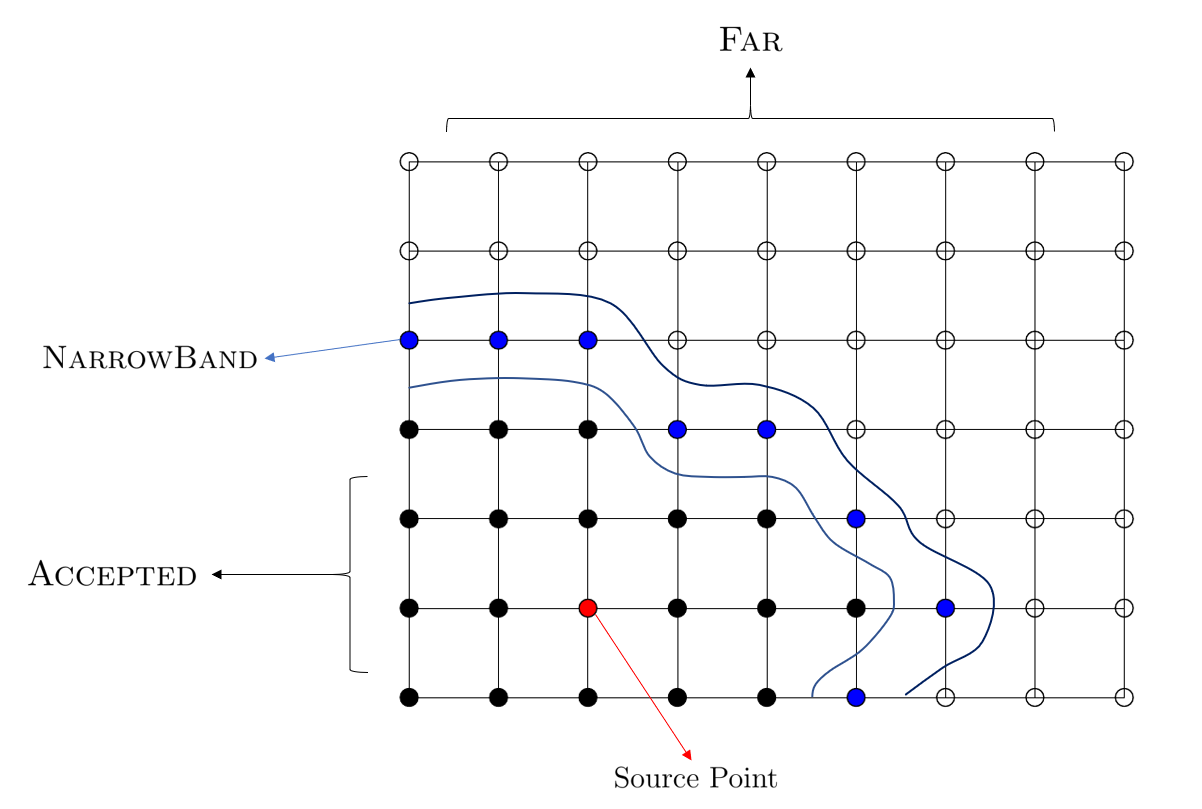}
\label{fmfigure}
\end{figure}
At each step, the node in \Narrow  with the smallest value is added into
the set of \Accepted nodes, and then the \Narrow and the value function over \Narrow  are updated, using the value of the last accepted node. The computation is done by appying an update operator $\mathcal{U}: (\R \cup \{+\infty\})^{X} \to (\R \cup \{+\infty\})^{X}$, which is based
on the discretization scheme.
The classical update operators are based on finite-difference (see for instance \cite{sethian1996fast}) or semi-lagrangian discretizations (see for instance \cite{cristiani2007fast}).
Sufficient conditions on the update operator $\mathcal{U}$ 
for the convergence of the fast marching algorithm 
are that the approximate value
function on $X$ is the unique fixed point of $\mathcal{U}$
satisfying the boundary conditions,
and that  $\mathcal{U}$ is monotone and causal \cite{sethian1996fast}.

A generic partial fast marching algorithm is given in \Cref{fmalgo}.

\begin{algorithm}[htbp]
\caption{Fast Marching Method (compare with \cite{sethian1996fast,cristiani2007fast,mirebeau2019riemannian}). } 
\label{fmalgo}
\hspace*{0.02in} {{\bf Input:} Mesh grid $X$; Update operator $\mathcal{U}$. Set of nodes in target set \Start. }  \\
\hspace*{0.02in} {{\bf Output:} Approximate value function $V$ and \Accepted set.}  \\
\hspace*{0.02in} {{\bf Initialization:} Set $V(x) = +\infty,\forall x \in X$. Set all nodes as \Far. }
\begin{algorithmic}[1]
\State Add \Start to \Accepted, add all neighborhood nodes to \Narrow. 
\State Compute the initial value $V(x)$ of the nodes in \Narrow.
\While {(\Narrow \ is not empty)}
\State Select $x^{*}$ having the minimum value $V(x^{*})$ among the \Narrow \ nodes.
\State Move $x^{*}$ from \Narrow \ to \Accepted.  
\For{All nodes $y$ not in \Accepted, such that $\mathcal{U}(V)(y)$ depends on $x^{*}$}
\State $V(y) = \mathcal{U} (V)(y)$
\If y is not in \Narrow
\State Move $y$ from \Far \ to \Narrow.
\EndIf
\EndFor
\EndWhile
\end{algorithmic}
\end{algorithm}

Let us now consider the fast-marching method with a particular update operator as described in~\eqref{fully_dis_sl_2}. I.e., we set the input update operator in~\Cref{fmalgo} as follows:
\begin{equation}\label{update_fm}
\mathcal{U}(V)(x) := \min_{k\in\{1,2,\dots,2^d\}} V^k(x) \ ,
\end{equation}
where $V^k$ is defined similarly as in~\eqref{fmoperator_s_22}. Then, we have the following result:
\begin{proposition}\label{rate_fm}$ $
\begin{enumerate}
\item\label{rate_fm_1} Suppose~\Cref{assp1},~\Cref{assump_2} and~\Cref{assump_3} hold. We have
\begin{equation}\label{converg_fm}
	\sup_{x \in X} \|V(x) - v(x) \| \leq C_V h \ ,
\end{equation}
for every $h \leq \frac{1}{\upperboundf}$. 
\item\label{rate_fm_2} 	In order to get an error between $V$ and the value function of the problem~\eqref{eikonal_eq} less of equal $\varepsilon$, we shall take the mesh grid $h = (C_V)^{-1} \varepsilon$. Then, the total computational complexity of the fast-marching method is $\tilde{O}\Big((2 C_V \varepsilon^{-1})^{d}\Big)$.
\end{enumerate}
\end{proposition}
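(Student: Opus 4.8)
The plan is to reduce everything to results already established. For part~\eqref{rate_fm_1}, the key observation is that the fast-marching method with the update operator $\mathcal{U}$ in~\eqref{update_fm} is a single-pass method that solves \emph{exactly} the fixed-point equation $V = \mathcal{U}(V)$ on $X$ together with the boundary condition $V(x)=0$ on $X\cap\K$. Indeed, by the causality and monotonicity of the semi-Lagrangian update operator built from the piecewise linear interpolation of~\Cref{sec-interpop}, the value returned by \Cref{fmalgo} at each accepted node is never revised and coincides with the solution of~\eqref{fully_dis_sl_2}. Hence $V$ produced by the algorithm is precisely the function denoted $V$ in~\eqref{fully_dis_sl_2}, whose interpolation operator satisfies~\eqref{p1_interpolation}--\eqref{inter_coefficient} by construction (the coefficients $\alpha_k$ and $\tfrac{V(x_{+1})-\sum V(x^l)}{d-1}((\sum\alpha_\ell)-1)$ are nonnegative and sum to $1$, and reproduce the point $x+h\alpha$). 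Therefore \Cref{propo_spop}, itself a corollary of~\Cref{convergence_rate_1}, applies verbatim and yields $\sup_{x\in X}\|V(x)-v(x)\|\le C_V h$ for all $h\le \tfrac{1}{\upperboundf}$, with $C_V$ depending only on $L_f, L_v, M_f, M_t, M_f', M_t', \upperboundf, \lowerboundf$. This is~\eqref{converg_fm}.

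For part~\eqref{rate_fm_2}, I would simply invert the error bound: to guarantee $\sup_{x\in X}\|V(x)-v(x)\|\le\varepsilon$ it suffices, by part~\eqref{rate_fm_1}, to choose $h$ so that $C_V h\le\varepsilon$, i.e. $h=(C_V)^{-1}\varepsilon$ (noting this is $\le\tfrac{1}{\upperboundf}$ once $\varepsilon$ is small enough, which we may assume). The minimum time function $T$ is bounded on the relevant compact region (by $M_f$-type bounds from~\Cref{assp1}, $v$ takes values in $[0,1)$ and the optimal trajectories stay in a bounded ball), so the number $M$ of grid nodes of $X^h$ that the algorithm actually visits is $O((1/h)^d)=O((C_V\varepsilon^{-1})^d)$. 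Plugging $M$ into the generic fast-marching complexity $O(K_d M\log M)$ recalled in~\Cref{sec-fm}, where $K_d$ depends only on the size of the discretization stencil (here $2^d$ orthants, each using $d+1$ nodes), gives total cost $O(K_d\,(C_V\varepsilon^{-1})^d\log(C_V\varepsilon^{-1}))=\widetilde O((2C_V\varepsilon^{-1})^d)$, the factor $2$ absorbing $K_d$ and constants into the base as stated.

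The only genuinely delicate point is the first one: verifying that the interpolation operator of~\Cref{sec-interpop} really falls within the framework of~\eqref{p1_interpolation}--\eqref{inter_coefficient}, so that \Cref{convergence_rate_1} (and hence \Cref{propo_spop}) is legitimately applicable, and that the fast-marching ordering does not alter the computed fixed point. The monotonicity/causality of this particular operator is exactly what was used in~\cite{cristiani2007fast}; I would cite that and check that the convex-combination form~\eqref{interp_shpere2} has nonnegative coefficients on each open orthant (which is where the restriction to open orthants and $\theta_k\in(0,\tfrac\pi2)$ matters). Everything else is bookkeeping: the ``single-pass solves the discrete equation exactly'' property is standard for causal schemes, and the complexity count is the classical heap-based $O(M\log M)$ argument. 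So the proof is short, essentially an application of~\Cref{propo_spop} plus the standard fast-marching complexity bound.
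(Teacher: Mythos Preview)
Your proposal is correct and follows essentially the same approach as the paper's sketch: for \eqref{rate_fm_1} you invoke causality/monotonicity (citing \cite{cristiani2007fast}) so that fast marching solves~\eqref{fully_dis_sl_2} exactly, then apply \Cref{propo_spop}; for \eqref{rate_fm_2} you invert the error bound to set $h$, count $M=O((C_V\varepsilon^{-1})^d)$ grid nodes, and plug into the $O(K_dM\log M)$ complexity with $K_d=O(d\cdot 2^d)$ per-update cost, which is exactly the paper's argument. Your additional remark that one should check the coefficients in~\eqref{interp_shpere2} are genuinely nonnegative on each open orthant is a fair caveat that the paper's sketch also leaves implicit.
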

\begin{skproof}
\eqref{rate_fm_1} is a direct result as $\mathcal{U}$ satisfies the fast marching technique (see for instance~\cite{cristiani2007fast}), and the result in~\Cref{propo_spop}. 

For~\eqref{rate_fm_2}, the choice of the mesh step $h$ is derived from the error estimates in~\eqref{converg_fm}. This results in the presence of $O((C_v \varepsilon^{-1})^d)$ nodes in the grid.
Moreover, one step update using the update operator~\eqref{update_fm} needs $O(d \times 2^d)$ arithmetic operations, and the fast-marching method needs a number of update steps in the order of $O(M\log(M))$ to operate on a grid with $M$ nodes. The result is then concluded.
\end{skproof}

\section{Convergence Under State Constraints, Application to Computational Complexity of The Multilevel Fast-Marching Method}\label{sec-mlfm}

In~\Cref{sec-semilagrangian} and~\Cref{sec-fulldisfm}, we presented convergence results when the state space  is $\R^d$. However, in the context of optimal control problems, it is natural to consider that the state is required to stay within the closure $\Ocb$ of a certain open domain $\Oc$ in $\R^d$, which is also often the case in practical applications. 
This is in particular relevant for numerical approximations, even in unconstrained problems. For computation and implementation, one need to discretize both time and space, requiring the grid to be generated within a specific domain of $\R^d$.

Another goal of this section is to give a sufficient condition for the computational complexity analysis of the multi-level fast marching method of~\cite{akian2023multi}, in which a particular state constraint is introduced, satisfying that the $\delta-$geodesic set between two target sets remains within such a state constraint. 

\subsection{HJB Equation for State Constrained Problems}
Let us first briefly recall the framework of state constrained optimal control problems and HJB equation. For the control problem~{\rm (\ref{dynmsys},\ref{cost},\ref{value})} presented in~\Cref{sec-pre}, we further require that the state $y$ stays inside the domain $\Ocb$,  by considering the following set of controls:
\begin{equation}
\A_{\Oc,x}: = \{ \alpha \in \A \mid y_\alpha(x;s) \in \Ocb, \ \text{for all } s\geq 0 \ \} \ .
\end{equation}
Under~\Cref{assp1}, $\A_{\Oc,x} \neq \emptyset$. We denote $v_{\Oc}$ the value function of the state constrained control problem, that is 
\begin{equation}\label{value_stateconstraint}
v_\Oc(x) = \inf_{\alpha \in \A_{\Oc,x}} J(\alpha(\cdot),x) \ .
\end{equation}
Assume further $\partial \Oc \setminus \partial \K$ is of class $\Cc^1$, $v_\Oc$ is the unique viscosity solution of the following stated constrained HJB equation (see for instance~\cite{soner1986optimal1}, \cite{capuzzo-lions}, and in particular~\cite{akian2023multi} for the same set of  problem):
\begin{equation}\label{HJ_constrained}
\left\{
\begin{aligned}
& F(x, v_\Oc(x), Dv_\Oc(x)) = 0, \enspace & x \in \Oc \  , \\
& F(x, v_\Oc(x), Dv_\Oc(x)) \geq 0, \enspace & x \in \partial \Oc \setminus \partial \K \  , \\
&v_\Oc(x) = 0, \enspace & x \in \partial \K \ .
\end{aligned}
\right.
\end{equation}

\subsection{Convergence of the time discretization scheme}
We first consider a semi-Lagrangian scheme involves only time discretization for the system~\eqref{HJ_constrained}, which is as follows
\begin{equation}\label{discre-hjb-constraint}
\left\{
\begin{aligned}
&v_\Oc^h(x) = \min_{\alpha \in S_{1}} \left\{ \big(1-\frac{h}{f(x,\alpha)}\big) v_\Oc^h(x + h \alpha)  + \frac{h}{f(x,\alpha)}   \right\} , \ & x \in \Ocb \setminus \K  \ ,\\
& v_\Oc^h(x) = 1, & x \notin \Ocb \ ,\\
&v_\Oc^h(x) = 0, \ &x \in  \K \ .
\end{aligned}
\right.
\end{equation} 
Notice that the equation~\eqref{discre-hjb-constraint} also has an interpretation of discrete time optimal control problem, as we formulated in~\Cref{subsec_discre_control}, with state constraint $\Oc$, that is when $x \notin \Ocb$, the cost is $1$ instead of $\frac{h}{f(x,\alpha)}$. 

For the continuous time unconstrained problem~{\rm (\ref{dynmsys},\ref{cost},\ref{value})}, let us define the $\delta-$geodesic set from a point $x \in \R^d \setminus \K$ to $\K$ as follows: for a give $\alpha^\delta$, denote $\tau^\delta$ such that $y_{\alpha^\delta}(x;\tau^\delta) \in \K $:
\begin{equation}\label{delta-geodesic}
\Gamma^\delta_x := \{ y_{\alpha^\delta}(x;s) \mid s \in [0,\tau^\delta], \  J(x,\alpha^\delta(\cdot)) \leq v(x) + \delta \ \} \ .
\end{equation}
Moreover, for the discrete time unconstrained control problem~{\rm (\ref{discre_dyn},\ref{dis_step},\ref{dis_cost},\ref{value_discret})} with initial state $x$, let us denote by $(\alpha^{h,*}_0,\dots, \alpha^{h,*}_{N_x-1} )$ the sequence of discrete optimal controls. We define the interpolation of the discrete optimal trajectory as the following set:
\begin{equation}\label{discret_tra}
\Gamma^{h,*}_x : = \{ y_{\alpha}(x;s) \mid s \in [0,N_x h], \ \alpha(s) = \alpha^{h,*}_{\lfloor \frac{s}{h} \rfloor } \} \ .
\end{equation}  
Then, by~\Cref{rate_main} and in particular the computation in~\eqref{vvhess}, we have that $\Gamma^{h,*}_x$ is included in the $C_1 h-$geodesic set for the continuous problem starting from $x$, that is $\Gamma^{h,*}_x \subseteq \Gamma^{C_1 h}_x$. 
Let us define a subset $\Oc_\delta$ of $\Oc$ as follows: 
\begin{equation}\label{def_Oh}
\Oc_\delta := \{x\in \Oc \mid \Gamma^\delta_x \subset \Oc \   \} \ .
\end{equation}

Then, 
we have the following results:
\begin{theorem}\label{propo-constraint}
Suppose~\Cref{assp1} and~\Cref{assump_2}  
hold. 
Assume that $\Oc_\delta$ is nonempty and contains $\K$.
For every $x \in \Oc_\delta$ and for every $h < \min\{ \frac{\delta}{C_1}, \frac{1}{\upperboundf} \}$, we have $v(x) = v_\Oc(x)$ and $v^h(x) = v^h_\Oc(x)$. Thus, 
\begin{equation}\label{vhconstraint}
|v^h_\Oc(x) - v_\Oc(x) | \leq C_1 h \ ,
\end{equation}
where $C_1$ is the constant obtained in~\Cref{rate_main}. 
\end{theorem}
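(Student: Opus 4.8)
The plan is to show, for $x \in \Oc_\delta$ and $h < \min\{\delta/C_1, 1/\upperboundf\}$, the two identities $v(x) = v_\Oc(x)$ and $v^h(x) = v^h_\Oc(x)$ separately; once these hold, the bound \eqref{vhconstraint} is immediate from $|v^h_\Oc(x) - v_\Oc(x)| = |v^h(x) - v(x)| \le C_1 h$ by \Cref{rate_main}. The guiding principle in both cases is the same: the constraint is inactive along the relevant (near-)optimal trajectories, because those trajectories stay inside $\Oc$ by the very definition of $\Oc_\delta$ together with the geodesic-inclusion facts recorded in the excerpt.

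First I would treat the continuous-time identity $v(x) = v_\Oc(x)$. Since $\A_{\Oc,x} \subseteq \A$, one trivially has $v(x) \le v_\Oc(x)$. For the reverse inequality, fix any $\delta' \in (0,\delta)$ and pick a $\delta'$-optimal control $\alpha^{\delta'}$ for the unconstrained problem starting at $x$, so that $J(x,\alpha^{\delta'}) \le v(x) + \delta'$ and the associated trajectory segment up to the hitting time of $\K$ is contained in $\Gamma^{\delta'}_x \subseteq \Gamma^{\delta}_x \subset \Oc$ (using $\delta' < \delta$ and $x \in \Oc_\delta$); after the hitting time one may extend the control arbitrarily inside $\Ocb$, which is possible by \Cref{assp1} and does not change the cost. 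Hence $\alpha^{\delta'} \in \A_{\Oc,x}$ (up to this harmless extension), giving $v_\Oc(x) \le J(x,\alpha^{\delta'}) \le v(x) + \delta'$; letting $\delta' \to 0$ yields $v_\Oc(x) \le v(x)$.

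Next I would treat the discrete-time identity $v^h(x) = v^h_\Oc(x)$, which is actually cleaner because there is a genuine minimizer. The inequality $v^h(x) \le v^h_\Oc(x)$ follows from the control-problem interpretation of \eqref{discre-hjb-constraint}: any control admissible for the state-constrained discrete problem is admissible for the unconstrained one with cost no larger (outside $\Ocb$ the constrained cost $1$ dominates the unconstrained increment). For the reverse direction, let $(\alpha^{h,*}_0,\dots,\alpha^{h,*}_{N_x-1})$ be the discrete optimal control for the unconstrained problem from $x$, with interpolated trajectory $\Gamma^{h,*}_x$. By the remark following \eqref{discret_tra}, $\Gamma^{h,*}_x \subseteq \Gamma^{C_1 h}_x$, and since $h < \delta/C_1$ we have $C_1 h < \delta$, hence $\Gamma^{C_1 h}_x \subseteq \Gamma^{\delta}_x \subset \Oc$ because $x \in \Oc_\delta$; in particular every grid point $y^h_{\alpha^{h,*}}(x;k)$, $0 \le k \le N_x$, lies in $\Ocb$. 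Therefore $\alpha^{h,*}$ is admissible for the state-constrained discrete problem, the two cost functionals agree along it, and $v^h_\Oc(x) \le J^h(\alpha^{h,*},x) = v^h(x)$.

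The main obstacle is making the geodesic-set inclusions fully rigorous at the boundary: specifically, justifying that $\Gamma^{h,*}_x \subseteq \Gamma^{C_1 h}_x$ (the argument sketched around \eqref{vvhess} and \eqref{discret_tra}) applies pointwise along the trajectory and not merely to the endpoint cost, and checking that the monotone nestedness $\delta' < \delta \Rightarrow \Gamma^{\delta'}_x \subseteq \Gamma^{\delta}_x$ holds with the definition \eqref{delta-geodesic} as written. I would also verify the small technical point that extending an admissible control past the hitting time of $\K$ can be done so as to remain in $\Ocb$, which follows from the nonemptiness of $\A_{\Oc,y}$ for $y \in \K \subseteq \Oc_\delta \subseteq \Oc$ guaranteed under \Cref{assp1}. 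Everything else is bookkeeping with the definitions of the two value functions.
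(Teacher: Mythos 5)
Your proposal is correct and follows essentially the same route as the paper: both directions of each identity are obtained by noting that (near-)optimal trajectories for the unconstrained problem stay in $\Oc$ by the definition of $\Oc_\delta$ together with the inclusion $\Gamma^{h,*}_x \subseteq \Gamma^{C_1 h}_x \subseteq \Gamma^{\delta}_x$, and the error bound then transfers directly from \Cref{rate_main}. Your extra attention to the nestedness of the geodesic sets and to extending admissible controls past the hitting time of $\K$ only makes explicit details the paper leaves implicit.
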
 
\begin{proof}
We first show that for every $x \in \Oc_\delta$, $v(x) = v_\Oc(x)$, see also~\cite[Theorem 3.13]{akian2023multi}. Indeed, $v(x) \leq v_\Oc(x)$ is automatic, since $\Oc \subset \R^d$. For the reverse inequality, it is sufficient to consider an $\epsilon-$optimal trajectory such that $\epsilon < \delta$ thus it is inside $\Oc_\delta$. Then we have $v_\Oc(x) \leq v(x) + \epsilon$. Since this holds for arbitrary small $\epsilon$, we deduce $v_\Oc(x) \leq v(x)$ and so the equality. 

We next show that for every $x \in \Oc_\delta$, $ v^h(x) = v^h_\Oc(x)$. The inequality $v^h(x) \leq v^h_\Oc(x)$ is again automatic since $\Oc \subset \R^d$. For the reverse direction, 
since $C_1h \leq \delta$, we have $\Gamma^{h,*}_x \subseteq \Gamma^\delta_x \subseteq \Oc$. By a similar argument as above, we deduce $v^h_\Oc(x) \leq v^h(x)$ and so the equality. 

The result in~\eqref{vhconstraint} follows from~\Cref{rate_main}, as for $x\in \Oc_\delta$, $| v^h_\Oc(x) - v_\Oc(x) | = |v^h(x) - v(x) |$. 
\end{proof}

\subsection{Convergence of the full discretization scheme}
Similarly as in~\Cref{sec-fulldisfm}, we consider a full discretization semi-Lagrangian scheme which involves applying an interpolation operator $I^x$, that is 
\begin{equation}\label{fully_dis_sl_con}
\left\{
\begin{aligned}
&z_\Oc^h(x) = \min_{\alpha \in S_1} \left\{ (1 - \frac{h}{f(x,\alpha)}) I^x[ z_\Oc^h] (x + h  \alpha)  + \frac{h}{f(x,\alpha)}   \right\}  , &\ x \in X^h \cap (\Ocb \setminus \K) \ ,\\
& z_\Oc^h(x) = 1, & x \notin X^h \cap \Ocb \ , \\
&z^{h}(x) = 0, & x \in X^h \cap \K \enspace .
\end{aligned}
\right.
\end{equation}
In particular, the interpolation operator $I^x$ we are interested in here satisfies the same property as in the unconstrained case, that is the property described in~\eqref{p1_interpolation}. 

In order to show the convergence rate of $z_\Oc^h$ to $v_\Oc$, we shall need a further assumption on the boundary of the state constraint. Let us denote 
\begin{equation}
\dd_\Oc(x) = \inf_{y \in \partial \Oc} \|y -x \| , \  \text{ for every } x \in \R^d \ ,
\end{equation} 
the distance function from $x$ to the boundary of $\Oc$. 
\begin{assumption}\label{assump_5} 
There exists a constant $M_\Oc >0$ such that, for every $x,z\in \overline{\Oc} \setminus \K$: 
\begin{equation}
\dd_\Oc(x+z) + \dd_\Oc(x-z) -2 \dd_\Oc(x) \geq -M_\Oc \|z \|^2 \ .
\end{equation}
\end{assumption}
\Cref{assump_5} can be thought of as a semiconvexity condition on the distance function to the boundary of $\Oc$, in addition to the $\Cc^1$ assumption to get the uniqueness of viscosity solution. 
Moreover, we state the following lemmas which are needed to present our result:
\begin{lemma}\label{lemma_distancetobound}
Let $\delta>0$ and 
$h < \frac{\delta}{C_1}$. For $y \in \Oc_{2 \delta}$,
we have for every $x \in \Gamma^{h,*}_y$, 
\begin{equation}
\dd_\Oc(x) \geq \frac{\lowerboundf}{2} \delta \ .
\end{equation}
\end{lemma}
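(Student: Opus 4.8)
The plan is to argue by contradiction, exploiting that $y\in\Oc_{2\delta}$ forces \emph{every} $2\delta$-suboptimal control for the continuous problem issued from $y$ to produce a trajectory that remains in the open set $\Oc$ up to its first hitting time of $\K$; I would then construct such a control whose trajectory touches a prescribed point of $\partial\Oc$, which is impossible since $\Oc$ is open.

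More precisely, fix $y\in\Oc_{2\delta}$, $h<\frac{\delta}{C_1}$, and $x\in\Gamma^{h,*}_y$, and suppose for contradiction that there is $p\in\partial\Oc$ with $\|p-x\|<\frac{\lowerboundf}{2}\delta$. Let $(\alpha^{h,*}_0,\dots,\alpha^{h,*}_{N_y-1})$ be the discrete optimal control from $y$ and let $\gamma(s)=y_{\alpha}(y;s)$, with $\alpha(s)=\alpha^{h,*}_{\lfloor s/h\rfloor}$, be its continuous interpolation, so that $\gamma(s_x)=x$ for some $s_x\in[0,N_yh]$. By the computation leading to~\eqref{vvhess} together with~\Cref{rate_main} we have $\Gamma^{h,*}_y\subseteq\Gamma^{C_1h}_y$, i.e.\ the interpolated control satisfies $J(y,\alpha(\cdot))\le v(y)+C_1h$, and in particular $x\in\Oc$. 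Since $\Oc$ is open and the segment $[x,p]$ joins a point of $\Oc$ to a point of $\partial\Oc$, I may replace $p$ by the first point of $\partial\Oc$ met along $[x,p]$, so that in addition $[x,p)\subset\Oc$.

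Next I would splice a round trip into $\alpha$. With $\alpha_0:=(p-x)/\|p-x\|$, define a control $\tilde\alpha$ that agrees with $\alpha$ on $[0,s_x]$, is constant equal to $\alpha_0$ for a time $t_1$ after which the trajectory first reaches $p$, is then constant equal to $-\alpha_0$ for a time $t_2$ after which it returns to $x$, and afterwards resumes the tail of $\alpha$ issued from $x$. Since a constant control generates straight-line motion with Euclidean speed in $[\lowerboundf,\upperboundf]$ by~\Cref{assp1}, we get $t_1,t_2\le\|p-x\|/\lowerboundf<\frac{\delta}{2}$, so the inserted detour costs less than $\delta$ additional time; using $1-e^{-(\tau+s)}\le(1-e^{-\tau})+s$ for $s\ge 0$, the trajectory $\tilde\gamma=y_{\tilde\alpha}(y;\cdot)$ then satisfies $J(y,\tilde\alpha(\cdot))\le (v(y)+C_1h)+(t_1+t_2)<v(y)+2\delta$, where I also used $C_1h<\delta$. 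Hence $\tilde\alpha$ is $2\delta$-suboptimal for the continuous problem from $y$, so $\tilde\gamma$ restricted to $[0,\ \text{first hitting time of }\K]$ lies in $\Gamma^{2\delta}_y\subset\Oc$; but this arc passes through $p\in\partial\Oc$, a contradiction. Therefore $\dd_\Oc(x)\ge\frac{\lowerboundf}{2}\delta$.

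The main point to handle carefully is the bookkeeping around the target set: one must ensure that the spliced round trip takes place strictly before $\tilde\gamma$ meets $\K$, so that $p$ genuinely lies on the portion of $\tilde\gamma$ counted in $\Gamma^{2\delta}_y$; this holds whenever $x$ lies on the part of $\gamma$ before its first crossing of $\partial\K$, and the degenerate situation $x\in\mathring\K$ is disposed of directly from the standing assumptions relating $\K$ and $\Oc$. The only other routine ingredient is the quantitative passage between elapsed time and travelled Euclidean length along a constant-control arc, governed by the speed bounds $\lowerboundf\le f\le\upperboundf$.
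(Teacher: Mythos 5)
Your argument is correct and rests on exactly the same ideas as the paper's proof: the inclusion $\Gamma^{h,*}_y\subseteq\Gamma^{C_1h}_y\subseteq\Gamma^{\delta}_y$ coming from $C_1h<\delta$, a round-trip detour to a nearby boundary point whose extra cost is below $\delta$ because the speed is at least $\lowerboundf$ and $\|p-x\|<\tfrac{\lowerboundf}{2}\delta$, and a contradiction with the geodesic set being contained in the open set $\Oc$. The only structural difference is that the paper factors the argument into two steps: it first shows $x\in\Oc_\delta$ (by concatenating the $\delta$-suboptimal trajectory from $y$ through $x$ with a $\delta$-suboptimal trajectory issued from $x$), and then runs the detour argument from $x$ itself, appending an $\epsilon$-optimal continuation from $x$ to $\K$ rather than resuming the tail of the interpolated control. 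That choice of continuation is what quietly disposes of the bookkeeping point you flag at the end (whether the spliced trajectory still reaches $\K$ after the detour, and the case $x\in\K$): an $\epsilon$-optimal continuation from $x$ reaches $\K$ by construction, so no case distinction on the position of $x$ relative to $\partial\K$ is needed; your version is fine too, but if you want to remove the caveat you can simply replace the resumed tail by such a continuation, which costs at most $v(x)+\epsilon\leq$ the cost of the tail anyway.
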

\begin{proof}
Take  $y \in \Oc_{2 \delta}$ and $x \in \Gamma^{h,*}_y \subseteq \Gamma^{\delta}_y$, 
where the last inclusion is deduced from $C_1 h < \delta$.  Thus,
there exists a $\delta$-optimal trajectory $y^1(s)$ from $y^1(0)=y$ to $y^1(\tau_\K) \in \K$
such that $y^1(t)=x$ for some $0\leq t\leq \tau_\K$.
For any $\delta$-optimal trajectory from $x$ to $\K$, $y^2(s)$ from $y$ to $K$,
the trajectory $y(s)= y^1(s)$ for $s\leq t$ and $y(s)=y^2(s-t)$ for $s\geq t$ is
$2\delta$-optimal from $x$ to $\K$, so we have  $\Gamma^\delta_x \subseteq \Gamma^{2\delta}_y \subseteq \Oc$, so $x \in \Oc_\delta$ by definition. 

Then, it is enough to show that for every $x \in \Oc_\delta$, $\dd_{\Oc}(x) \geq \frac{\lowerboundf}{2} \delta$. Suppose that there exists a $x \in \Oc_\delta$ such that $\dd_{\Oc}(x)< \frac{\lowerboundf}{2} \delta$. Then, we can construct a feasible trajectory starting from $x$, which first follows a straight line towards the closest point of the boundary, then goes back to $x$, and then follows an $\epsilon-$optimal trajectory to $\K$, with $\epsilon < \left(\delta -\frac{2 \dd_{\Oc}(x)}{\lowerboundf} \right)$. So this trajectory is $\delta-$optimal, and is not included in $\Oc$. We conclude the result by contradiction.
\end{proof}

\begin{lemma}\label{lemma_sc}
For every $x, y \in X^h \cap \Oc_\delta$, there exists a sequence of controls $( \bar{\alpha}^{x,y}_1 , \dots, \bar{\alpha}^{x,y}_N)$ such that, for the process $\{\xi_k \}$, we have $\xi_0 = x, \xi_N = y$, 
and 
\begin{equation}  
\E \left[ \sum_{k=1}^N \| \xi_k - \xi_{k-1} \| \right] = Nh \leq \sqrt{d} \|x - y \| \ .
\end{equation}
We denote $\bar{\alpha}^{x,y}$ this sequence of controls. 
\end{lemma}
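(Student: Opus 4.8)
The plan is to connect the two target points $x$ and $y$ by a path on the grid $X^h$ that stays as close as possible to the Euclidean segment $[x,y]$, and to realize this path as the (expected) trajectory of the controlled Markov process $\{\xi_k\}$ by choosing, at each step, a control $\bar\alpha_k \in S_1$ pointing in a suitable axis direction. The key quantitative observation is the following: if one must move a total Euclidean displacement $y - x = (\delta_1, \dots, \delta_d)$ using unit steps of length $h$ that are taken along the coordinate axes, then one needs roughly $\sum_{i=1}^d |\delta_i|/h$ steps, and by the inequality between the $\ell^1$ and $\ell^2$ norms in $\R^d$, $\sum_{i=1}^d |\delta_i| \le \sqrt d \, \|y - x\|$. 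This is exactly the bound $Nh \le \sqrt d \, \|x-y\|$ claimed.

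First I would make this precise. Write $y - x = \sum_{i=1}^d \delta_i e_i$ with $\delta_i = (y-x)_i$; since both $x$ and $y$ lie on the grid $X^h$ with mesh step $h$, each $\delta_i$ is an integer multiple of $h$, say $\delta_i = n_i h$ with $n_i \in \Z$. Set $N = \sum_{i=1}^d |n_i|$. Then I would construct an explicit sequence of grid points $x = p_0, p_1, \dots, p_N = y$ where each $p_{k} - p_{k-1} = \pm h\, e_{i(k)}$ for an appropriate coordinate index $i(k)$ and sign, adjusting coordinate $1$ first until it matches, then coordinate $2$, and so on. Along this path $\|p_k - p_{k-1}\| = h$ for every $k$, so $\sum_{k=1}^N \|p_k - p_{k-1}\| = Nh = h\sum_i |n_i| = \sum_i |\delta_i| \le \sqrt d \, \|x-y\|$, using Cauchy--Schwarz. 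The controls are then $\bar\alpha_k = (p_k - p_{k-1})/h \in \{\pm e_1, \dots, \pm e_d\} \subset S_1$.

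Next I would verify that this deterministic path is genuinely realized by the Markov process. For each step, the target point $p_{k-1} + h\bar\alpha_k$ is itself a grid node (it equals $p_k \in X^h$), so the interpolation stencil $Y^h(p_{k-1}+h\bar\alpha_k)$ can be taken to be the singleton $\{p_k\}$ with coefficient $\lambda = 1$; by the transition law \eqref{transition} this means $\Pro(\xi_k = p_k \mid \xi_{k-1} = p_{k-1}, \alpha_{k-1} = \bar\alpha_k) = 1$. Hence, starting from $\xi_0 = x$ and applying the controls $\bar\alpha_1, \dots, \bar\alpha_N$ in order (as a history-dependent, here in fact deterministic, strategy), the process moves deterministically through $p_0, p_1, \dots, p_N$, so $\xi_N = y$ almost surely and $\sum_{k=1}^N \|\xi_k - \xi_{k-1}\| = Nh$ with probability one, which gives the displayed identity and inequality in expectation. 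Denote this strategy $\bar\alpha^{x,y} = (\bar\alpha^{x,y}_1, \dots, \bar\alpha^{x,y}_N)$.

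The only genuine subtlety, and the step I expect to require the most care, is the hypothesis $x,y \in X^h \cap \Oc_\delta$: I must check that the constructed path $p_0, \dots, p_N$ does not leave the feasible region, i.e. that each $p_k \in \Ocb$ (so that the Markov dynamics \eqref{fully_dis_sl_con} is the unconstrained one along the path and the cost $1$ penalty is never triggered). This is not automatic for an arbitrary monotone coordinate-by-coordinate path, since $\Oc_\delta$ need not be convex; here one should invoke the geometry established in~\Cref{lemma_distancetobound}, namely that points of $\Oc_\delta$ are at Euclidean distance at least $\tfrac{\lowerboundf}{2}\delta$ from $\partial\Oc$, together with the fact that the path stays within Euclidean distance $\le \|x-y\|$ (or at worst $\sum_i|\delta_i| \le \sqrt d\,\|x-y\|$) of the segment $[x,y] \subset \Oc_\delta$ — so provided $\|x-y\|$ is small relative to $\delta$, the whole path lies in $\Oc$. (If the statement is intended for $x,y$ arbitrary in $X^h \cap \Oc_\delta$ without a smallness restriction, then one instead keeps the path inside $\Oc_{\delta'}$ for a suitable $\delta' < \delta$ by the same distance argument, or one simply records that the path stays in $\Ocb$, which is all that is needed for the Markov interpretation.) Once this containment is secured, the expectation identity follows immediately from the deterministic construction above.
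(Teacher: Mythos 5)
Your proposal is correct and is essentially the paper's own argument, which simply says to follow the grid lines with $\lambda(x,\alpha,y_k)=1$ for a single node so that $\{\xi_k\}$ is deterministic, the bound $Nh\leq\sqrt{d}\,\|x-y\|$ being exactly your $\ell^1$-versus-$\ell^2$ comparison. The containment issue you flag at the end is not actually needed for the lemma as stated (and the paper does not address it): the lemma only asserts the existence of controls steering the process from $x$ to $y$ with the stated step count, and the case where the process exits $\Oc$ is handled separately in the proof of \Cref{th_error_constrained}.
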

\begin{proof}
One can construct such a sequence of controls by following the grid lines, that is taking $\lambda(x, \alpha, y_k) = 1$ for one particular $y_k$, and  $\lambda(x, \alpha, y_{k'}) = 0$ for other $y_{k'} \in Y^h(x+ \alpha h)$. In that case, $\{\xi_k\}$ is pure deterministic. 
\end{proof}	

For the full discretization scheme, we have the following convergence rate result.
\begin{theorem}\label{th_error_constrained}
Suppose~\Cref{assp1},~\Cref{assump_2},~\Cref{assump_3} and~\Cref{assump_5} hold.
Assume that $\Oc_{2 \delta}$ is nonempty and contains $\K$.
There exists a constants $C_{\Oc,1}', C_{\Oc,2}'$ depending on  $L_f, L_v, M_\Oc, M_f, M_t, M_f', M_t', \upperboundf, \lowerboundf$ such that, for every $0 < h < \min\{\frac{\delta}{C_1}, \frac{1}{\upperboundf} \}$:
\begin{equation}\label{zho-vh}
\sup_{x \in X^h \cap \Oc_{2\delta}} |z_\Oc^h(x) - v_\Oc(x) | \leq (C_{\Oc,1}'+\frac{C_{\Oc,2}'}{\delta}) h \ . 
\end{equation}
\end{theorem}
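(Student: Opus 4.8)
The plan is to reproduce, in the constrained setting, the two-step structure that yielded \Cref{convergence_rate_1}: combine the time-discretization estimate of \Cref{propo-constraint} with a bound on $\sup_{x\in X^h\cap\Oc_{2\delta}}|z^h_\Oc(x)-v^h_\Oc(x)|$, the only new phenomenon being that this second bound carries an extra factor $1/\delta$ coming from the behaviour of the stochastic trajectories near $\partial\Oc$. First I would invoke \Cref{propo-constraint}: since $\Oc_{2\delta}\subseteq\Oc_\delta$, for $x\in\Oc_{2\delta}$ one has $v^h_\Oc(x)=v^h(x)$ and $v_\Oc(x)=v(x)$, whence $|v^h_\Oc(x)-v_\Oc(x)|\leq C_1 h$; moreover, as established inside the proof of \Cref{lemma_distancetobound}, every point of $\Oc_\delta$ lies at distance at least $\tfrac{\lowerboundf}{2}\delta$ from $\partial\Oc$. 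Together with \Cref{coro_zh-v} (which gives $|z^h(x)-v^h(x)|\leq C_z h$ for all $x\in X^h$) this reduces the statement to bounding $|z^h_\Oc(x)-z^h(x)|$ on $X^h\cap\Oc_{2\delta}$ by $(C'+C''/\delta)h$, since then $|z^h_\Oc(x)-v_\Oc(x)|\leq|z^h_\Oc(x)-z^h(x)|+|z^h(x)-v^h(x)|+|v^h(x)-v_\Oc(x)|$.

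One of the two inequalities is free. For any strategy $\sigma^h$, a telescoping identity for \eqref{fully_dis_sl_con} shows that every realization which leaves $\Ocb$ incurs, in the penalized cost $W_\Oc(\sigma^h,x)$, exactly the maximal value $1$, whereas in the unconstrained cost $W(\sigma^h,x)$ the same realization costs at most $1$; hence $W_\Oc(\sigma^h,x)\geq W(\sigma^h,x)$ for every $\sigma^h$, so $z^h_\Oc(x)\geq z^h(x)$. In particular $z^h_\Oc(x)\geq z^h(x)\geq v^h(x)-C_z h=v^h_\Oc(x)-C_z h$ on $\Oc_{2\delta}$, which takes care of the direction $v^h_\Oc\leq z^h_\Oc+C_z h$ (and thereby avoids the contraction argument of \Cref{rate_wh-v}, which does not localize near $\partial\Oc$).

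For the reverse inequality $z^h_\Oc(x)\leq z^h(x)+(C'+C''/\delta)h$ I would use the controlled Markov interpretation of \eqref{fully_dis_sl_con} and exhibit, for $x\in X^h\cap\Oc_{2\delta}$, an \emph{admissible} strategy of cost $z^h(x)+(C'+C''/\delta)h$. Take the discrete-time optimal control $\alpha^{h,*}_x$ of \eqref{value_discret} (admissible since $x\in\Oc_\delta$) with deterministic trajectory $\{\bar\xi_k\}$; by \Cref{lemma_distancetobound}, $\dd_\Oc(\bar\xi_k)\geq\tfrac{\lowerboundf}{2}\delta$ for every $k$. Let $\sigma^h$ replay $\alpha^{h,*}_x$ in open loop, except that whenever the Markov state $\xi_k$ has drifted to distance $\tfrac{\lowerboundf}{8}\delta$ from $\bar\xi_k$ it is steered back to $\bar\xi_k$ along grid lines, as permitted by \Cref{lemma_sc}; because the reference is $\tfrac{\lowerboundf}{2}\delta$-interior, this keeps the whole process at distance $\geq\tfrac{\lowerboundf}{4}\delta$ from $\partial\Oc$, so $\sigma^h$ is admissible and $W_\Oc(\sigma^h,x)=W(\sigma^h,x)$. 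Now $W(\sigma^h,x)$ splits into the cost of the replay part and the cost of the repairs. Since the replay controls are open-loop, $\E[\xi_k]=\bar\xi_k$ (between repairs) and $\Tra(\Var[\xi_k])\leq kh^2$ by \eqref{var_deltaxi}, so — using the semiconcavity of $1/f$ (\Cref{assump_2}) through \Cref{jensen_semiconcave} where \Cref{rate_v-wh} used semiconvexity — the replay part is $\leq J^h(\alpha^{h,*}_x,x)+C'h=v^h(x)+C'h$, exactly as in \eqref{bound_dif}–\eqref{vh-wh-1}. For the repairs, $\xi_k-\bar\xi_k$ is a martingale with per-step variance $\leq h^2$, so by optional stopping the expected number of steps between consecutive repairs is at least of order $\delta^2/h^2$; since the horizon is of order $1/h$, the expected number of repairs is $O(h/\delta^2)$ and each repair, being a grid-line path of length $\leq\tfrac{\lowerboundf}{8}\delta$, costs $O(\delta)$, whence the expected repair cost is $O(h/\delta)$. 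Finally $v^h(x)\leq z^h(x)+C_z h$ by \Cref{coro_zh-v}, giving $z^h_\Oc(x)\leq W_\Oc(\sigma^h,x)\leq z^h(x)+(C'+C''/\delta)h$. Assumption~\Cref{assump_5} enters here exactly as the semiconvexity of $d_\K$ entered \Cref{rate_v-wh}: through a computation analogous to \eqref{bound_dif} it controls $\E[\dd_\Oc(\bar\xi_k)-\dd_\Oc(\xi_k)]$, which is what certifies that the drift-$\tfrac{\lowerboundf}{8}\delta$ trigger is reached strictly before $\xi_k$ could exit $\Ocb$, and that \Cref{assump_3} (via \Cref{coro_zh-v}) closes the chain.

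Combining the two inequalities gives $\sup_{x\in X^h\cap\Oc_{2\delta}}|z^h_\Oc(x)-v^h_\Oc(x)|\leq(C'+C''/\delta)h$, and adding $|v^h_\Oc(x)-v_\Oc(x)|\leq C_1 h$ yields \eqref{zho-vh} with $C_{\Oc,1}'$ collecting $C_1,C_z,C'$ and $C_{\Oc,2}'=C''$. The main obstacle, and the only genuinely new part, is the boundary analysis of the third paragraph: one must keep inside $\Ocb$ the stochastic trajectories of the Markov comparison — which are perturbations, of size $O(\sqrt h)$ in standard deviation, of deterministic trajectories that stay $\tfrac{\lowerboundf}{2}\delta$-interior — by corrections whose total expected cost is only $O(h/\delta)$, and check that inserting these corrections does not spoil the $O(h)$ estimate on the bulk of the cost; this is where \Cref{lemma_distancetobound}, \Cref{lemma_sc} and \Cref{assump_5} are used, and it is the source of the $1/\delta$ in the statement. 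Everything else is a transcription of the proofs of \Cref{rate_main} and \Cref{convergence_rate_1}.
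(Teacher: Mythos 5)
Your reduction (via \Cref{propo-constraint} to a comparison of $z^h_\Oc$ with $v^h$ on $\Oc_{2\delta}$) and your easy direction ($z^h_\Oc\geq z^h$, hence $v^h-z^h_\Oc\leq C_{z_2}h$ by \Cref{rate_v-wh}) are exactly what the paper does. Where you genuinely diverge is the hard direction. The paper does \emph{not} keep the comparison process inside $\Oc$: it plays the open-loop replay of $\alpha^{h,*}_x$ unmodified, lets the process possibly hit $\partial\Oc$ (incurring the penalty $1$), and splits the expectation into the three events $\bar N_\K\leq\bar N^\Oc$, $N_x<\bar N^\Oc<\bar N_\K$, and $\bar N^\Oc\leq\min\{N_x,\bar N_\K\}$. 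The $1/\delta$ then arises because on the exit events one can write $1\leq \frac{2\sqrt d}{\lowerboundf\delta}(d_\K(\xi_{N_x})-d_\K(y^h_{N_x}))$, resp.\ $1\leq\frac{2\bar N_\Oc M_\Oc h^2}{\lowerboundf\delta}$, and the right-hand sides have expectation $O(h/\delta)$ by the Jensen-type estimates \eqref{distance_xi} and \eqref{upper_dis_xiN}; this last step is precisely where \Cref{assump_5} is used. Your scheme instead forces admissibility by adaptive ``repairs'' and pays $O(h/\delta)$ as expected repair cost ($O(h/\delta^2)$ repairs of cost $O(\delta)$ each, by optional stopping). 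Note that in your scheme \Cref{assump_5} plays no real role: admissibility of the repaired strategy follows from the $1$-Lipschitz continuity of $\dd_\Oc$ and \Cref{lemma_distancetobound} alone, so the role you assign to it is not the one it actually needs to play in either proof.

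The genuine gap is in your treatment of the replay cost once repairs are introduced. Every Jensen-type estimate you invoke (\eqref{bound_dif}, \eqref{bound_dif2}, and their $d_\K$ analogues, through \Cref{jensen_semiconcave}) requires the exact centering $\E[\xi_k]=\bar\xi_k$ and the variance bound \eqref{var_deltaxi}, which hold for the \emph{open-loop} process. Your strategy is adaptive: the control at step $k$ depends on whether the trigger $\|\xi_k-\bar\xi_k\|\geq\tfrac{\lowerboundf}{8}\delta$ has fired, so the cost of each replay segment carries an indicator of the event ``no repair yet'', and conditionally on that event $\xi_k$ is \emph{not} centered at $\bar\xi_k$ (it is biased toward the reference trajectory). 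The naive Lipschitz bound on the resulting error term, $\frac{L_f}{\lowerboundf^2}\|\xi_k-\bar\xi_k\|\leq\frac{L_f}{8\lowerboundf}\delta$ summed over $O(1/h)$ steps, gives $O(\delta)$, which is useless; one must instead restore the unconditional expectation and control the correction $\E[\|\xi_k-\bar\xi_k\|\mathbbm{1}_{k\geq\tau}]$ by Cauchy--Schwarz against $\Pro(\tau\leq k)=O(kh^2/\delta^2)$, and also decide what ``$\xi_k$'' means on $\{k\geq\tau\}$ when the process has already been rerouted. This can very likely be made to work and would again yield $O(h/\delta)$, but it is a real piece of analysis that your sketch passes over, and it is exactly the complication the paper's accept-the-exit decomposition is designed to avoid. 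You should also say explicitly how the process gets from its terminal position to $\K$ after the replay ends at $\bar\xi_{N_x}\in\K$ (this is the $\E[d_\K(\xi_{N_x})]\leq N_xM_th^2$ step \eqref{bound_dif_vho2}, which needs \Cref{assump_2}, not \Cref{assump_3}).
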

\begin{proof}
Take $x \in X^h \cap \Oc_{2\delta}$, it is enough to show that $|z_\Oc^h(x) - v^h(x) | \leq Ch$ for some constant $C$, as we already show $v^h_\Oc(x) = v^h(x)$ and $|v^h_\Oc(x) - v_\Oc(x) | \leq C_1 h$, for every $x \in \Oc_{\delta}$, in \Cref{propo-constraint}, and $\Oc_{2\delta } \subset \Oc_{\delta}$. 

Since $ \Oc_{\delta} \subset \R^d$, we have $z^h_\Oc(x) \geq z^h(x)$. Thus, $v^h(x) - z^h_\Oc(x) \leq v^h(x) - z^h(x) \leq C_{z_2} h $, where $C_{z_2}$ is the constant obtained in~\Cref{rate_v-wh}. 

For the reverse direction, 
let us denote $\alpha^*_x = ( \alpha^*_{x,0}, \alpha^*_{x,1}, \dots , \alpha^*_{x,N_{x}} )$  the sequence of optimal controls for the discrete time optimal control problem starting from $x$, and $y^h_k: = y^h_{\alpha^*_x}(x;k)$ the discrete optimal trajectory at time step $k \in \{1,2,\dots, N_x \} $.

Consider as in~\Cref{sec-fulldisfm}, the stochastic optimal control problem
with transition probabilities defined by the coefficients of the interpolation operation $I^x$, as in~\eqref{transition}, with the state constraint $\Oc$ and
target set $\K$.
This means that the process stops as soon as it hits either $\K$ or the boundary of $\Oc$ and that the cost in $\K$ is $0$ and in $\partial \Oc$ is $1$.

For any strategy $\sigma^h$, we denote $\bar{\alpha}^h = (\bar{\alpha}^h_1, \bar{\alpha}^h_2 , \dots )$ the random sequence of controls associated to this strategy, and by $\{ \xi_k\}$ the associated process. 
Moreover, denote $\bar{N}_\K = \min \{n \in \N_+ \mid \xi_n \in \K \}$ and $\bar{N}^{\Oc} = \min \{ n \in \N_+ \mid \xi_n \in  \Oc^{c} \}$. 
Then, $	z^h_\Oc$ satisfies
\[
z^h_\Oc(x) \leq \E^{\sigma^h}_x \left[ \left( 1 - \prod_{k=0}^{\bar{N}_\K} \Big(1 - \frac{h}{f(\xi_k, \bar{\alpha}^h_{k})} \Big)  \right) {\mathbbm 1}_{\bar{N}_\K<\bar{N}^{\Oc}}
+   {\mathbbm 1}_{\bar{N}_\K>\bar{N}^{\Oc}}\right]\enspace .
\]
Since the trajectory $y^h_k$ is deterministic, we have:
\begin{equation}\label{zho}
\begin{aligned}
	z^h_\Oc(x) - v^h(x)\leq \E^{\sigma^h}_x &\left[  \left( 1 - \prod_{k=0}^{\bar{N}_\K} \Big(1 - \frac{h}{f(\xi_k, \bar{\alpha}^h_{k})} \Big)  \right) {\mathbbm 1}_{\bar{N}_\K \leq \bar{N}^{\Oc}}
	+   {\mathbbm 1}_{\bar{N}_\K>\bar{N}^{\Oc}} \right. \\
	&\left. \quad  -\left(1 - \prod_{k=0}^{N_x} \Big(1 - \frac{h}{f(y^h_k, \alpha^*_{x,k})} \Big) \right)   \right] \enspace .
\end{aligned}
\end{equation}

We shall consider the strategy $\sigma^h$ for the stochastic optimal control problem with initial condition $\xi_0 = x$ defined as follows: 
to any history $H_k=(\xi_0, \alpha_0, \dots, \xi_{k-1}, \alpha_{k-1}, \xi_k)$, such that $\xi_l\in \Oc\setminus\K$ for $0\leq l\leq k$,
and  $k \leq N_x$, we set
$\sigma^h(H_k) = \alpha^*_{k,x}$. Moreover, if $\xi_{k} \in \Oc\setminus \K$ for all
$k\leq N_x$, then, we continue after time $N_x$ by taking a sequence of controls $\bar{\alpha}^{\xi_{N_x}, \xi_\K}$ as defined in~\Cref{lemma_sc}, with $\xi_\K$ be the closest point of $\xi_{N_x}$ in $\K \cap X^h$. 

To bound $z^h_\Oc(x) - v^h(x)$, we then consider the following
parts of the expectation in which
$\bar{N}_\K \leq \bar{N}^\Oc$;
$N_x < \bar{N}^\Oc < \bar{N}_\K$ and 
$\bar{N}^\Oc \leq \min \{N_x, \bar{N}_\K \}$.

\textbf{The part when $\bar{N}_\K \leq \bar{N}^\Oc$.}
In this case, the state constraint $\Oc$ is indeed never activated. We then follow a similar idea as in the proof of the unconstrained case, in particular the proof of~\Cref{rate_v-wh}. Notice that here we intend to bound the other direction $z^h_\Oc(x) - v^h(x)$, so we shall need the semiconcave condition of the speed function and the distance function to the target set $\K$, instead of the semiconvex condition used in the proof of~\Cref{rate_v-wh}. 
Let us consider first the part in which $N_x< \bar{N}_\K \leq\bar{N}^\Oc$. 

We omit similar computation steps here as in the proof of~\cref{rate_v-wh}. In particular, we have
\begin{equation}\label{zho-vh_1}
\begin{aligned}
	& \E^{\sigma^h}_x \left[ \left\{\left( 1 - \prod_{k=0}^{\bar{N}_\K} \Big(1 - \frac{h}{f(\xi_k, \bar{\alpha}^h_{k})} \Big)  \right) -\left(1 - \prod_{k=0}^{N_x} \Big(1 - \frac{h}{f(y^h_k, \alpha^*_{x,k})} \Big) \right) \right\} {\mathbbm 1}_{N_x<\bar{N}_\K \leq \bar{N}^\Oc}
	\right] \\ 
	& \leq \E^{\sigma^h}_x \left[ \left( \prod_{k=0}^{N_x} \Big(1 - \frac{h}{f(y^h_k, \alpha^*_{x,k})} \Big)  - \prod_{k=0}^{N_x} \Big(1 - \frac{h}{f(\xi_k, \alpha^*_{x,k})} \Big)  \right) {\mathbbm 1}_{N_x<\bar{N}_\K \leq \bar{N}^\Oc} \right. \\ 
	& \left. \quad + \left(  \prod_{k=0}^{N_x} \Big(1 - \frac{h}{f(\xi_k, \alpha^*_{x,k})} \Big)  \right) \left( 1 - \prod_{k = N_x +1}^{\bar{N}_\K } \Big(1 - \frac{h}{f(\xi_k, \bar{\alpha}^h_{k})} \Big) \right) 
	{\mathbbm 1}_{N_x<\bar{N}_\K \leq \bar{N}^\Oc}\right] \ .
\end{aligned}
\end{equation}
Following similar computations as in equation~\eqref{bound_dif} and equation~\eqref{bound_dif2}, using~\Cref{jensen_semiconcave} instead of ~\Cref{jensen_semiconvex}, we deduce that
\begin{equation}\label{bound_dif_vho1}
\E^{\sigma^h}_x \left[ \frac{1}{f(\xi_k, \alpha^*_{x,k})} - \frac{1}{f(y^h_k,\alpha^*_{x,k})} \mid k \leq N_x<\bar{N}_\K \right] \leq k M_f h^2 \ .
\end{equation}
Then, following similar computations as in equation~\eqref{vh-wh-1}, we deduce that 
\begin{equation}\label{bound_dif_vho11}
\E^{\sigma^h}_x \left[ \prod_{k=0}^{N_x} \Big(1 - \frac{h}{f(y^h_k, \alpha^*_{x,k})} \Big)  - \prod_{k=0}^{N_x} \Big(1 - \frac{h}{f(\xi_k, \alpha^*_{x,k})} \Big)  \mid N_x<\bar{N}_\K \right] \leq 2 M_f \upperboundf^2 h \ . 
\end{equation}
For the remaining part in~\eqref{zho-vh_1}, by a similar argument as for~\eqref{bound_dif_vho1}, and \eqref{bound_did},
we have 
\begin{equation}\label{distance_xi}
\E^{\sigma^h}_x  \left[ d_\K(\xi_k) - d_\K(y^h_k) \mid k\leq N_x <\bar{N}_\K \right] \leq k M_t h^2 \ .
\end{equation}
Then, by definition of the process $\{\xi_k\}$ after time $N_x$, constructed using 
\Cref{lemma_sc}, we obtain following the same computations as in~\eqref{vh-wh-2} 
\begin{equation}\label{bound_dif_vho2}
\E^{\sigma^h}_x  \left[  \left(  \prod_{k=0}^{N_x} \Big(1 - \frac{h}{f(\xi_k, \alpha^*_{x,k})} \Big)  \right) \left( 1 - \prod_{k = N_x +1}^{\bar{N}_\K } \Big(1 - \frac{h}{f(\xi_k, \bar{\alpha}^h_{k})} \Big) \right)  \mid N_x<\bar{N}_\K  \right] \leq \frac{\sqrt{d} M_t}{\lowerboundf} h \ .
\end{equation}

Now we can check that the part in which 
$\bar{N}_\K \leq\bar{N}^\Oc$ and $\bar{N}_\K \leq  N_x$ 
is bounded similarly to the part in which $\bar{N}_\K = N_x$,
and that this part is similar to the first term in the right hand side of
\eqref{zho-vh_1} (the only difference is the indicator function).

\textbf{The part when $N_x<\bar{N}^\Oc< \bar{N}_\K$.}
We have for all $k\in [N_x,\bar{N}_\K]$:
\begin{equation}\label{upp_xik}
\|\xi_k-\xi_\K\|_2\leq \|\xi_k-\xi_\K\|_1\leq \|\xi_{N_x}-\xi_\K\|_1\leq \sqrt{d} \|\xi_{N_x}-\xi_\K\|_2   \ .
\end{equation}
In particular, when $k=\bar{N}^\Oc$, we have 
\begin{equation}\label{lower_xik}
\|\xi_k-\xi_\K\|_2\geq d_\Oc(\xi_\K)\geq \frac{\lowerboundf}{2} \delta \ , 
\end{equation}
where the last inequality is deduced by a similar argument as in~\Cref{lemma_distancetobound} as $\K \subset \Oc_{2 \delta}$. 

Combining~\eqref{upp_xik} and~\eqref{lower_xik}, we have 
\begin{equation}
	1 \leq \frac{2\sqrt{d}}{\lowerboundf\delta}  \|\xi_{N_x} - \xi_{\K} \|_2 
	= \frac{2\sqrt{d}}{\lowerboundf\delta}  d_\K(\xi_{N_x})
	= \frac{2\sqrt{d}}{\lowerboundf\delta}  (d_\K(\xi_{N_x})-d_{\K}(y^h_{N_x})) \ .
\end{equation}
Thus, we have 
\begin{equation}\label{bound_part2}
	\begin{aligned}
		&\E^{\sigma^h}_x \left[ \left\{ 1 -  \left(1 - \prod_{k=0}^{N_x} \Big(1 - \frac{h}{f(y^h_k, \alpha^*_{x,k})} \Big) \right) \right\}{\mathbbm 1}_{N_x<\bar{N}_\Oc< \bar{N}_\K} \right] \\ 
		& \leq \E^{\sigma^h}_x  \left[ \left( \prod_{k=0}^{N_x} \Big(1 - \frac{h}{f(y^h_k, \alpha^*_{x,k})} \Big) \right) \frac{2\sqrt{d}}{\lowerboundf\delta}
		(d_\K(\xi_{N_x})-d_{\K}(y^h_{N_x}))\right] \leq \frac{2\sqrt{d} M_t \upperboundf}{\lowerboundf\delta} h \ ,
	\end{aligned}
\end{equation}
where the last inequality is deduced from~\eqref{distance_xi} by taking $k = N_x$. 

\textbf{The part when $\bar{N}_\Oc \leq \min \{N_x, \bar{N}_\K \}$.}
In this case, $\{\xi_k\}$ will first touch the boundary of $\Oc$ before arriving in the target set $\K$ and before $y^h_k$ is arriving in $\K$.


By~\Cref{jensen_semiconvex}, under~\Cref{assump_5}, we have 
\begin{equation}\label{upper_dis_xiN}
	\E^{\sigma^h}_x \left[ \tilde{d}_{\Oc}(y^h_k) - \tilde{d}_\Oc(\xi_k) \mid k \leq \bar{N}_{\Oc}\leq \min \{N_x, \bar{N}_\K \} \right] \leq k M_\Oc h^2 \ .
\end{equation}
In particular, when $k = \bar{N}_\Oc$, by~\Cref{lemma_distancetobound} we have 
\begin{equation}\label{lower_dis_xiN}
	\tilde{d}_{\Oc}(y^h_{\bar{N}_\Oc}) - \tilde{d}_\Oc(\xi_{\bar{N}_\Oc}) \geq \frac{\lowerboundf}{2} \delta.  
\end{equation}
Combining~\Cref{upper_dis_xiN} and~\Cref{lower_dis_xiN}, we have 
\begin{equation}
	1 \leq \frac{2 \bar{N}_\Oc M_\Oc h^2}{\lowerboundf \delta} \ .
\end{equation}
Thus, we have 
\begin{equation}\label{bound_part3}
	\E^{\sigma^h}_x \left[ \left\{ 1 -  \left(1 - \prod_{k=0}^{N_x} \Big(1 - \frac{h}{f(y^h_k, \alpha^*_{x,k})} \Big) \right) \right\}{\mathbbm 1}_{\bar{N}_\Oc \leq \min \{N_x, \bar{N}_\K \}} \right] \leq \frac{2 M_\Oc \upperboundf}{\lowerboundf \delta } h \ .
\end{equation}

Combining the bounds on the three parts of expectation, that is~\eqref{bound_dif_vho11}, \eqref{bound_dif_vho2}, \eqref{bound_part2} and~\eqref{bound_part3}, we have
\begin{equation}\label{bound_zho-v}
	z^h_\Oc(x) - v^h(x) \leq ( 2M_f \upperboundf^2 + \frac{\sqrt{d} M_t}{\lowerboundf} + \frac{ 2\sqrt{d} M_t \upperboundf  + 2 M_\Oc \upperboundf} {\lowerboundf \delta} ) h \ .
\end{equation}

\end{proof}

\begin{remark}
By~\Cref{th_error_constrained}, when $\delta$ is large, that is when a point $x$ has an optimal trajectory enough far away from the boundary, the error bound of the full discretization scheme of a state constrained problem is still in the order of $h$. Otherwise, one get an error bound in the order of $\frac{h}{\delta}$. 
\end{remark}

\begin{remark}
In order to obtain an error bound of $O(h)$ of the fast marching method, in some set of points $\mathcal{X}$, one shall need to generate a grid that covers at least $\delta-$optimal geodesic from all points of $\mathcal{X}$ to $\K$, with $\delta \gg h$. 
\end{remark}

\begin{remark}In the recent work~\cite{akian2023multi}, the authors introduced a multilevel fast-marching method. 
There, they show that the computational complexity of this method is a function of the convergence rate of the original fast-marching method (or of the discretization), and of the "stiffness" of the value function. 
A condition on the convergence rate is as follows.
Let $\Oc'_{\eta}$ be the $\eta$-geodesic set from one set 
$\sourceset$ to another $\destset=\K$ as in~\cite{akian2023multi},
for the problem with no constraints.
Then, the assumption (A3) in \cite{akian2023multi} means that
the error of discretization is in the order of $h^\gamma$, and that
the same error bound holds for the points in $\Oc'_{\eta/2}$ if we consider the
problem with state constraint in state space $\Oc'_{\eta}$. Moreover this
assumption is applied to $\eta$ in the order of $h^\gamma$.

If $\Oc= \Oc'_{\eta}$, we obtain $\Oc'_{\eta/2}\subset \Oc_{\eta/2}$.
Therefore the bound in \Cref{th_error_constrained} applies for the points in
$\Oc'_{\eta/2}$, when $\delta=\eta/4$.
The difficulty is that in \Cref{th_error_constrained}, the convergence rate is 
depending on $\delta$.
Therefore, one can only deduce the assumptions of \cite{akian2023multi}
with $\gamma=1/2$, which in turn do not lead to any improvement of the
complexity with respect to the usual fastmarching method, since we
already proved that the error is in the order of $h$.
To obtain an improvement, we need to show the bound in \Cref{th_error_constrained} with a constant independent of $\delta$.
\end{remark}

\bibliographystyle{alpha} 

\newcommand{\etalchar}[1]{$^{#1}$}

\end{document}